\title{Random homomorphisms into the orthogonality graph}
\author{D\'avid Kunszenti-Kov\'acs\footnote{Research supported by ERC
Consolidator Grant 648017}, L\'aszl\'o Lov\'asz\footnote{Research supported by
ERC Synergy Grant No.~810115.} ~and Bal\'azs Szegedy\footnote{Research was
partially supported by the NKFIH "\'Elvonal" KKP 133921 grant.}\\
Alfr\'ed R\'enyi Institute of Mathematics\\
Budapest, Hungary}
\date{\today}
\long\def\ignore#1{}
\begin{document}

\newtheorem{theorem}{Theorem}
\newtheorem{prop}[theorem]{Proposition}
\newtheorem{lemma}[theorem]{Lemma}
\newtheorem{claim}{Claim}
\newtheorem{corollary}[theorem]{Corollary}
\theorembodyfont{\rmfamily}
\newtheorem{remark}[theorem]{Remark}
\newtheorem{example}{Example}
\newtheorem{conj}{Conjecture}
\newtheorem{problem}[theorem]{Problem}
\newtheorem{step}{Step}
\newtheorem{alg}{Algorithm}
\newenvironment{proof}{\medskip\noindent{\bf Proof. }}{\hfill$\square$\medskip}
\newenvironment{proof*}[1]{\medskip\noindent{\bf Proof of #1.}}{\hfill$\square$\medskip}
\def\mb{\mathbb}
\def\mr{\mathrm}
\def\mc{\mathcal}
\def\ms{\mathscr}
\def\mk{\mathfrak}
\def\mf{\mathbf}
\def\R{\mathbb{R}}
\def\one{\mathbbm1}
\def\T{^{\sf T}}
\def\Pr{{\sf P}}
\def\E{{\sf E}}
\def\Q{{\mathbf Q}}
\def\bd{\text{bd}}
\def\eps{\varepsilon}
\def\wh{\widehat}
\def\cork{\text{\rm corank}}
\def\rank{\text{\rm rank}}
\def\Ker{\text{\rm Ker}}
\def\rk{\text{\rm rank}}
\def\supp{\text{\rm supp}}
\def\diag{\text{\rm diag}}
\def\sep{\text{\rm sep}}
\def\tr{\text{\rm tr}}
\def\vol{\text{\rm vol}}
\def\gen{\text{\rm gen}}
\def\Det{\text{\rm Det}}
\def\lin{\text{\rm lin}}
\def\intl{\int\limits}
\def\Hom{\text{\rm Hom}}
\def\Ort{\text{\sf O}}

\def\AA{\mathcal{A}}\def\BB{\mathcal{B}}\def\CC{\mathcal{C}}
\def\DD{\mathcal{D}}\def\EE{\mathcal{E}}\def\FF{\mathcal{F}}
\def\GG{\mathcal{G}}\def\HH{\mathcal{H}}\def\II{\mathcal{I}}
\def\JJ{\mathcal{J}}\def\KK{\mathcal{K}}\def\LL{\mathcal{L}}
\def\MM{\mathcal{M}}\def\NN{\mathcal{N}}\def\OO{\mathcal{O}}
\def\PP{\mathcal{P}}\def\QQ{\mathcal{Q}}\def\RR{\mathcal{R}}
\def\SS{\mathcal{S}}\def\TT{\mathcal{T}}\def\UU{\mathcal{U}}
\def\VV{\mathcal{V}}\def\WW{\mathcal{W}}\def\XX{\mathcal{X}}
\def\YY{\mathcal{Y}}\def\ZZ{\mathcal{Z}}

\def\Ab{\mathbf{A}}\def\Bb{\mathbf{B}}\def\Cb{\mathbf{C}}
\def\Db{\mathbf{D}}\def\Eb{\mathbf{E}}\def\Fb{\mathbf{F}}
\def\Gb{\mathbf{G}}\def\Hb{\mathbf{H}}\def\Ib{\mathbf{I}}
\def\Jb{\mathbf{J}}\def\Kb{\mathbf{K}}\def\Lb{\mathbf{L}}
\def\Mb{\mathbf{M}}\def\Nb{\mathbf{N}}\def\Ob{\mathbf{O}}
\def\Pb{\mathbf{P}}\def\Qb{\mathbf{Q}}\def\Rb{\mathbf{R}}
\def\Sb{\mathbf{S}}\def\Tb{\mathbf{T}}\def\Ub{\mathbf{U}}
\def\Vb{\mathbf{V}}\def\Wb{\mathbf{W}}\def\Xb{\mathbf{X}}
\def\Yb{\mathbf{Y}}\def\Zb{\mathbf{Z}}

\def\ab{\mathbf{a}}\def\bb{\mathbf{b}}\def\cb{\mathbf{c}}
\def\db{\mathbf{d}}\def\eb{\mathbf{e}}\def\fb{\mathbf{f}}
\def\gb{\mathbf{g}}\def\hb{\mathbf{h}}\def\ib{\mathbf{i}}
\def\jb{\mathbf{j}}\def\kb{\mathbf{k}}\def\lb{\mathbf{l}}
\def\nb{\mathbf{n}}\def\ob{\mathbf{o}}
\def\pb{\mathbf{p}}\def\qb{\mathbf{q}}\def\rb{\mathbf{r}}
\def\sb{\mathbf{s}}\def\tb{\mathbf{t}}\def\ub{\mathbf{u}}
\def\vb{\mathbf{v}}\def\wb{\mathbf{w}}\def\xb{\mathbf{x}}
\def\yb{\mathbf{y}}\def\zb{\mathbf{z}}

\def\Abb{\mathbb{A}}\def\Bbb{\mathbb{B}}\def\Cbb{\mathbb{C}}
\def\Dbb{\mathbb{D}}\def\Ebb{\mathbb{E}}\def\Fbb{\mathbb{F}}
\def\Gbb{\mathbb{G}}\def\Hbb{\mathbb{H}}\def\Ibb{\mathbb{I}}
\def\Jbb{\mathbb{J}}\def\Kbb{\mathbb{K}}\def\Lbb{\mathbb{L}}
\def\Mbb{\mathbb{M}}\def\Nbb{\mathbb{N}}\def\Obb{\mathbb{O}}
\def\Pbb{\mathbb{P}}\def\Qbb{\mathbb{Q}}\def\Rbb{\mathbb{R}}
\def\Sbb{\mathbb{S}}\def\Tbb{\mathbb{T}}\def\Ubb{\mathbb{U}}
\def\Vbb{\mathbb{V}}\def\Wbb{\mathbb{W}}\def\Xbb{\mathbb{X}}
\def\Ybb{\mathbb{Y}}\def\Zbb{\mathbb{Z}}

\def\Af{\mathfrak{A}}\def\Bf{\mathfrak{B}}\def\Cf{\mathfrak{C}}
\def\Df{\mathfrak{D}}\def\Ef{\mathfrak{E}}\def\Ff{\mathfrak{F}}
\def\Gf{\mathfrak{G}}\def\Hf{\mathfrak{H}}\def\If{\mathfrak{I}}
\def\Jf{\mathfrak{J}}\def\Kf{\mathfrak{K}}\def\Lf{\mathfrak{L}}
\def\Mf{\mathfrak{M}}\def\Nf{\mathfrak{N}}\def\Of{\mathfrak{O}}
\def\Pf{\mathfrak{P}}\def\Qf{\mathfrak{Q}}\def\Rf{\mathfrak{R}}
\def\Sf{\mathfrak{S}}\def\Tf{\mathfrak{T}}\def\Uf{\mathfrak{U}}
\def\Vf{\mathfrak{V}}\def\Wf{\mathfrak{W}}\def\Xf{\mathfrak{X}}
\def\Yf{\mathfrak{Y}}\def\Zf{\mathfrak{Z}}

\maketitle

\tableofcontents

\begin{abstract}
Subgraph densities have been defined, and served as basic tools, both in the
case of graphons (limits of dense graph sequences) and graphings (limits of
bounded-degree graph sequences). While limit objects have been described for
the "middle ranges", the notion of subgraph densities in these limit objects
remains elusive. We define subgraph densities in the orthogonality graphs on
the unit spheres in dimension $d$, under appropriate sparsity condition on the
subgraphs. These orthogonality graphs exhibit the main difficulties of defining
subgraphs the ``middle'' range, and so we expect their study to serve as a key
example to defining subgraph densities in more general Markov spaces.

The problem can also be formulated as defining and computing random orthogonal
representations of graphs. Orthogonal representations have played a role in
information theory, optimization, rigidity theory and quantum physics, so to
study random ones may be of interest from the point of view of these
applications as well.
\end{abstract}

\section{Introduction}

Let $H_d$ denote the orthogonality graph on $S^{d-1}$, i.e., the infinite graph
whose node set is the unit sphere $S^{d-1}$, and two nodes are adjacent if they
are orthogonal (as vectors in $\R^d$). For a finite graph $G$, we call a
homomorphism of $G$ into $H$ an {\it ortho-homomorphism} of $G$ (in dimension
$d$).

Our motivation for studying ortho-homomorphisms comes from graph limit theory.
This theory is rather well worked out for dense graphs on one end of scale
(where the limit objects are graphons), and bounded degree graphs on the other
(where the limit objects are graphings). In spite of several efforts to extend
the theory to the intermediate cases, no complete theory has been developed.

One basic question is: what structures can serve as limit objects for
``convergent'' graph sequences? Here at least we seem to have a common ground:
symmetric probability measures on the unit square (or on any other standard
probability space; these measures are essentially equivalent to time-reversible
Markov chains with a stationary distribution). These structures, which we call
{\it Markov spaces}, capture most special cases of interest, including limit
objects for $L_p$-convergence \cite{BCCZ}, shape convergence \cite{KLSz} and
action convergence \cite{BackSz}.

However, all these limit notions are defined through a global (right)
convergence. To characterize them by local (left) convergence, we need to
define the density of subgraphs in Markov spaces. At this time, we have a
definition beyond the the two extreme cases in rather special cases only.

Our main goal in this paper is to define subgraph densities in the
orthogonality graphs $H_d$ (which have a natural Markov space associated with
them). These spaces exhibit the main difficulties of the ``middle'' range, and
so we expect their study to serve as a key example to defining subgraph
densities in more general Markov spaces.

To justify this special choice, let us describe a somewhat unexpected further
connection. An ortho-homomorphism of $G$ in dimension $d$ is the same thing as
an orthonormal representation of the complementary graph $\overline{G}$ (see
\cite{GeomBook}). Such representations have played a role in information theory
\cite{LL79}, graph algorithms \cite{GLS1,GLSBook}, rigidity of frameworks
\cite{Alfa}, and quantum physics \cite{CSWquant}. Our results in this paper
could be thought of as establishing further connections with probability and
measure theory.

A related question is to define a {\it random} homomorphism of $G$ into $H_d$.
The notion of a random edge (the uniform distribution on orthogonal pairs of
vectors) is trivial, but for more complicated graphs, it is not obvious what
``random'' should mean. Ortho-homomorphisms from a given graph form a real
algebraic variety $\Hom(G,d)$, which can have a very complicated topology; but
ortho-homomorphisms in general position (see below) form a smooth semialgebraic
variety $\Sigma_{G,d}$. We could consider the surface measure inherited from
the ambient space $(\R^d)^V$; however, this does not seem to have really useful
properties. Natural conditions to impose are invariance under orthogonal
transformations of $\R^d$ and the {\it Markov property} (see Section
\ref{SEC:MARKOV}).

The example of the 4-cycle in dimension $3$ should be a warning. Obviously, for
every homomorphism $C_4\to H_3$, one pair of nonadjacent nodes will be mapped
onto parallel vectors (the other pair can form an arbitrary angle). But which
one? The variety $\Hom(G,d)$ splits into two, and $\Sigma_{C_4,3}=\emptyset$.

In this paper we show that for several classes of graphs satisfying appropriate
sparsity conditions, a measure on their ortho-homomorphisms in a given
dimension $d$ can be defined, with good properties. The measure we define is
always a Radon measure, but finiteness is not guaranteed. Indeed, we'll give
examples where this measure is finite, and so it can be scaled to a probability
measure (defining a ``random ortho-homomorphism''); unfortunately, we also have
examples where the measure is infinite. The combinatorial significance of this
finiteness (depending on $G$ and $d$) remains an interesting unsolved problem.
When this measure is finite, then its value on the set of all
ortho-homomorphisms appears to be good substitute for the homomorphism density.

We describe three methods for defining subgraph densities in $H_d$.

\medskip

\noindent{\it Sequential mapping.} One of our constructions works for graphs
not containing a complete bipartite graph $K_{a,b}$ with $a+b>d$. This
condition is equivalent to saying that $\overline{G}$ is $(n-d)$-connected.
We'll call such graphs {\it $d$-sparse}. It implies, in particular, that every
node has degree at most $d-1$. We say that a mapping $x:~V\to \R^d$ is in {\it
general position}, if any $d$ elements of $V$ are mapped onto linearly
independent vectors. The following fact was proved in \cite{LSS} (Theorem 2.1).

\begin{prop}\label{PROP:LSS}
A graph $G$ has an ortho-homomorphism in $\R_d$ in general position if and only
if it is $d$-sparse.
\end{prop}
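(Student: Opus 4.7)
The plan is to prove necessity via a direct dimension count, and sufficiency by placing the vertices one at a time along a degenerate ordering.

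For necessity, suppose $x\colon V \to S^{d-1}$ is a general-position ortho-homomorphism and $G$ contains $K_{a,b}$ with parts $A$ and $B$. Then every vector in $x(A)$ is orthogonal to every vector in $x(B)$, so $\mathrm{span}(x(A))$ and $\mathrm{span}(x(B))$ are orthogonal subspaces of $\R^d$. General position forces $\dim \mathrm{span}(x(A)) = \min(a,d)$ and similarly $\dim \mathrm{span}(x(B)) = \min(b,d)$, whence the sum of these dimensions is at most $d$, yielding $a+b \le d$.

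For sufficiency, I would first observe that $d$-sparseness with $a=1$ gives $\Delta(G) \le d-1$, and since every subgraph of $G$ is itself $d$-sparse, $G$ is $(d-1)$-degenerate. Fix an ordering $v_1, \dots, v_n$ of $V$ with back-degree $d_i \le d-1$ for every $i$, and construct the images $x_1, \dots, x_n$ step by step: at step $i$, pick $x_i \in W_i \cap S^{d-1}$, where $W_i = \{x_j \colon j<i,\ v_j \sim v_i\}^\perp$ has dimension $d - d_i \ge 1$, subject to the additional requirement that $x_i$ lies outside $\mathrm{span}(T)$ for every $(d-1)$-subset $T$ of predecessors. Such an $x_i$ exists provided $W_i$ is not contained in any such $\mathrm{span}(T)$.

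The hard part will be maintaining this non-containment throughout the process, especially in the tight case $d_i = d-1$ where $W_i$ reduces to a line. My plan is to strengthen the inductive hypothesis so that the tuple $(x_1,\dots,x_{i-1})$ is chosen generically inside its valid configuration variety, thereby avoiding each polynomial relation of the form ``$W_i \subseteq \mathrm{span}(T)$''; these relations are proper (not identically true) once one non-degenerate configuration is exhibited. Producing such a base configuration is where the full $K_{a,b}$-free hypothesis, and not merely the degree bound, must be used. An alternative route I would consider is to compute $\dim \Hom(G,d) = n(d-1) - |E(G)|$ by induction on $n$ for $d$-sparse $G$ --- the expected dimension from the independence of the edge orthogonality constraints --- and then appeal to Zariski-openness of the general-position locus on each irreducible component to conclude that it is nonempty.
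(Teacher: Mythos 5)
Your necessity argument is fine, but that is the easy half. The gap is in sufficiency, where all the content of the proposition lies, and your plan for it is circular: to know that each degeneracy condition ``$W_i\subseteq\mathrm{span}(T)$'' cuts out a proper subvariety of the configuration variety, you need to exhibit one configuration avoiding all such conditions, i.e.\ exactly a general-position ortho-homomorphism, which is the statement being proved. You acknowledge this (``producing such a base configuration is where the full $K_{a,b}$-free hypothesis must be used'') but never do it, and the degree bound alone demonstrably cannot carry the induction: for $C_4$ in $\R^3$ all degrees are $\le d-1=2$, the greedy placement never gets stuck, yet every ortho-homomorphism maps one non-adjacent pair to parallel vectors, so the general-position locus is empty. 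Hence the inductive step must use $d$-sparsity (equivalently, $(n-d)$-connectivity of $\overline{G}$) in an essential, global way, and how to do that is precisely the hard part of the Lov\'asz--Saks--Schrijver theorem.

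Your fallback route fails for a similar reason: even granting $\dim \Hom(G,d)=n(d-1)-|E|$, Zariski-openness of the general-position locus on each component does not make it nonempty---an open subset of a variety may be empty, and the dimension count does not preclude this. Indeed $\Hom(C_4,3)$ has exactly the ``expected'' dimension $4\cdot 2-4=4$ while its general-position part is empty, so the expected-dimension heuristic by itself cannot distinguish sparse from non-sparse graphs. The paper does not reprove the proposition; it relies on \cite{LSS}, where sufficiency is obtained from the random sequential construction described in the Introduction: place the vertices one at a time, uniformly on the admissible subsphere, and show that the resulting random ortho-homomorphism is in general position almost surely. The key mechanism (quantified in this paper's Lemmas \ref{LEM:SWAP} and \ref{LEM:SWAP2}) is that transposing consecutive vertices changes the resulting distribution only within mutual absolute continuity, so one may pass to convenient orderings and, using the sparsity hypothesis, show that any fixed $d$-tuple of images is linearly dependent with probability zero. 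Some probabilistic or measure-theoretic substitute of this kind is exactly what your outline is missing.
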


The main tool in the proof of Proposition \ref{PROP:LSS} was the following. Let
us order the nodes of $G$ in some way, and choose the images of the nodes
one-by-one. At every step, the new node is restricted to unit vectors
orthogonal to those neighbors that are already mapped. By the degree condition,
the available vectors form a nonempty sphere of some dimension, and we choose a
next vector on this sphere randomly and uniformly. Repeating this for all
nodes, we get an ortho-homomorphism, which we call a {\it random sequential
ortho-homomorphism} of $G$. The fact that this ortho-homomorphism is in general
position almost surely is the main result in \cite{LSS}.

The distribution of the random sequential ortho-homomorphism may depend on the
ordering of the nodes. If $G$ is a tree, then we get the same distribution for
every search order $(1,\dots,n)$ of the nodes, but not for other orders.
However, we can define a density function for which the modified distribution
will be independent of the ordering. One of our main results can be stated as
follows:

\begin{theorem}\label{THM:ORTHO-MARKOV1}
For every simple $d$-sparse graph $G$, there exists a nonzero Radon measure on
ortho-homomorphisms in dimension $d$ with a Markovian conditioning.
\end{theorem}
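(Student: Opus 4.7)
The plan is to construct the measure $\mu_G$ as a formal product of edge-orthogonality deltas against the uniform measures on the spheres, and then to verify ordering-independence, non-triviality, local finiteness, and the Markov property in turn.

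Concretely, I propose to define $\mu_G$ by the formal distributional expression
$$d\mu_G(x) = \prod_{uv \in E(G)} \delta(\langle x_u, x_v\rangle) \prod_{v \in V(G)} d\sigma(x_v),$$
where $\sigma$ is the rotation-invariant probability measure on $S^{d-1}$. To make this rigorous, I fix an ordering $v_1, \ldots, v_n$ of $V(G)$ and integrate a continuous test function $f : (S^{d-1})^V \to \R$ sequentially: at step $i$, the coordinate $x_{v_i}$ is integrated with respect to the unnormalized surface measure on the subsphere $S_i$ of $S^{d-1}$ orthogonal to the already-placed neighbors of $v_i$, divided by $\sqrt{\det G_i}$, where $G_i$ is the Gram matrix of those earlier neighbors. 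This Jacobian factor is exactly what the identity $\int \delta(Ax)\, g(x)\, dx = (\det AA\T)^{-1/2}\int_{\{Ax=0\}} g\, d\lambda$ produces when the edge-delta constraints involving $v_i$ are resolved at step $i$. The $d$-sparsity of $G$ guarantees that at every step $|N(v_i)\cap \{v_1,\ldots,v_{i-1}\}| \le d-1$, so $S_i$ is a nonempty sphere of positive dimension whenever we are in general position.

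The first and main step is to prove that this integral is independent of the ordering. I would use an adjacent-transposition argument: the symmetric group is generated by adjacent transpositions, so it suffices to show invariance under swapping $v_i$ and $v_{i+1}$. When $v_iv_{i+1} \notin E(G)$, this reduces to Fubini combined with a Gram-determinant identity expressing how the Jacobian changes when one vector is added before another. When $v_iv_{i+1} \in E(G)$, a mutual orthogonality constraint is processed at one step in the first ordering and at the other step after the swap; a Cauchy--Binet type identity then shows that the combined Jacobian is symmetric in the roles of $v_i$ and $v_{i+1}$.

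Once ordering-independence is established, the remaining properties follow readily from the edge-factorized form. Proposition \ref{PROP:LSS} provides a general-position ortho-homomorphism $x^* \in \Sigma_{G,d}$; in a neighborhood of $x^*$ all Gram determinants $\det G_i$ are bounded away from $0$, so $\mu_G$ has strictly positive bounded density there, making $\mu_G$ nonzero and locally finite on $\Sigma_{G,d}$. Global finiteness is not claimed: the Jacobians $(\det G_i)^{-1/2}$ may blow up along the locus where general position fails, consistent with the infinite-mass examples alluded to in the introduction. The Markov property is immediate from the edge-factorization: for any separator $S \subset V(G)$, the factors $\delta(\langle x_u, x_v\rangle)$ that do not involve $S$ split into blocks indexed by the connected components of $G \setminus S$, so conditional on $x_S$ the coordinates on different components are independent. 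The principal obstacle in carrying out this plan is the ordering-invariance step, since the individual Jacobian contributions genuinely depend on the ordering in nontrivial examples (already in $C_4$ at dimension $4$, the factor $1-\langle x_1, x_3\rangle^2$ enters to different powers along different orderings, only to be compensated by integrations over different subspheres); verifying the required Gram-determinant identity in full generality is where most of the technical work lies.
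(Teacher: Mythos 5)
Your construction is, up to an overall constant, exactly the measure the paper builds: resolving the edge deltas sequentially produces, at each step, the uniform measure on the subsphere orthogonal to the earlier neighbors weighted by $A_{d-d_p(v)-1}/\Det(x_p(v))$, i.e.\ the paper's $\varphi_p=f_p\cdot\rho_p$ with $f_p$ as in \eqref{EQ:F-DEF}. The plan (adjacent transpositions for order-invariance, general position via Proposition \ref{PROP:LSS} for nontriviality and local finiteness, component-wise factorization for the Markov property) is also the paper's. The problem is that the one step carrying all the weight --- order-invariance when the swapped vertices are adjacent --- is not proved, and your proposed mechanism for it is not adequate. This is not a ``Cauchy--Binet type'' or Gram-determinant identity: the per-step Jacobians $\prod_v \Det(x_p(v))^{-1}$ are genuinely not symmetric under the swap (already for the path $u$--$v$--$w$, the order $(u,v,w)$ gives weight $1$ while $(u,w,v)$ gives $1/\Det(x_u,x_w)$), and what must be shown is that the two \emph{reference} measures --- the iterated subsphere constructions $\rho_p$ and $\rho_r$ --- have Radon--Nikodym derivative exactly reciprocal to the ratio of the determinant weights. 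That is a statement about disintegration of surface measures on spheres, which the paper proves by a delicate limiting argument with small cubes in tangent spaces (Lemma \ref{LEM:SWAP}, then Lemma \ref{LEM:SWAP2} and the cancellation in Lemma \ref{LEM:MAIN}). You correctly flag that the compensation between Jacobians and domains is where the work lies, but asserting it is where your proof stops; without a proof of this swap lemma (or an equivalent rigorous joint coarea argument showing every sequential resolution equals one fixed order-free measure), the measure is not shown to be well defined, and everything downstream is conditional on that.

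Two smaller points. First, your delta-resolution Jacobian $(\det G_i)^{-1/2}$ is correct only because on the constraint set the gradients of $x\mapsto\langle x,y_j\rangle$ restricted to the sphere are the $y_j$ themselves (they are already tangent); this deserves a line, since it is what makes the weight $1/\Det(x_p(v_i))$ rather than something involving the new vector. Second, ``Markov property is immediate'' glosses over the fact that the paper's notion of Markovian conditioning (C1)--(C3) requires conditional measures $\varphi_z$ defined for \emph{every} partial configuration $z$ on every $S\subseteq V$, compatibly; the paper gets this by continuing the sequential construction from $z$ (equation \eqref{EQ:PHI-DEFP} and Lemma \ref{LEM:MARKOV}), and your edge-factorization heuristic should be upgraded to such a construction. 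These are fixable, but the missing swap lemma is a genuine gap at the crux of the argument.
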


The measure of all homomorphisms is a good generalization of the notion of
homomorphism density, a basic tool in the theory of dense graph limits. The
Markov property is usually defined for probability measures, and we cannot
always normalize our measure on ortho-homomorphisms to a probability measure.
We'll describe the formal definition later.

\medskip

\noindent{\it Spectral methods.} Our other construction is based on functional
analysis. The orthogonality graph $H_d$ defines a compact linear operator
$\Ab_d:~L^2(S^{d-1},\pi)\to L^2(S^{d-1},\pi)$, where $\pi$ is the uniform
probability measure on $S^{d-1}$, and $(\Ab_d f)(x)$ is the average of $f$ on
the $(d-2)$-dimensional sphere orthogonal to $x$. Taking the $k$-th power of
this operator corresponds to subdividing each edge of $G$ by $k-1$ nodes. It
turns out that the square of this operator is smooth enough so that random
subgraphs and subgraph densities can be defined by ``classical'' formulas.
Also, the trace of $\Ab_d^k$ gives the density of $k$-cycles (at least for
sufficiently large $k$).

Using the spectral decomposition of $\Ab_d$, we derive explicit formulas for
the densities of cycles in $H_d$. As an interesting fact, cycle densities in
$H_4$ can be expressed by the zeta-function.

\medskip

\noindent{\it Approximation by graphs and graphons.} The third method of
defining and calculating subgraph densities in $H_d$ is based on approximating
$H_d$ by graphons and finite graphs, and calculating the density in $H_d$ as
the limit of densities in these approximations.

A consequence of our results is that $H_d$ {\it is} the limit of finite graphs
in the left-convergence sense. While this property is easy for graphons, it is
not known in the bounded-degree case whether all graphings can be approximated
by finite graphs (this is equivalent for the famous soficity problem for
finitely generated groups). So the fact that $H_d$ is ``sofic'' in this sense
has some independent interest.

\medskip

Finally, it should be noted that a good part of the results of this paper
extend to more general Markov spaces. In particular, a general version of the
operator $\Ab_d$ is called a {\it graphop} and it arises in the theory of
action convergence \cite{BackSz} and, in an equivalent form, in the theory of
shape convergence \cite{KLSz}.

\section{Preliminaries}

\subsection{Notation}

We consider the unit sphere $S^{d-1}$ in $\R^d$. (The cases $d\le 2$ are very
simple, so to avoid trivial complications, we assume throughout that $d\ge 3$.)
For two real quantities (depending on a choice of points in $S^{d-1}$), let
$A\lesssim B$ denote that there is a constant $c>0$ such that $A \le cB$. Here
the constant may depend on the dimension and on the graph denoted by $G$, but
not on other variables. Let $A_k$ denote the surface area of $S^k$. It is well
known that
\begin{equation}\label{EQ:SK-DEF}
A_k=
  \begin{cases}
    \displaystyle\frac{2 (2\pi)^{k/2}}{(k-1)!!} & \text{if $k$ is even}, \\[12pt]
    \displaystyle\frac{(2\pi)^{(k+1)/2}}{(k-1)!!} & \text{if $k$ is odd},
  \end{cases}
\end{equation}
and for $a,b\in \Zbb_+$,
\begin{equation}\label{EQ:AST-DEF}
\intl_0^{\pi/2} (\sin \theta)^a (\cos \theta)^b\,d\theta
= \Big(\frac{\pi}{2}\Big)^{e(a,b)} \frac{(a-1)!!(b-1)!!}{(a+b)!!} = \frac{A_{a+b+1}}{A_a A_b},
\end{equation}
where $e(a,b)=(a-1)(b-1) \mod 2$, and $(-1)!! = 0!! =1!!=1$.

When we talk about a ``random'' point of a sphere, we mean a random point from
the uniform distribution on the sphere.

\subsection{Generalized determinants}\label{SEC:DETS}

For a finite set $X=\{x_1,\dots,x_m\}\subseteq\R^d$, we define the quantity
\[
\Det(X)=\Det(x_1,\dots,x_m)=|x_1\land\dots\land x_m|=\sqrt{\det\big((x_i\T x_j)_{i,j=1}^m\big)}.
\]
We define $\Det(\emptyset)=1$. For $m=1$, $\Det(X)=\Det(x_1)=|x_1|$. Note that
$\Det(X)\ge 0$, and $\Det(X)>0$ if and only if $X$ consists of linearly
independent vectors.

\begin{lemma}\label{LEM:XEXPECT}
Let $n,d\in\Nbb$ and $p\in\R$ such that $1\le n< d$, and let $x_1,\dots,x_n$ be
independent random points on $S^{d-1}$. Then
\[
\E\Big(\frac{\Det(x_1,\dots,x_n)^p}{\Det(x_1,\dots,x_{n-1})^p}\Big) \qquad\text{and}\qquad
\E\big(\Det(x_1,\dots,x_n)^p\big)
\]
are finite if and only if $p>n-d-1$. If $p$ is an integer, then we have the
explicit formulas
\begin{align*}
\E\Big(\frac{\Det(x_1,\dots,x_n)^p}{\Det(x_1,\dots,x_{n-1})^p}\Big)
= \frac{A_{d+p-1} A_{d-n}}{A_{d-1}A_{d-n+p}}.
\end{align*}
and
\begin{align*}
\E\big(\Det(x_1,\dots,x_n)^p\big) = \Big(\frac{A_{d+p-1}}{A_{d-1}}\Big)^{n-1}
\frac{A_{d-2}\cdots A_{d-n}}{A_{d+p-2}\cdots A_{d+p-n}}.
\end{align*}
\end{lemma}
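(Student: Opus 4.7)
The plan is to reduce the first expectation to a one-dimensional beta-type integral via a geometric interpretation of the ratio, and to derive the unconditional determinant formula by telescoping combined with iterated conditioning.

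\textbf{Geometric step.} The ratio $\Det(x_1,\dots,x_n)/\Det(x_1,\dots,x_{n-1})$ equals the distance from $x_n$ to the linear subspace $L=\mathrm{span}(x_1,\dots,x_{n-1})$. Since $n<d$, with probability $1$ this subspace has dimension exactly $n-1$. Conditional on $x_1,\dots,x_{n-1}$, one can write $x_n=(\cos\theta)u+(\sin\theta)v$ with $u\in L\cap S^{d-1}$ and $v\in L^{\perp}\cap S^{d-1}$, so the ratio equals $\sin\theta$. The standard fact that the squared length of the orthogonal projection of a uniform point of $S^{d-1}$ onto a $k$-dimensional subspace has a $\mathrm{Beta}(k/2,(d-k)/2)$ distribution, applied with $k=n-1$ and translated into $\theta$, gives that $\theta$ has density proportional to $(\sin\theta)^{d-n}(\cos\theta)^{n-2}$ on $[0,\pi/2]$; crucially, this conditional distribution does \emph{not} depend on the particular positions of $x_1,\dots,x_{n-1}$.

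\textbf{First expectation.} It follows that
\[
\E\bigg(\Big(\frac{\Det(x_1,\dots,x_n)}{\Det(x_1,\dots,x_{n-1})}\Big)^p\bigg)
=\frac{\intl_0^{\pi/2}(\sin\theta)^{p+d-n}(\cos\theta)^{n-2}\,d\theta}{\intl_0^{\pi/2}(\sin\theta)^{d-n}(\cos\theta)^{n-2}\,d\theta}.
\]
The denominator is a finite positive beta-type integral; the numerator converges if and only if $p+d-n>-1$, i.e.\ iff $p>n-d-1$. For integer $p$, applying formula (\ref{EQ:AST-DEF}) to both integrals and cancelling the common factor $A_{n-2}$ produces the announced closed form $A_{d+p-1}A_{d-n}/(A_{d-1}A_{d-n+p})$.

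\textbf{Second expectation and finiteness.} For the full determinant I would telescope
\[
\Det(x_1,\dots,x_n)^p=\prod_{k=1}^{n}\Big(\frac{\Det(x_1,\dots,x_k)}{\Det(x_1,\dots,x_{k-1})}\Big)^p,
\]
with $\Det(\emptyset)=1$ and the $k=1$ factor equal to $1$. By the geometric step, the conditional distribution of the $k$-th ratio given $x_1,\dots,x_{k-1}$ does not depend on those variables, so the ratios for $k=2,\dots,n$ are mutually independent; the expectation thus factors, and inserting the previous formula for each $k$ and telescoping yields the stated product. Finiteness of the whole product is controlled by the tightest single factor, namely $k=n$, giving again $p>n-d-1$. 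The only genuinely delicate point in the argument is the conditional-distribution claim of the geometric step, which rests on the invariance of the uniform measure on $S^{d-1}$ under the subgroup of $\Ort(d)$ fixing $L$; once this is in hand the rest is routine bookkeeping with (\ref{EQ:AST-DEF}).
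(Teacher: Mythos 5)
Your proposal is correct and follows essentially the same route as the paper: interpret the ratio as the distance (i.e.\ $\sin\theta$) from $x_n$ to $\lin(x_1,\dots,x_{n-1})$, note that its conditional law does not depend on the earlier points, reduce the first expectation to the ratio of the two $\sin$--$\cos$ integrals handled by \eqref{EQ:AST-DEF}, and then obtain the second formula by the telescoping product and independence of the factors. Your Beta-distribution justification of the angle density is just an equivalent packaging of the paper's product-of-spheres computation, so there is nothing substantive to add.
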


In these expectations, we could condition on (say) a fixed $x_1$, by the
symmetry of the sphere. Note that $p$ may be negative, but if $p\le n-d-1$,
then the expectations are infinite.

\begin{proof}
For $n=1$ the identities are trivial, so we assume that $n\ge 2$. The ratio
$\Det(x_1,\dots,x_n)/\Det(x_1,\dots,x_{n-1})$ is the (unsigned) distance of
$x_n$ from the subspace $L=\lin(x_1,\dots,x_{n-1})$, which has dimension $n-1$
with probability $1$. The distribution of this distance is independent of
$x_1,\dots,x_{n-1}$, so we may fix $L$ and just take expectation in $x_n$.

Let $\theta$ be the angle between $x_n$ and $L$ ($0\le\theta\le\pi/2$), then
\[
\frac{\Det(x_1,\dots,x_n)}{\Det(x_1,\dots,x_{n-1})}= \sin\theta.
\]
For a fixed $\theta$, points at this distance from $L$ form the direct product
of the two spheres $L\cap (\cos\theta)S^{d-1}$ and $L^\perp \cap
(\sin\theta)S^{d-1}$, and so their density is proportional to
$(\cos\theta)^{n-2}(\sin\theta)^{d-n}$. Hence
\[
\E\Big(\frac{\Det(x_1,\dots,x_n)^p}{\Det(x_1,\dots,x_{n-1})^p}\Big)=
\frac{\intl_0^{\pi/2} (\sin\theta)^{d-n+p}(\cos\theta)^{n-2}\,d\theta}
{\intl_0^{\pi/2} (\sin\theta)^{d-n}(\cos\theta)^{n-2}\,d\theta}.
\]
Using that $2\theta/\pi\le\sin\theta\le \theta$, it follows that the numerator
is finite if and only if $d_n+p>-1$, proving the first assertion. Substituting
from \eqref{EQ:AST-DEF} for integral $p$, we get the first formula in the
lemma.

To prove the second identity, we use the telescopic product decomposition
\[
\Det(x_1,\dots,x_n)^p = \prod_{r=2}^n \frac{\Det(x_1,\dots,x_r)^p}{\Det(x_1,\dots,x_{r-1})^p}.
\]
As remarked above, the factors are independent random variables, and hence
\begin{align*}
\E(\Det(x_1,\dots,x_n)^p) &= \prod_{r=2}^n \E\Big(\frac{\Det(x_1,\dots,x_r)^p}{\Det(x_1,\dots,x_{r-1})^p}\Big)
= \prod_{r=2}^n  \frac{A_{d+p-1} A_{d-r}}{A_{d-1}A_{d-r+p}}\\
&=\Big(\frac{A_{d+p-1}}{A_{d-1}}\Big)^{n-1} \frac{A_{d-2}\cdots A_{d-n}}{A_{d+p-2}\cdots A_{d+p-n}}.
\end{align*}
\end{proof}

For small values of $|p|$, we can cancel most of the terms on the right hand
side of the second equality. The most important special case for us will be
$p=-1$:
\[
\E\Big(\frac1{\Det(x_1,\dots,x_n)}\Big) = \frac{A_{d-2}^n}{A_{d-1}^{n-1}A_{d-n-1}}.
\]
This identity makes sense for $n=0$ as well, and it is trivially valid. In
particular $1/\Det$ is integrable provided $n\le d-1$. We shall make use of the
following one-sided bound on averages of such inverses of determinants.

\begin{lemma}\label{LEM:DETAVERAGE}
Let $x_1,\dots,x_n\in S^{d-1}$, and let $B_r(x)$ denote the $r$-neighborhood of
$x$ on $S^{d-1}$. Let $D_r(x_1,\dots,x_n)$ denote the average of
$1/\Det(x_1,\dots,x_n)$ over $B_r(x_1)\times \dots\times B_r(x_n)$. Then
\begin{equation}\label{EQ:D-AVE}
D_r(x_1,\dots,x_n)\Det(x_1,\dots,x_n) < C_{n,d},
\end{equation}
where $C_{n,d}>0$ may depend on $d$ and $n$, but not on $r$ and $(x_1,\dots,x_n)$.
\end{lemma}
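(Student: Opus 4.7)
The plan is to reduce \eqref{EQ:D-AVE} to a one-variable averaging estimate and iterate it $n$ times, exploiting the multiplicative structure of the Gram determinant. The key auxiliary statement is the following Sub-claim: for any linear subspace $L\subseteq\R^d$ of dimension $k\le d-2$, any $x\in S^{d-1}$ and any $r>0$,
\[
\int_{B_r(x)}\frac{d\sigma(y)}{\sin\theta(y,L)}\le\frac{C'_{k,d}\,|B_r|}{\sin\theta(x,L)},
\]
where $\sigma$ is the spherical surface measure and $\theta(\cdot,L)$ is the angle to $L$.

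Granted the Sub-claim, the lemma follows by iteration. Using the decomposition $\Det(y_1,\dots,y_n)=\Det(y_1,\dots,y_{n-1})\sin\theta(y_n,L_{n-1})$ with $L_{n-1}=\lin(y_1,\dots,y_{n-1})$, integrating $y_n$ over $B_r(x_n)$ via the Sub-claim, and then using the algebraic identity $\Det(y_1,\dots,y_{n-1})\sin\theta(x_n,L_{n-1})=\Det(y_1,\dots,y_{n-1},x_n)$, I obtain
\[
\int_{B_r(x_n)}\frac{dy_n}{\Det(y_1,\dots,y_n)}\le\frac{C'\,|B_r|}{\Det(y_1,\dots,y_{n-1},x_n)}.
\]
The right-hand side has the same functional form as the original integrand but with $y_n$ replaced by $x_n$. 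Since $\Det$ is symmetric in its arguments, the same manoeuvre can be applied to $y_{n-1}$, then to $y_{n-2},\dots,y_1$, each time swapping one $y_i$ for $x_i$; at every stage the relevant subspace still has dimension $n-1\le d-2$, so the Sub-claim applies with the same constant $C'=C'_{n-1,d}$. After $n$ iterations the integral is bounded by $(C')^n|B_r|^n/\Det(x_1,\dots,x_n)$, and dividing by $|B_r|^n$ gives \eqref{EQ:D-AVE} with $C_{n,d}=(C'_{n-1,d})^n$.

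To prove the Sub-claim, set $h=\sin\theta(x,L)$ and split on whether $r\le h/2$ or $r>h/2$. In the first case the triangle inequality for Euclidean distance to $L$ gives $\sin\theta(y,L)\ge h/2$ throughout $B_r(x)$, so the average is at most $2/h$. The delicate second case is where the singular locus $L\cap S^{d-1}$ can lie within spherical distance $r$ of $x$. I use coordinates adapted to $L$: write $y=\cos\phi\,u+\sin\phi\,v$ with $u\in L\cap S^{d-1}$, $v\in L^\perp\cap S^{d-1}$, so that $\sin\theta(y,L)=\sin\phi$ and $d\sigma(y)=(\cos\phi)^{k-1}(\sin\phi)^{d-k-1}\,d\phi\,du\,dv$. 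Writing $x=\cos\phi_0\,u_0+\sin\phi_0\,v_0$ with $\sin\phi_0=h$, a second-order expansion of the constraint $y\cdot x\ge\cos r$ confines $B_r(x)$ to an ellipsoidal box with $|\phi-\phi_0|\lesssim r$, $d_S(u,u_0)\lesssim r/\cos\phi_0$, $d_S(v,v_0)\lesssim r/h$; integrating $(\cos\phi)^{k-1}(\sin\phi)^{d-k-2}$ over this box and comparing with $|B_r|/h\asymp r^{d-1}/h$ yields the bound, after patching subcases ($h$ small vs.\ large, $r\le\pi/2$ vs.\ $r>\pi/2$, and the regime where the spherical caps saturate at the full sphere). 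This second case is the main obstacle; the hypothesis $k\le d-2$ is sharp, since it is exactly what makes the exponent $d-k-2$ nonnegative and hence $1/\sin\theta(\cdot,L)$ locally integrable on $S^{d-1}$, and it is also what pins the overall lemma to the range $n\le d-1$.
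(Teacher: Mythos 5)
Your proposal is correct, but it takes a genuinely different route from the paper's. The paper proceeds by induction on $n$, first reducing to small $r$ by a continuity/compactness argument, and then splitting into two cases according to whether the minimal distance $R$ from one of the $x_j$ to the span of the others is large or small compared with $r$: in the first case a perturbation expansion of the wedge product shows that $\Det$ drops by at most a factor $2$ on the whole product of balls, and in the second case a one-variable integral estimate (their \eqref{eq:singular_avg}, which is essentially the worst case of your Sub-claim, with the centre placed on the subspace so that the bound degenerates to $C''_{d,n}/(r\Det(y_1,\dots,y_{n-1}))$) is combined with $\Det(x_1,\dots,x_n)\le qr\,\Det(x_1,\dots,x_{n-1})$ and the induction hypothesis. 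You instead keep the full strength of the one-variable estimate, retaining the factor $1/\sin\theta(x,L)$ rather than the worst-case $1/r$, and this sharper form is exactly what allows the clean $n$-fold iteration via the base-times-height factorization $\Det(y_1,\dots,y_m)=\Det(y_1,\dots,y_{m-1})\,\mathrm{dist}\bigl(y_m,\lin(y_1,\dots,y_{m-1})\bigr)$ together with the symmetry of $\Det$, swapping one integration variable $y_i$ for the fixed centre $x_i$ at each step; this removes both the case distinction and the restriction to small $r$, and yields the explicit constant $C_{n,d}=(C'_{n-1,d})^n$. Your identification of $k\le d-2$ (equivalently $n\le d-1$) as the exact integrability threshold matches the paper's implicit hypothesis. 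Two small points to tighten in a write-up: the containment of $B_r(x)$ in your product box follows most easily from the $1$-Lipschitz character of the orthogonal projections onto $L$ and $L^\perp$ applied to the chord bound $|y-x|\le r$ (no second-order expansion is needed, and it handles all subcases at once up to the saturation regimes $r\gtrsim1$ and $\cos\phi_0$ small, where the trivial bound by the full-sphere integral of $1/\sin\theta(\cdot,L)$ suffices); and in the iteration one should note that degenerate tuples, where the relevant span has dimension below $n-1$ so that both sides of your intermediate bound are $+\infty$, form null sets, so Tonelli applies at every stage.
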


\begin{proof}
We shall fix $d$, and proceed by induction on $n$. The case $n=1$ is trivial,
as the determinant is constant 1. So let us now assume $C_{n-1,d}$ exists, and
show that $C_{n,d}$ exists as well ($1<n<d$). Since $1/\Det(x_1,\dots,x_n)$ is
positive, integrable and continuous outside of the null-set of its
singularities, the map
\[
(x_1,\ldots,x_n)\mapsto D_r(x_1,\dots,x_n)\Det(x_1,\dots,x_n)
\]
is continuous, with a maximum $M_r$, and also $r\mapsto M_r$ is continuous on
$(0,\infty)$. To show that $M_r$ is bounded above note that it is constant once
$r\geq \pi$, and so we only need to show that it remains bounded above on some
finite interval $(0,\varepsilon]$, where $\varepsilon$ may be chosen
arbitrarily small. Let $q:=1/(\sqrt[n]{1,5}-1)$, and set
$\varepsilon:=1/(10q)$.

We distinguish two cases based on the relative positions of the points. We may
assume without loss of generality that the minimal distance $R$ from an $x_j$
to the subspace generated by the other $n-1$ points is realized for $j=n$.

\smallskip

\underline{Case 1}: $R\geq qr$.

\smallskip

Note that $R$ is also a lower bound on any distance from one of the $x_j$'s to
any subspace generated by some selection of the other points. In particular, we
have that for any $J\subseteq [n]$, $ \left| \bigwedge_{j\in J} x_j \right|\leq
\Det(x_1,\ldots,x_n)/R^{n-|J|} $. Therefore for any $\rho\in(\mb{R}^d)^n$ with
$|\rho_k|<r$ for all $1\leq k\leq n$,
\begin{align*}
\Det(x_1+\rho_1,&\ldots,x_n+\rho_n)\geq~ \Det(x_1,\ldots,x_n)-\sum_{J\subsetneq [n]}
\left| \bigwedge_{j\in J} x_j  \bigwedge_{i\in [n]\setminus J} \rho_i
\right|\\
\geq& ~\Det(x_1,\ldots,x_n)-\sum_{J\subsetneq [n]} \Det(x_1,\ldots,x_n)\frac{r^{n-|J|}}{R^{n-|J|}}\\
\geq & ~\Det(x_1,\ldots,x_n)\left(1-\sum_{j=1}^{n} \frac{1}{q^j}\binom{n}{j}\right)\\
=&~ \Det(x_1,\ldots,x_n)\left(2-\Big(1+\frac{1}{q}\Big)^n\right) = \frac{\Det(x_1,\ldots,x_n)}{2}.
\end{align*}
Consequently $D_r(x_1,\dots,x_n)\Det(x_1,\dots,x_n)\leq 2$.

\smallskip

\underline{Case 2}: $R< qr$.

\smallskip

In this case $\Det(x_1,\ldots,x_{n-1},x_n)\leq qr\Det(x_1,\ldots,x_{n-1})$. Fix
any choice of linearly independent points $y_i\in S^{d-1}$ ($1\leq i\leq n-1$).
Then the set of points $z$ such that
$\Det(y_1,\ldots,y_{n-1},z)=t\Det(y_1,\ldots,y_{n-1})$ form a
$(d-n+1)$-dimensional sphere of radius $t$ around the $(n-2)$-dimensional
subspace $\lin(y_1,\ldots,y_{n-1})$. After intersecting with $S^{d-1}$, the
dimension of suitable $z$'s is reduced to $d-n>0$. Now $\int_0^r (t^{d-n})/t
\,dt= r^{d-n}/(d-n)$, and so for any $y\in S^{d-1}\cap
~\lin(x_1,\ldots,x_{n-1})$ we obtain that
\[
\intl_{B_r(y)} \frac1{\Det(y_1,\ldots,y_{n-1},z)}\,d\pi(z)\le
\frac{C'_{d,n}r^{d-n}r^{n-2}}{\Det(y_1,\ldots,y_{n-1})}
= \frac{C'_{d,n}r^{d-2}}{\Det(y_1,\ldots,y_{n-1})},
\]
where $C'_{d,n}$ does not depend on $r$ or $y$ (recall that $r\in(0,1/(10q)]$).
Consequently,
\begin{align}\label{eq:singular_avg}
\nonumber \mb{E}_{z\in B_r(y)}\left(\frac1{\Det(y_1,\ldots,y_{n-1},z)}\right)&~\le
\frac1{\pi(B_r(y))}\frac{C'_{d,n}r^{d-2}}{r^{d-1}\Det(y_1,\ldots,y_{n-1})}\\
&~= \frac{C''_{d,n}}{r\Det(y_1,\ldots,y_{n-1})}.
\end{align}
Also note that replacing $y$ by any point not on $\lin(y_1,\ldots,y_{n-1})$
will actually increase the expectation (the distribution of the values of $t$
within the $r$-neighborhood gets shifted away from 0). Since the set of points
$(y_1,\ldots,y_{n-1})\in\prod_{j=1}^{n-1}B(x_j,r)$ that are not a linearly
independent $(n-1)$-tuple is of measure zero, we have the following.
\begin{align*}
D_r(x_1,\dots,x_n)&\leq qr\, D_r(x_1,\dots,x_{n-1})\\
&=~qr\,\mb{E}_{y_j\in B_r(x_j)}  \mb{E}_{z\in B_r(x_n)}\left(\frac1{\Det(y_1,\ldots,y_{n-1},z)}\right)\\
&\leq~qr\,\mb{E}_{y_j\in B_r(x_j)}  \mb{E}_{z\in B_r(y_{n-1})}\left(\frac1{\Det(y_1,\ldots,y_{n-1},z)}\right)\\
&\le qr\,\mb{E}_{y_j\in B_r(x_j)} \left(\frac{C''_{d,n}}{r\Det(y_1,\ldots,y_{n-1})}\right)\\
&=~ q\, \mb{E}_{y_j\in B_r(x_j)} \left(\frac{C''_{d,n}}{\Det(y_1,\ldots,y_{n-1})}\right)\\
&\le~q\;\frac{C_{n-1,d}\,C''_{d,n}}{\Det(x_1,\ldots,x_{n-1})}
\end{align*}
(we have used \eqref{eq:singular_avg} in the fourth step and the induction
hypothesis in the last). This implies the inequality in the lemma.
\end{proof}

The following lemma connects these reciprocals of determinants to the
orthogonality graph.

\begin{lemma}\label{LEM:N-LOOSE}
Let $0\le n<d$, and let $(x_1,\dots,x_n)$ be obtained by selecting a random
point $y$ on $S^{d-1}$, then selecting $n$ independent random points
$x_1,\dots,x_n$ from the ``equator'' $y^\perp\cap S^{d-1}$, then forgetting
$y$. Then the density function of $(x_1,\dots,x_n)$ is
\begin{equation}\label{EQ:SNSD}
s_{d,n}(x_1,\dots,x_n) = \frac{A_{d-1}^{n-1}A_{d-n-1}}{A_{d-2}^n}\, \frac1{\Det(x_1,\dots,x_n)}
\end{equation}
\end{lemma}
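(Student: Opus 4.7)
The plan is to realise the two-step sampling as a weak limit of conditional distributions on $(S^{d-1})^{n+1}$, and then extract the marginal density of $(x_1,\ldots,x_n)$ via a coarea computation. First I would observe that the two-step sampling coincides with the weak $\varepsilon\downarrow 0$ limit of the conditional law of $(y,x_1,\ldots,x_n)\sim d\pi^{n+1}$ given $|y\cdot x_i|<\varepsilon$ for all $i$, where $d\pi=dy/A_{d-1}$. Indeed, by rotational symmetry $P(|y\cdot x_i|<\varepsilon\mid y)$ is independent of $y$, so the $y$-marginal of the conditional law is still uniform; and given $y$, each $x_i$ becomes uniform on the band $\{x\in S^{d-1}:|y\cdot x|<\varepsilon\}$, which concentrates on the equator $y^\perp\cap S^{d-1}$ as $\varepsilon\to 0$. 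By Bayes' formula the marginal density of $(x_1,\ldots,x_n)$ with respect to $d\pi^n$ is then
\[
  s_{d,n}(x_1,\ldots,x_n)=\lim_{\varepsilon\downarrow 0}\frac{P\big(|y\cdot x_i|<\varepsilon\ \forall i\,\big|\,x_1,\ldots,x_n\big)}{P\big(|y\cdot x_i|<\varepsilon\ \forall i\big)}.
\]

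Next I would evaluate the numerator using the coarea formula applied to the map $\Phi:S^{d-1}\to\R^n$, $\Phi(y)=(y\cdot x_i)_{i=1}^n$. The spherical gradient of the $i$-th component is $x_i-(x_i\cdot y)y$, which on the fibre $\Phi^{-1}(0)=X^\perp\cap S^{d-1}$ (with $X:=\lin(x_1,\ldots,x_n)$) reduces to $x_i$. Hence the Gramian of the gradients there equals the full Gram matrix of the $x_i$'s, so the coarea Jacobian $J\Phi$ is constantly $\Det(x_1,\ldots,x_n)$ on that fibre. Since the fibre is a $(d-n-1)$-sphere of surface area $A_{d-n-1}$, the Lebesgue density of $\Phi_* d\pi$ at the origin of $\R^n$ equals $A_{d-n-1}/(A_{d-1}\Det(x_1,\ldots,x_n))$, and the numerator is asymptotic to $(2\varepsilon)^n A_{d-n-1}/(A_{d-1}\Det(x_1,\ldots,x_n))$ as $\varepsilon\to 0$.

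Finally I would obtain the denominator by averaging the numerator asymptotic against $d\pi^n$: plugging in $\E_\pi(1/\Det)=A_{d-2}^n/(A_{d-1}^{n-1}A_{d-n-1})$ from Lemma~\ref{LEM:XEXPECT} with $p=-1$ (legitimate since $n<d$), the denominator becomes asymptotic to $(2\varepsilon)^n A_{d-2}^n/A_{d-1}^n$. Dividing yields exactly the claimed formula. The hard part will be justifying the $\varepsilon\to 0$ passage uniformly enough to promote the pointwise asymptotics to an identity of densities, particularly near the locus where $x_1,\ldots,x_n$ become linearly dependent; the local-average bound of Lemma~\ref{LEM:DETAVERAGE}, combined with the global integrability $1/\Det\in L^1(d\pi^n)$ provided by Lemma~\ref{LEM:XEXPECT}, should supply the domination needed for a dominated-convergence argument.
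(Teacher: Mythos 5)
Your argument is correct in substance but follows a genuinely different route from the paper. The paper proceeds by induction on $n$: having placed $x_1,\dots,x_{n-1}$, it compares the angle density of the $n$-th point drawn from a random hyperplane through $\lin(x_1,\dots,x_{n-1})$, proportional to $(\cos\theta)^{n-2}(\sin\theta)^{d-n-1}$, with that of a uniform point, proportional to $(\cos\theta)^{n-2}(\sin\theta)^{d-n}$; the ratio $1/\sin\theta=\Det(x_1,\dots,x_{n-1})/\Det(x_1,\dots,x_n)$ telescopes, and the constant comes from Lemma~\ref{LEM:XEXPECT}. That route is exact at every step and needs no limit interchange. You instead realize the law as the $\eps\to0$ limit of conditioning i.i.d.\ uniform points on the band event $\{|y\cdot x_i|<\eps\ \forall i\}$ and compute the conditional probability by the coarea formula, which has the merit of explaining structurally why $\Det(x_1,\dots,x_n)$ appears (it is the coarea Jacobian of $y\mapsto(y\cdot x_i)_i$ on the fibre $\lin(x_1,\dots,x_n)^\perp\cap S^{d-1}$, of area $A_{d-n-1}$), and of being non-inductive and close in spirit to the smoothing arguments the paper uses later (Lemmas~\ref{LEM:DETAVERAGE} and~\ref{applem1}). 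The price is the limit-interchange step you flag. Two remarks to close it cleanly: first, you do not need to average the numerator asymptotics at all, since the denominator is exact, namely $\Pr(\forall i:|y\cdot x_i|<\eps)=\Pr(|y\cdot x_1|<\eps)^n\sim(2\eps A_{d-2}/A_{d-1})^n$ by conditional independence given $y$; second, for the domination one needs a bound of the form $\Pr(\forall i:|y\cdot x_i|<\eps\mid x_1,\dots,x_n)\le C\,\eps^n/\Det(x_1,\dots,x_n)$ uniformly in $\eps$, which does not follow verbatim from Lemma~\ref{LEM:DETAVERAGE}; it can be obtained either by projecting $y$ onto $\lin(x_1,\dots,x_n)$ (the slab intersection there has $n$-volume $(2\eps)^n/\Det(x_1,\dots,x_n)$ and the pushforward of $\pi$ has density $\propto(1-|v|^2)^{(d-n-2)/2}$, bounded for $n\le d-2$ and still integrable for $n=d-1$), or by dominating the normalized band kernel pointwise by a constant multiple of $W_{2\eps}$ and then invoking Lemma~\ref{applem1} together with Lemma~\ref{LEM:DETAVERAGE}. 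With either of these, dominated convergence (the dominating function $C/\Det$ being integrable by Lemma~\ref{LEM:XEXPECT} with $p=-1$) upgrades your pointwise asymptotics to the stated identity of densities, and your constants check out against \eqref{EQ:SNSD}.
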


As remarked above in a different language (cf.~Lemma \ref{LEM:XEXPECT}),
$s_{d,n}\in L_p(S^{d-1},\pi)$ ($p\ge 1$) if and only if $p< d-n+1$.

\begin{proof}
Similarly as in the proof of Lemma \ref{LEM:XEXPECT}, we use induction on $n$.
For $n\le 1$ the assertion is trivial. Let $n\ge 2$, and let $L$ denote the
linear space spanned by $x_1,\dots,x_{n-1}$. With probability $1$,
$\dim(L)=n-1$. Clearly $L$ is uniformly distributed among all
$(n-1)$-dimensional subspaces of $y^\perp$, and so $y^\perp$ is uniformly
distributed among all linear hyperplanes containing $L$. So we construct $x_n$
by (a) choosing a random $(n-1)$-dimensional subspace $L$, (b) choosing a
random hyperplane $H$ containing $L$, and (c) choosing a random point from
$H\cap S^{d-1}$. Let us fix $L$, and let $\theta$ be the angle between $x_n$
and $L$. It is clear by symmetry that the density of $x_n$ depends only on
$\theta$.

For every choice of $H$, the distribution of $\theta$ is the same, and the
density of this distribution (in $[0,\pi]$ is proportional to
$(\cos\theta)^{n-2}(\sin\theta)^{d-n-1}$, as we have seen in the proof of Lemma
\ref{LEM:XEXPECT}. By the same argument, for a uniform random point $x'_n\in
S^{d-1}$, the angle $\theta'$ between $x_n'$ and $L$ has density proportional
to $(\cos\theta)^{n-2}(\sin\theta)^{d-n}$. It follows that the density of
$x_n$, relative to the uniform distribution on $S^{d-1}$, is proportional to
\[
\frac{(\cos\theta)^{n-2}(\sin\theta)^{d-n-1}}{(\cos\theta)^{n-2}(\sin\theta)^{d-n}}
=\frac1{\sin\theta} = \frac{\Det(x_1,\dots,x_{n-1})}{\Det(x_1,\dots,x_n)}.
\]
Since the density of $(x_1,\dots,x_{n-1})$ is proportional to
$1/\Det(x_1,\dots,x_{n-1})$ by the induction hypothesis, it follows that the
density of $(x_1,\dots,x_n)$ is proportional to
\[
\frac1{\Det(x_1,\dots,x_{n-1})}\,\frac{\Det(x_1,\dots,x_{n-1})}{\Det(x_1,\dots,x_n)} =
\frac1{\Det(x_1,\dots,x_n)}.
\]
The coefficient of proportionality can be computed by Lemma \ref{LEM:XEXPECT}.
\end{proof}

\subsection{Conditioning and Markov property}\label{SEC:MARKOV}

Let $V$ be a finite set, and let $(\Omega,\AA)$ be a measurable space (for most
of the paper, $\Omega=S^{d-1}$, and $\AA$ is the sigma-algebra of Borel sets).
Let $\Omega^{*V}$ denote the set of partial mappings $z:~ S\to J$, $S\subseteq
V$. Let $\varphi$ be a measure on $(\Omega^V,\AA^V)$, and let $\varphi^S$
denote the marginal of $\varphi$ on $(\Omega^S,\AA^S)$.

A family $(\varphi_z:~z\in \Omega^{*V})$ is a {\it conditioning of $\varphi$},
if

\medskip

(C1) for every $S\subseteq V$ and $z\in J^S$, $\varphi_z$ is a measure on
$(\Omega^{V\setminus S},\AA^{V\setminus S})$;

\medskip

(C2) for every $S\subseteq V$ and $B\in\AA^{V\setminus S}$, the value
$\varphi_z(B)$ is a measurable function of $z\in \Omega^S$;

\medskip

(C3) for every $T\subseteq S\subseteq V$, for every $z\in \Omega^T$, $B\in
\AA^{S\setminus T}$ and $C\in\AA^{V\setminus S}$,
\[
\varphi_z(B\times C)=\intl_{B} \varphi_{zy}(C) \,d\varphi_z^{S\setminus T}(y).
\]

As extreme cases, $\varphi_\emptyset=\varphi$, and $\varphi_z=\delta_z$ (the
Dirac measure) for $z\in \Omega^V$.

For a fixed set $S\subseteq V$, the family $\{\varphi_z:~z\in \Omega^S\}$ is a
disintegration of the measure $\varphi$ according to the marginal $\varphi^S$.
The conditioning as defined above means a bit more: first, it is well-defined
for all $z\in \Omega^S$, not just almost everywhere; second, it is defined
simultaneously for all marginals $\varphi^S$, with compatibility condition
(C3).

If $V$ is the set of nodes of a graph $G$, we can define an important
probabilistic property of conditionings. A conditioning $(\varphi_z)$ is {\it
Markovian}, if for every $S\subseteq V$ and $z\in \Omega^S$, the measure
$\varphi_z$ is multiplicative over the connected components of $G\setminus S$.
If, in particular, $\varphi$ is a probability distribution, and $G\setminus S$
has connected components $G_1,\dots,G_r$, then $\varphi_z|_{G_1},\dots,
\varphi_z|_{G_1}$ are independent.

\subsection{Graphons}\label{SSEC:GRAPHONS}

We conclude this section with a brief survey of related constructions for
graphons, partly as analogues for the orthogonality graph (which is not a
graphon), but also for later reference. A {\it graphon} is a symmetric
integrable function $W:~\Omega^2\to\R_+$, where $(\Omega,\AA,\pi)$ is a
standard Borel probability space. In the theory of dense graph limits, graphons
are bounded by $1$, but since then much of the theory has been extended to the
unbounded case.

Given a graphon $W$ and a finite simple graph $G=(V,E)$, we define a function
$W^G:~\Omega^V\to\R_+$ for $x=(x_i:~i\in V)$ by
\begin{equation}\label{EQ:WG-FUNC}
W^G(x) = \prod_{ij\in E} W(x_i,x_j).
\end{equation}
Sometimes it will be convenient to use this notation for a single edge $e=ij\in
E$: $W^e(x)=W(x_i,x_j)$. The function $W^G$ defines a measure $\eta_W^G$ as its
density function:
\[
\eta_W^G(A) = \intl_A W^G(x)\,d\pi^V(x)\qquad(A\in\AA^V),
\]
and the {\it subgraph density}
\begin{equation}\label{EQ:WG-DENS}
t(G,W) = \eta_W^G(\Omega^V) = \intl_{\Omega^V} W^G(x)\,d\pi^V(x).
\end{equation}
For bounded graphons this is always finite, but in general, it may be infinite.

We call a graphon {\it $1$-regular}, if $\int_\Omega W(x,y)\,d\pi(y) =1$ for
every $x$. For a $1$-regular graphon, the function $W(x,.)$ can be considered
as the density function of a probability distribution $\nu_x$ on
$(\Omega,\AA)$, which defines a step from $x\in \Omega$ of a time-reversible
Markov chain. Let us make $n$ independent steps, each from the same point $x$,
to points $x_1,\dots,x_n$. The joint distribution of $(x,x_1,\dots,x_n)$ has
density function $W(x,x_1)\dots W(x,x_n)$, and if $x$ is chosen randomly from
$\pi$, then the analogue of Lemma \ref{LEM:N-LOOSE} says that the joint
distribution of $(x_1,\dots,x_n)$ has density function
\begin{equation}\label{EQ:KSTEP-GRAPHON}
s(x_1,\dots,x_n)=\intl_\Omega W(x,x_1)\dots W(x,x_n)\,d\pi(x).
\end{equation}

A Markovian conditioning of $\eta_W^G$ can be constructed as the family of
measures $\{\eta^z:~z\in \Omega^S,\,S\subseteq V\}$, with density functions
\[
t_z(G,W) = \intl_{\Omega^{V\setminus S}} W^G(y,z)\,d\pi^{V\setminus S}(y).
\]

\section{The main construction}

\subsection{Swapping lemmas}

For the next (main) lemma, we need some geometric preparation. We fix the
dimension $d$. Let $L_i$ ($i=1,2$) be linear subspaces of $\R^d$ of dimension
$d_i\ge 2$. Let $x_i\in L_i\cap S^{d-1}$, and let $\wh{x}_1$ and $\wh{x}_2$ be
the orthogonal projections of $x_1$ onto $L_2$ and of $x_2$ onto $L_1$,
respectively. Define
\[
\Omega=\{(x_1,x_2)\in L_1\times L_2:~x_1\perp x_2,~,\wh{x}_1,\wh{x}_2\not=0\}.
\]

\begin{lemma}\label{LEM:SWAP}
Let $X_i$ be a random vector from $L_i\cap S^{d-1}$, and let $X_1'$ and $X_2'$
be random vectors from the spheres $L_1\cap S^{d-1}\cap X_2^\perp$ and $L_2\cap
S^{d-1}\cap X_1^\perp$, respectively. Let $\rho_1$ and $\rho_2$ be the
distributions of $(X_1,X_2')$ and $(X_1',X_2)$, respectively. Then $\rho_1$ and
$\rho_2$ are mutually absolutely continuous on $\Omega$, and
\[
\frac{d\rho_2}{d\rho_1}(x_1,x_2) =\frac{A_{d_1-1}A_{d_2-2}}{A_{d_2-1}A_{d_1-2}}\,\frac{|\wh{x}_2|}{|\wh{x}_1|}
= \frac{A_{d_1-2}}{A_{d_2-2}}\,\frac{|\wh{x}_2|}{|\wh{x}_1|}.
\]
\end{lemma}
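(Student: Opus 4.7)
The plan is to express both $\rho_1$ and $\rho_2$ as densities with respect to a common formal reference measure on $\Omega$, and then to read off the Radon--Nikodym derivative as the ratio of these densities. The natural reference is the ``uniform measure on $S_1\times S_2$ conditioned on $\langle x_1,x_2\rangle=0$'', which I would treat informally as $\delta(\langle x_1,x_2\rangle)\,d\sigma_{S_1}(x_1)\otimes d\sigma_{S_2}(x_2)$, where $S_i:=L_i\cap S^{d-1}$, and justify rigorously via the coarea formula.

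First I would write $\rho_1$ in product form as
\[
d\rho_1(x_1,x_2) = \frac{1}{A_{d_1-1}A_{d_2-2}}\,d\sigma_{S_1}(x_1)\otimes d\sigma_{S_2(x_1)}(x_2),
\]
where $S_2(x_1):=L_2\cap x_1^\perp\cap S^{d-1}$ is the $(d_2-2)$-sphere of surface area $A_{d_2-2}$ on which $X_2'$ is uniform. The critical step is to relate $d\sigma_{S_2(x_1)}$ to $d\sigma_{S_2}$. Since $S_2(x_1)$ is the zero level set of the smooth function $\phi_{x_1}(y):=\langle x_1,y\rangle$ on $S_2$, the coarea formula gives
\[
d\sigma_{S_2(x_1)}(x_2) = |\nabla_{S_2}\phi_{x_1}(x_2)|\;\delta(\langle x_1,x_2\rangle)\,d\sigma_{S_2}(x_2).
\]
The tangential gradient of $\phi_{x_1}$ on $S_2$ at $y\in S_2$ is the projection of $\wh{x}_1$ (the ambient gradient inside $L_2$) onto $T_yS_2$. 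Evaluated at $y=x_2$, the identity $\langle\wh{x}_1,x_2\rangle=\langle x_1,x_2\rangle=0$ shows that $\wh{x}_1$ itself already lies in $T_{x_2}S_2$, so $|\nabla_{S_2}\phi_{x_1}(x_2)| = |\wh{x}_1|$.

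Substituting, the density of $\rho_1$ with respect to the formal reference comes out to $|\wh{x}_1|/(A_{d_1-1}A_{d_2-2})$. The symmetric computation, with the roles of the two subspaces interchanged, yields density $|\wh{x}_2|/(A_{d_2-1}A_{d_1-2})$ for $\rho_2$. Dividing these gives the first displayed formula, and mutual absolute continuity on $\Omega$ is then immediate from the strict positivity of $|\wh{x}_1|$ and $|\wh{x}_2|$ there.

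The main obstacle is making the delta-function manipulation rigorous. The clean way is to test against a continuous compactly supported $F$ on $\Omega$: expand $\int F\,d\rho_1$ as the iterated integral from the definition of $\rho_1$, then apply the classical coarea theorem to $\phi_{x_1}:S_2\to\R$ for each fixed $x_1$ to produce the factor $|\wh{x}_1|$ and convert back to an integral against $d\sigma_{S_2(x_1)}$. One should also observe that the event $\wh{x}_i=0$ has $\sigma_{S_i}$-measure zero, so discarding it and restricting attention to $\Omega$ loses no mass for either measure.
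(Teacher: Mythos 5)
Your argument is correct, and it takes a genuinely different route from the paper's. The paper proves the lemma by an infinitesimal box computation: it takes small cubes $B_i$ in the tangent spaces at $x_i$, compares the exact slice $H(y_1)=\{y_2\in L_2: y_1\T y_2=0\}$ with its affine linearization $H'(y_1)$ (Claim \ref{CLAIM:HH}), estimates for what fraction of $y_1\in B_1$ the slice actually meets $B_2$ --- this is where the factor $\min(|\wh{x}_1|,|\wh{x}_2|)$ over $|\wh{x}_2|$ resp.\ $|\wh{x}_1|$ enters --- and then takes the ratio of the two box probabilities as $\eps\to0$. You instead exhibit both laws as densities with respect to a common orthogonality-conditioned reference measure and compute those densities by the coarea formula; your key observation, that the tangential gradient of $y\mapsto\langle x_1,y\rangle$ on $L_2\cap S^{d-1}$ at a point of the zero level set is exactly $\wh{x}_1$ because $\langle\wh{x}_1,x_2\rangle=\langle x_1,x_2\rangle=0$, is precisely the geometric fact the paper extracts when it rewrites the equation of $H'$ in terms of $\wh{x}_1$ and $\wh{x}_2$. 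Your normalizing constants ($A_{d_1-1}A_{d_2-2}$ versus $A_{d_2-1}A_{d_1-2}$) and the resulting ratio agree with the paper's first displayed expression. What your route buys is conceptual economy: no $O(\eps^2)$ bookkeeping, and mutual absolute continuity on $\Omega$ falls out immediately from the strict positivity of both densities there, whereas the paper defers that point to citations and to its asymptotic computation. The price is that the delta-function step must be made rigorous exactly as you sketch: approximate the level constraint by $\tfrac1{2\eps}\one_{|\langle x_1,x_2\rangle|<\eps}$, use continuity of the level-set integrals in the level parameter, and dominated convergence to interchange with the integration over $x_1$; for test functions supported in compact subsets of $\Omega$ (where $|\wh{x}_1|,|\wh{x}_2|$ are bounded away from $0$) this is routine. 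Two small remarks: your parenthetical claim that restricting to $\Omega$ loses no mass uses implicitly that the configuration is nondegenerate (e.g.\ $L_1\not\perp L_2$), but it is not needed, since the lemma only concerns the restrictions to $\Omega$; and, like the paper's own proof, your computation establishes only the first expression for $d\rho_2/d\rho_1$ --- the second equality in the statement, with $A_{d_1-2}/A_{d_2-2}$, does not follow from either argument (only the first form is used later, in Lemma \ref{LEM:SWAP2}).
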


\begin{proof}
The first assertion follows by the considerations in \cite{LSS,GeomBook}, and
also from the computations below.

For a nonzero vector $u\in\R^d$, we set $u^0=u/|u|$. Let $(x_1,x_2)\in\Omega$,
let $u_1=x_1, u_2=\wh{x}_2^0, u_3,\dots,u_{d_1}$ be an orthonormal basis in
$L_1$, and select an orthonormal basis $v_1,\dots, v_{d_2}$ in $L_2$
analogously. Let $\|.\|_\infty$ denote the $\ell_\infty$ norm on each of $L_1$
and $L_2$ in these bases. Let $T_i$ be the tangent space of the unit sphere
$U_i$ of $L_i$ at $x_i$ (as an affine subspace of $L_i$ containing $x_i$). Fix
an $\eps>0$. Let $B_i$ be the cube $x\in T_i:~\|x-x_i\|_\infty\le \eps$, and
let $B_i'$ denote the projection of $B_i$ onto the sphere $U_i$ from the
origin.

For $y_1\in B_1$, consider the linear subspace $H=H(y_1)= \{y_2\in L_2:~y_1\T
y_2=0\}$ and the affine subspace $H'=H'(y_1) = \{y_2\in L_2:~x_1\T y_2 + x_2\T
y_1=0\}$. Note that the equation defining $H'(y_1)$ can be written as
$H'(y_1)=\{y\in L_2:~\wh x_1\T y + \wh x_2\T y_1=0\}$, since $x_1-\wh x_1\perp
y$ and $x_2-\wh x_2\perp y_1$. Furthermore, $x_2-\wh x_2\perp x_1$ by the
orthogonality of the projection, so $\wh{x}_2 = x_2 - (x_2-\wh x_2) \perp x_1$.
We claim that these two subspaces are almost the same:

\begin{claim}\label{CLAIM:HH}
There is a constant $C>0$ independent of $\eps$ such that $d(y_2,H')<C\eps^2$
for every $y_2\in H\cap B_2$, and $d(y_1,H')<C\eps^2$ for every $y_1\in H\cap
B_2$.
\end{claim}

We use the identity
\begin{equation*}\label{EQ:YYEE}
y_1\T y_2 - (x_1\T y_2 + x_2\T y_1) = (y_1-x_1)\T(y_2-x_2)
\end{equation*}
(all asymptotic statements concern $\eps\to0$). Here $\|y_i-x_i\| = O(\eps)$,
so the right hand side is $O(\eps^2)$. Up to sign, the first term on the left
is $\|y_1\|\,d(y_2,H)$, while the second term is $\|\wh x_1\|d(y_2,H')$. If
either one of these is $0$, the other one is $O(\eps^2)$.

Let $X_i$ and $X_i'$ be generated as in the statement of the Lemma. Then
\[
\Pr\big((X_1,X'_2)\in B'_1\times B'_2\big) = \Pr(X'_2\in B'_2~|~ X_1\in B'_1) \Pr(X_1\in B'_1).
\]
Here
\[
\Pr(X_1\in B'_1) = \frac{\lambda_{d_1-1}(B_1')}{A_{d_1-1}} \sim \frac{\lambda_{d_1-1}(B_1)}{A_{d_1-1}}
= \frac{(2\eps)^{d_1-1}}{A_{d_1-1}}.
\]
(where $\lambda_k$ denotes the $k$-dimensional volume in $\R^d$). The first
factor is more complicated. For a fixed $y_1\in B_1$, we have
\[
\Pr(X'_2\in B'_2~|~ X_1=y^0_1) = \frac{\lambda_{d_2-2}(B_2'\cap H(y_1))}{A_{d_2-2}}
\sim \frac{\lambda_{d_2-2}(B_2\cap H(y_1))}{A_{d_2-2}}.
\]
We want to compare $B_2\cap H(y_1)$ and $B_2\cap H'(y_1)$. The hyperplane
$H'(y_1)$ in $L_2$ is orthogonal to the edge $v_2$ of the cube $B_2$, and hence
it either avoids $B_2$ or intersects it in a set isometric with a facet. Using
the fact that $H(y_1)$ and $H'(y_1)$ are very close, we get that there is a
$C>0$, independent of $\eps$, such that
\begin{equation}\label{EQ:HH1}
\text{if $d(x_2,H'(y_1))<\eps-C\eps^2$, then $\lambda_{d_2-2}(B_2\cap H(y_1))\sim
(2\eps)^{d_2-2}$,}
\end{equation}
and
\begin{equation}\label{EQ:HH2}
\text{if $d(x_2,H'(y_1))>\eps+C\eps^2$, then $\lambda_{d_2-2}(B_2\cap H(y_1))=0$.}
\end{equation}
In the modified equation defining $H'$, the coefficient vector $\wh x_1\in
L_2$, and hence
\[
d(x_2,H'(y_1)) = \frac1{|\wh x_1|} \Big|\wh x_1\T
x_2 + \wh x_2\T y_1\Big| =  \frac{|\wh x_2|}{|\wh x_1|}|u_1\T y_1|.
\]
Hence
\[
\Pr(X_2'\in B'_2~|~X_1=y_1^0) \sim
  \begin{cases}
    \displaystyle\frac{(2\eps)^{d_2-2}}{A_{d_2-2}}, & \text{if $|u_1\T y_1| < (\eps-C\eps^2)|\wh{x}_1|/ |\wh{x}_2|$}, \\
    0, & \text{if $|u_1\T y_1| > (\eps+C\eps^2)|\wh{x}_1|/ |\wh{x}_2|$},\\
    O(\eps^{d_2-2}), & \text{otherwise}.
  \end{cases}
\]
The first option applies for a fraction of $\min\{1, (1-C\eps)
|\wh{x}_2|/|\wh{x}_1|\}$ of points of $B_1$. The third possibility occurs for a
negligible fraction of the points of $B_1$. Since the distribution of $X_1$ in
$B_1$ is almost uniform, we get
\[
\Pr\big((X_1,X'_2)\in B_1\times B_2\big)\sim \frac{(2\eps)^{d_1-1}}{A_{d_1-1}}  \frac{(2\eps)^{d_2-2}}{A_{d_2-2}}
\frac{\min(|\wh{x}_1|,|\wh{x}_2|)}{|\wh{x}_2|}.
\]
Similarly,
\[
\Pr\big((X'_1,X_2)\in B_1\times B_2\big)\sim \frac{(2\eps)^{d_2-1}}{A_{d_2-1}}  \frac{(2\eps)^{d_1-2}}{A_{d_1-2}}
\frac{\min(|\wh{x}_1|,|\wh{x}_2|)}{|\wh{x}_1|}.
\]
and so
\[
\frac{d\rho_2}{d\rho_1}(x_1,x_2) \sim \frac{\Pr\big((X_1',X_2)\in B_1\times B_2\big)}{\Pr\big((X_1,X'_2)\in B_1\times B_2\big)}
\sim\frac{A_{d_1-1}A_{d_2-2}}{A_{d_2-1}A_{d_1-2}} \frac{|\wh{x}_2|}{|\wh{x}_1|}.
\]
Letting $\eps\to0$, the lemma follows.
\end{proof}

Let $p:~V\to [n]$ be a bijection, defining an ordering of the nodes of a graph
$G=(V,E)$, let $N_p(u)=\{w:~p(w)<p(u),\ uw\in E\}$, and let $d_p(u) =
|N_p(u)|$. To simplify notation, for a map $x:~V\to\R^d$, we write
$x_p(u)=x|_{N_p(u)}$.

We recall more formally the construction of an ortho-homomorphism from the
Introduction. Let $v\in V$, $S=\{u\in V:~p(u)<p(v)\}$, and suppose that we
already have an ortho-homomorphism $(x_u:~u\in S)$ in general position for the
subgraph $G[S]$. The vectors in $S^{d-1}$ orthogonal to every $x_i$ with $i\in
N_p(v)$ form a sphere of dimension at least $(d-1)-d_p(v)\ge 0$; we choose a
vector $x_v$ on this sphere randomly. Repeating this until $x_v$ is defined for
every $v\in V$, we get an ortho-homomorphism, which we call a {\it random
sequential ortho-homomorphism} of $G$. Let $\rho_p$ be the distribution of this
ortho-homomorphism. By \cite{LSS}, this ortho-homomorphism is in general
position almost surely. The main step in the proof was that flipping two
consecutive nodes in the ordering, we may get a possibly different distribution
on ortho-homomorphisms, but this new new distribution is absolutely continuous
with respect to the previous one. In the next lemma, we give an explicit
formula showing this.

\begin{lemma}\label{LEM:SWAP2}
Let $r$ be obtained from the ordering $p$ by flipping two consecutive adjacent
nodes $u$ and $v$, where $p(v)=p(u)+1$. Then
\[
\frac{d\rho_r}{d\rho_p}(x) =
\frac{A_{d-d_p(u)-1}A_{d-d_p(v)-1}}{A_{d-d_r(u)-1}A_{d-d_r(v)-1}}\,
\frac{\Det(x_r(u))\Det(x_r(v))}{\Det(x_p(u))\Det(x_p(v))}.
\]
\end{lemma}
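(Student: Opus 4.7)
The plan is to reduce everything to the joint distribution of $(x_u, x_v)$ conditioned on the positions of the previously-placed nodes, and then apply Lemma~\ref{LEM:SWAP} directly.

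First I would observe that only the two steps placing $u$ and $v$ differ between the $p$- and $r$-constructions. Every node $w$ with $p(w)<p(u)$ is placed identically in both orderings, and every node $w$ with $p(w)>p(v)$ has the same set of already-placed neighbors in both orderings (since the set of nodes placed before it is the same). Consequently, conditioning on $x|_S$ for $S=\{w:p(w)<p(u)\}$, the Radon--Nikodym derivative $d\rho_r/d\rho_p$ equals the corresponding derivative between the conditional laws of $(x_u,x_v)$. This reduces the lemma to a purely local claim about a two-step process.

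Next I would match this local claim to Lemma~\ref{LEM:SWAP}. Let $L_1$ be the orthogonal complement of the span of the images of $N_p(u)=N_r(u)\setminus\{v\}$, and $L_2$ be the orthogonal complement of the span of the images of $N_r(v)=N_p(v)\setminus\{u\}$, with $d_1=\dim L_1=d-d_p(u)$ and $d_2=\dim L_2=d-d_r(v)$; both are $\ge 2$ because $r$ and $p$ admit a next node, forcing $d_p(u)\le d-2$ and $d_r(v)\le d-2$. Under the $p$-ordering, $x_u$ is uniform on $L_1\cap S^{d-1}$ and then $x_v$ is uniform on $L_2\cap x_u^\perp\cap S^{d-1}$; under the $r$-ordering, the roles are reversed. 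These are exactly the measures $\rho_1$ and $\rho_2$ of Lemma~\ref{LEM:SWAP}, so that lemma gives
\[
\frac{d\rho_r}{d\rho_p}(x) = \frac{A_{d_1-1}A_{d_2-2}}{A_{d_2-1}A_{d_1-2}}\,\frac{|\wh{x}_v|}{|\wh{x}_u|},
\]
where $\wh{x}_u$ is the projection of $x_u$ onto $L_2$ and $\wh{x}_v$ the projection of $x_v$ onto $L_1$. Substituting the dimensions and using $d_p(u)=d_r(u)-1$ and $d_p(v)=d_r(v)+1$ matches the $A$-prefactor in the statement.

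Finally I would rewrite the $|\wh x|$-ratio in terms of generalized determinants. Since $\wh{x}_u$ is the component of $x_u$ orthogonal to $\lin\{x_w:w\in N_r(v)\}$, its length equals the distance from $x_u$ to that subspace, i.e.
\[
|\wh{x}_u| = \frac{\Det(\{x_u\}\cup\{x_w:w\in N_r(v)\})}{\Det(x_r(v))} = \frac{\Det(x_p(v))}{\Det(x_r(v))},
\]
using $N_p(v)=N_r(v)\cup\{u\}$. Symmetrically $|\wh{x}_v|=\Det(x_r(u))/\Det(x_p(u))$, so
\[
\frac{|\wh x_v|}{|\wh x_u|} = \frac{\Det(x_r(u))\Det(x_r(v))}{\Det(x_p(u))\Det(x_p(v))},
\]
which completes the formula.

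The whole argument is essentially bookkeeping after Lemma~\ref{LEM:SWAP}; the one place that requires care is matching the two neighborhoods $N_p(v),N_r(v)$ and $N_p(u),N_r(u)$ correctly (they differ precisely by the element $u$ or $v$ respectively, which is exactly what turns the distance-to-hyperplane expression into the telescoping ratio of determinants). Verifying the dimension bounds $d_1,d_2\ge 2$ and that the projections $\wh{x}_1,\wh{x}_2$ are nonzero almost surely (so that we lie in the set $\Omega$ of Lemma~\ref{LEM:SWAP}) is where I would be most careful, but both follow from the general-position property of random sequential ortho-homomorphisms established in \cite{LSS}.
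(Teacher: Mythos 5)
Your proposal is correct and follows essentially the same route as the paper's proof: reduce to the conditional law of $(x_u,x_v)$ given the earlier nodes, apply Lemma~\ref{LEM:SWAP} with $L_1=N_p(u)^\perp$, $L_2=N_r(v)^\perp$, and identify the projection lengths $|\wh{x}_u|,|\wh{x}_v|$ with the telescoping determinant ratios exactly as in the paper. The only small quibble is your justification of $d_1,d_2\ge 2$: ``admitting a next node'' only gives $d_p(u)\le d-1$; the correct reason (as in the paper) is $d$-sparsity ($\deg\le d-1$) together with the fact that $v\notin N_p(u)$ and $u\notin N_r(v)$, which yields $d_p(u),d_r(v)\le d-2$.
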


\begin{proof}
We apply Lemma \ref{LEM:SWAP} with $L_1=N_p(u)^\perp$ and $L_2=N_r(v)^\perp$.
Then $\dim(L_1)=d-d_p(u)\ge d-\deg(u)+1\ge 2$ (since $v$ is not counted in
$d_p(u)$), and similarly $\dim(L_2)=d-d_r(v)\ge 2$. Also note that
$d_r(u)=d_p(u)+1$, $d_r(v)=d_p(v)-1$, and $d_r(w)=d_p(w)$ for every
$w\not=u,v$. Since $x_p(u)$ is a basis in $L_1^\perp$ and
$x_r(v)=x_p(v)\setminus\{u\}$ is a basis in $L_2^\perp$, the length of the
orthogonal projection of $x_u$ onto $L_2$ is
\[
\frac{\Det(x_r(v)\cup\{u\})}{\Det(x_r(v))} = \frac{\Det(x_p(v))}{\Det(x_r(v))}
\]
and the length of orthogonal projection of $x_v$ onto $L_1$ is
\[
\frac{\Det(x_p(u)\cup\{v\})}{\Det(x_p(u))} = \frac{\Det(x_r(u))}{\Det(x_p(u))}.
\]
This implies, in particular, that these projections are nonzero, and so we can
apply Lemma \ref{LEM:SWAP}. Since the order in which $x_u$ and $x_v$ are chosen
does not influence the distribution of $(x_w:~p(w)<p(u))$ and the distribution
of $(x_w:~p(w)>p(v))$ conditional on $(x_w:~p(w)\le p(v))$, we get that
\begin{align*}
\frac{d\rho_p}{d\rho_r}(x) &=
\frac{A_{d-d_p(u)-1}A_{d-d_p(v)-1}}{A_{d-d_r(v)-1}A_{d-d_r(u)-1}}
\left(\frac{\Det(x_r(u))}{\Det(x_p(u))}\left/\frac{\Det(x_p(v))}{\Det(x_r(v))}
\right.\right),
\end{align*}
proving the lemma.
\end{proof}

\subsection{Order-independent measure}\label{SSEC:ORD-INDEP}

Let $p:~V\to [n]$ be an ordering of the nodes of a graph $G$, and let
$x:~V\to\R^d$ be an orthogonal representation in general position. Using the
functions defined in \eqref{EQ:SNSD}, let
\begin{align}\label{EQ:F-DEF}
f_p(x) &=  \prod_{v\in V} s_{d,n}(x_p(v))
= \prod_{v\in V} \frac{A_{d-1}^{d_p(v)-1}A_{d-d_p(v)-1}}{A_{d-2}^{d_p(v)}}\,\frac{1}{\Det(x_p(v))}  \nonumber\\
&= \frac{A_{d-1}^{|E|-|V|}}{A_{d-2}^{|E|}} \prod_{v\in V} \frac{A_{d-d_p(v)-1}}{\Det(x_p(v))}.
\end{align}
We define a measure $\varphi_p= f_p\cdot\rho_p$ on $\Sigma$; more explicitly,
\begin{equation}\label{EQ:PHI-DEF}
\varphi_p(A) = \intl_A f_p\,d\rho_p.
\end{equation}
The following lemma is the main property of this construction.

\begin{lemma}\label{LEM:MAIN}
The measure $\varphi_p$ is independent of the ordering $p$.
\end{lemma}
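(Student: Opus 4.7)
The plan is to reduce the statement to a single transposition of consecutive positions, and then verify the identity using Lemma \ref{LEM:SWAP2}. Any two bijections $p,r:V\to[n]$ are connected by a chain of transpositions of consecutive positions, so by induction it suffices to prove $\varphi_p=\varphi_r$ whenever $r$ is obtained from $p$ by swapping two nodes $u,v$ with $p(v)=p(u)+1$. I would split on whether $uv\in E$.

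If $u$ and $v$ are not adjacent in $G$, then the only sets that could be altered by exchanging $u$ and $v$ are $N_p(u)$ and $N_p(v)$; but $v\notin N(u)$ and $u\notin N(v)$, so in fact $N_p(w)=N_r(w)$ for every $w\in V$. Hence $f_p=f_r$ pointwise by \eqref{EQ:F-DEF}. Moreover $\rho_p=\rho_r$, since in the sequential construction the choices of $x_u$ and $x_v$ are independent (neither vertex appears as a constraint for the other), so the order in which they are made is immaterial. Thus $\varphi_p=\varphi_r$ trivially in this case.

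The substantive case is $uv\in E$, where $N_r(u)=N_p(u)\cup\{v\}$ and $N_r(v)=N_p(v)\setminus\{u\}$, while $N_p(w)=N_r(w)$ for every other $w$. Plugging into \eqref{EQ:F-DEF} and cancelling all factors indexed by $w\neq u,v$ one obtains
\[
\frac{f_p(x)}{f_r(x)}=\frac{A_{d-d_p(u)-1}\,A_{d-d_p(v)-1}}{A_{d-d_r(u)-1}\,A_{d-d_r(v)-1}}\cdot\frac{\Det(x_r(u))\,\Det(x_r(v))}{\Det(x_p(u))\,\Det(x_p(v))},
\]
which by Lemma \ref{LEM:SWAP2} is precisely $d\rho_r/d\rho_p(x)$. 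Therefore $f_p\,d\rho_p=f_r\,d\rho_r$ as measures, i.e.\ $\varphi_p=\varphi_r$, completing the inductive step.

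The argument is entirely algebraic once Lemma \ref{LEM:SWAP2} is in hand; the main piece of bookkeeping is simply to verify the cancellation of the $|V|-2$ untouched factors and to match the remaining ratio against the Radon--Nikodym density. In fact, the choice of normalizing coefficients $A_{d-d_p(v)-1}/\Det(x_p(v))$ in \eqref{EQ:F-DEF}, coming from the density $s_{d,n}$ of Lemma \ref{LEM:N-LOOSE}, is exactly designed to absorb this Radon--Nikodym factor, so no further measure-theoretic work is required beyond the mutual absolute continuity already supplied by Lemma \ref{LEM:SWAP2}.
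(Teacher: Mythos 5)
Your proof is correct and follows essentially the same route as the paper: reduce to a swap of two consecutive nodes, dispose of the non-adjacent case by $f_p=f_r$ and $\rho_p=\rho_r$, and in the adjacent case observe that the ratio $f_p/f_r$ computed from \eqref{EQ:F-DEF} equals the Radon--Nikodym derivative $d\rho_r/d\rho_p$ of Lemma \ref{LEM:SWAP2}, so the two factors cancel and $f_p\,d\rho_p=f_r\,d\rho_r$. No gaps; this is the paper's argument.
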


By this lemma, we can denote $\varphi_p$ simply by $\varphi$ or $\varphi_G$. We
can think of $\varphi$ either as a measure on $\Sigma_G$, or as a measure on
$(\R^d)^V$ concentrated on $\Sigma_G$.

\begin{proof}
It suffices to check that if $r$ is the permutation obtained from $p$ by
swapping two consecutive nodes $u$ and $v$, then $\varphi_p=\varphi_r$. If $u$
and $v$ are nonadjacent, then this is trivial: $\rho_p=\rho_r$ and
$N_p(w)=N_r(w)$ for every node $w$, and hence $f_p=f_r$. So suppose that $uv\in
E$, and (say) $p(v)=p(u)+1$. Then
\[
\frac{d\varphi_p}{d\varphi_r}(x) = \frac{f_p(x)}{f_r(x)}\cdot\frac{d\rho_p}{d\rho_r}(x).
\]
Here
\[
\frac{f_p(x)}{f_r(x)}
= \frac{A_{d-d_p(u)-1}A_{d-d_p(v)-1}}{A_{d-d_r(v)-1}A_{d-d_r(u)-1}}\,
\frac{\Det(x_r(u))\Det(x_r(v))}{\Det(x_p(u))\Det(x_p(v))}
\]
by definition \eqref{EQ:F-DEF}. Substituting for the second factor from Lemma
\ref{LEM:SWAP2}, we get
\[
\frac{d\varphi_p}{d\varphi_r}(x) =1.
\]
Since this holds for all $x\in\Sigma$, this proves that $\varphi_r=\varphi_p$.
\end{proof}

The measure $\varphi$ is not always finite: in Section \ref{SEC:BIPARTITE} we
show that it is finite for every even cycle longer than $4$ in dimension $3$,
but infinite for the $3$-cube in dimension $4$. The measure is, however, finite
on compact subsets of $\Sigma_G$: the denominator in \eqref{EQ:F-DEF} remains
bounded away from zero. It is easy to see that $\varphi$ is a Radon measure.

We will also be interested in the {\it ortho-homomorphism number} (of graph $G$
in dimension $d$)
\begin{equation}\label{EQ:TGD-DEF}
t(G,d)=\varphi(\Sigma_G)=\frac{A_{d-1}^{|E|-|V|}}{A_{d-2}^{|E|}}
\intl_{(S^{d-1})^V}\prod_{v\in V} \frac{A_{d-d_p(v)-1}}{\Det(x_p(v))}\,d\pi^V.
\end{equation}
Let us note that $t(G,d)$ is positive for every $d$-sparse graph $G$; but it
may be infinite. If $t(G,d)$ is finite, then we can scale $\varphi$ to get a
probability measure on ortho-homomorphisms of $G$ into $S^{d-1}$.

The measure $\varphi$ has a natural conditioning. We can think of the
construction of the measure $\varphi$ as follows: Choose the vectors $x_i$ in
any given order according to the random sequential rule; whenever $x_i$ is
chosen, we multiply the density function by $s_{d,n}(x_p(v))$ (which is
determined by the previous nodes). An important consequence of this fact is
that if we stop when a subset $S$ of nodes has been processed, the vectors
selected and the density function computed up to this point define an
ortho-homomorphism from the measure $\varphi_{G[S]}$.

For $z\in \Sigma_{G[S]}$, we construct a measure $\varphi_z$ on
$\Sigma_{G[V\setminus S]}$ by continuing the random sequential choice.
Formally, let $p$ be any ordering of the nodes of $G$ starting with $S$; extend
$z$ to an ortho-homomorphism $x$ of $G$ in $\R^d$ by random sequential choice;
let $\rho_{z,p}$ be the distribution of this extension. Define the density
function the measure $\varphi_z$ on $\Sigma_{G[V\setminus S]}$ by
\begin{equation}\label{EQ:PHI-DEFP}
\varphi_z(A) = \intl_A \prod_{u\in V\setminus S}s_{d,n}(x_p(u))\,d\rho_{z,p}.
\end{equation}

\begin{lemma}\label{LEM:MARKOV}
The family $(\varphi_z:~z\in\Sigma_{G[S]},\,S\subseteq V)$ is a Markovian
conditioning of $\varphi$.
\end{lemma}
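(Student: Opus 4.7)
My plan is to verify the conditioning axioms (C1)--(C3) and then the Markov property, in each case by choosing a convenient ordering of $V$ and leveraging the order-independence established in Lemma \ref{LEM:MAIN}.

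First I would prove a conditional version of Lemma \ref{LEM:MAIN}: the measure $\varphi_z$ from \eqref{EQ:PHI-DEFP} does not depend on which extension $p$ of the ``$S$-first'' orderings is used. Any two such orderings differ by consecutive transpositions within $V\setminus S$, and Lemmas \ref{LEM:SWAP} and \ref{LEM:SWAP2} apply locally with the prefix $z$ frozen; the cancellation between the Radon--Nikodym derivative of $\rho_r$ with respect to $\rho_p$ and the ratio $f_p/f_r$ is exactly as in the proof of Lemma \ref{LEM:MAIN}. Axioms (C1) and (C2) then follow routinely from the measurability of each sequential draw.

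For (C3), given $T\subseteq S\subseteq V$ and $z\in\Sigma_{G[T]}$, I would use order-independence to compute $\varphi_z$ with an ordering that processes $T$, then $S\setminus T$, then $V\setminus S$. Under such an ordering both the density $\prod_{v\in V\setminus T}s_{d,n}(x_p(v))$ and the sequential law $\rho_{z,p}$ split as products over the blocks $S\setminus T$ and $V\setminus S$; Fubini then identifies the outer integral with the marginal $\varphi_z^{S\setminus T}$ and the inner integral (conditioned on $y\in\Omega^{S\setminus T}$) with $\varphi_{zy}$, yielding (C3). For the Markov property I would enumerate the components $G_1,\dots,G_r$ of $G\setminus S$ with node sets $V_1,\dots,V_r$ and pick an ordering listing $S$ first, then $V_1,V_2,\dots,V_r$ block by block. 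Since no edges run between distinct $V_i$'s, every $v\in V_i$ satisfies $N_p(v)\subseteq S\cup V_i$, so both the random choice of $x_v$ and the factor $s_{d,n}(x_p(v))$ depend only on $z$ and on $x|_{V_i}$. Hence $\rho_{z,p}$ and the density factor across the components, which is the required multiplicativity.

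The main technical step is the conditional order-independence in the second paragraph: once it is secured, both (C3) and the Markov property reduce to Fubini after an opportunistic choice of ordering. Measure-theoretic care is needed only to confirm that the singular locus of $s_{d,n}$ is null for each $\varphi_z$, which follows from Proposition \ref{PROP:LSS} applied to the continuation of the partial mapping $z$.
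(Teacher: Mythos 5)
Your proof is correct and follows essentially the same route as the paper's: the conditioning axioms are read off from the sequential construction (with the order-independence of Lemma \ref{LEM:MAIN} carried over to a fixed prefix $z$), and the Markov property comes from processing the components of $G\setminus S$ in separate blocks, where both the sequential draws and the density factors $s_{d,n}(x_p(v))$ depend only on $z$ and the block itself. The paper states all of this very tersely; your elaboration (conditional order-independence of $\varphi_z$, Fubini for (C3)) just fills in details the paper leaves implicit.
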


\begin{proof}
The fact that the family $(\varphi_z)$ is a conditioning follows from the
construction of $\varphi$ as described above.

The Markov property is easy as well. Let $S\subseteq V$, and let
$G_1,\dots,G_r$ be the connected components of $G\setminus S$. Let
$z\in\Sigma_{G[S]}$. Constructing the random extension of $z$ sequentially, we
see $\varphi_z|_{G_i}$ is independent of the vectors and density function
values of the other components $G_j$.
\end{proof}

Lemmas \ref{LEM:MAIN} and \ref{LEM:MARKOV} imply Theorem
\ref{THM:ORTHO-MARKOV1}.

The following related fact was observed and used (implicitly) in \cite{LSS}.

\begin{prop}\label{PROP:MARG-SUB}
Let $S\subseteq V(G)$, let $G'=G[S]$, and let $p$ be an ordering of the nodes
of $G$ starting with $S$. Then $\rho_p^S=\rho_{G',p}$, and $\varphi_G^S$ is
absolutely continuous with respect to $\varphi_{G'}$, and vice versa.
\end{prop}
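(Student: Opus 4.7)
The plan has two parts. First I would establish $\rho_p^S=\rho_{G',p}$ by inspecting the random sequential construction step by step. Since $p$ starts with $S$, every node $u\in S$ has all its $p$-predecessors inside $S$; consequently the set $N_p(u)$ computed inside $G$ coincides with the analogous set computed inside $G'=G[S]$ (an edge $uw$ with $w\in S$ belongs to $G$ if and only if it belongs to $G'$). Thus when we run the random sequential procedure for $G$ and look only at the first $|S|$ steps, each new $x_u$ ($u\in S$) is drawn uniformly from the sphere in $N_p(u)^\perp\cap S^{d-1}$ using only already-chosen vectors indexed by $S$; this is exactly the random sequential procedure for $G'$ with ordering $p|_S$. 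Hence the two distributions coincide.

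Next I would handle the density statement by factoring $f_p^G$. Writing $x=(y,z)$ with $y=x|_S$ and $z=x|_{V\setminus S}$, observe that for $v\in S$ we have $x_p(v)=y_p(v)$, so by \eqref{EQ:F-DEF}
\[
f_p^G(y,z) \;=\; f_p^{G'}(y) \cdot h(y,z), \qquad h(y,z):=\prod_{v\in V\setminus S} s_{d,n}(x_p(v)).
\]
Disintegrating $\rho_p$ as $\rho_{G',p}(dy)$ together with the conditional random sequential extension $\rho_{p}(dz\,|\,y)$ (this disintegration is exactly what the construction produces), and integrating $f_p^G$ over the $V\setminus S$ coordinates gives, for any measurable $A\subseteq\Sigma_{G'}$,
\[
\varphi_G^S(A) \;=\; \intl_A f_p^{G'}(y)\,H(y)\,d\rho_{G',p}(y) \;=\; \intl_A H(y)\,d\varphi_{G'}(y),
\]
where $H(y):=\intl h(y,z)\,d\rho_p(z\,|\,y)$. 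This directly exhibits $\varphi_G^S \ll \varphi_{G'}$ with Radon--Nikodym derivative $H$.

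For the reverse direction $\varphi_{G'}\ll\varphi_G^S$ it suffices to show $H(y)>0$ for $\varphi_{G'}$-almost every $y$. Given such a $y$ in general position, Proposition \ref{PROP:LSS} applied to the extension step guarantees that the random sequential continuation lies in $\Sigma_G$ almost surely, so every factor $s_{d,n}(x_p(v))=\text{const}/\Det(x_p(v))$ is strictly positive with $\rho_p(\cdot\,|\,y)$-probability one; hence $h(y,z)>0$ almost surely and $H(y)>0$. The only point requiring care is that the conditional distribution $\rho_p(\cdot\,|\,y)$ is itself well-defined and concentrated on valid extensions, which is immediate because $G'$ being an induced subgraph of the $d$-sparse graph $G$ is again $d$-sparse, and so is $G$, guaranteeing that the sequential construction never gets stuck. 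I expect this last verification, rather than any serious analytic obstacle, to be the most delicate bookkeeping in the write-up.
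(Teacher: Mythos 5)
Your proof is correct, but for the second assertion you take a more explicit route than the paper. The paper argues in two lines: by construction $\varphi_{G'}$ and $\rho_{G',p}$ are mutually absolutely continuous (both densities $f$ are a.e.\ positive), and so are $\varphi_G$ and $\rho_p$; since mutual absolute continuity passes to marginals, $\varphi_G^S$ is mutually absolutely continuous with $\rho_p^S=\rho_{G',p}$, hence with $\varphi_{G'}$. You instead factor $f_p^G=f_{p}^{G'}\cdot h$, disintegrate $\rho_p$ over its $S$-marginal (which the sequential construction provides for free, and which is exactly the kernel $\rho_{z,p}$ of \eqref{EQ:PHI-DEFP}), and exhibit the explicit Radon--Nikodym derivative $H(y)=\int h(y,z)\,d\rho_p(z\,|\,y)$, which is nothing but the total mass $\varphi_y(\Sigma_{G[V\setminus S]})$ of the conditional measure; you then prove $H>0$ a.e.\ to get the reverse direction. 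Your version buys more (an explicit derivative, which ties in nicely with the Markovian conditioning of Lemma \ref{LEM:MARKOV}), at the cost of some disintegration bookkeeping that the paper's marginal argument avoids. Two small points to tighten: the fact you need is not Proposition \ref{PROP:LSS} (an existence statement) but the theorem of \cite{LSS} that the random sequential ortho-homomorphism is in general position almost surely; and rather than asserting the conditional a.s.\ general position for \emph{every} general-position $y$, it is cleaner (and sufficient) to deduce it for $\rho_{G',p}$-almost every $y$ from the unconditional a.s.\ statement via the disintegration you already set up. Neither point is a genuine gap.
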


\begin{proof}
The first assertion is obvious from the sequential construction of $\rho_p$. By
construction, $\varphi_{G'}$ and $\rho_{G',p}$ are mutually absolutely
continuous, and so are $\varphi_G$ and $\rho_p$. The second assertion implies
that their marginals $\rho_p^S=\rho_{G',p}$ and $\varphi_G^S$ are mutually
absolutely continuous, and hence so are $\varphi_G^S$ and $\varphi_{G'}$.
\end{proof}

\subsection{Explicitly computable examples}

\begin{example}[Trees]\label{EXA:TREE}
A simple example is a tree $F$. Let $p$ be a search order from a root $u$. Then
$d_p(v)=1$ for every $v\not= u$, and $\Det(x_p(u))=1$ for every $u$. Hence
$f_p(x)\equiv 1$, $\rho_p$ is the same distribution for every search order, and
$\varphi=\rho_p$. Thus the measure $\varphi(F,d)$ is a well defined probability
distribution, and $t(G,d)=1$. The more general example of bipartite graphs will
be discussed in Section \ref{SEC:BIPARTITE}.
\end{example}

\begin{example}[Triangles]\label{EXA:TRIANGLE}
Let $d=3$ and $G=K_3$, with the nodes labeled $1,2,3$. Then $(x_1,x_2)$ is
uniformly distributed on orthogonal pairs. It follows that $\Det(x_1,x_2)=1$,
and so all the Det's in the denominator of \eqref{EQ:TGD-DEF} are $1$. Hence
\begin{equation}\label{EQ:K3D}
t(K_3,d) = \frac{A_{d-1}A_{d-3}}{A_{d-2}^2} = \frac{(\pi/2)^{(-1)^d}\big((d-3)!!\big)^2}{(d-2)!!(d-4)!!}.
\end{equation}
In particular,  $t(K_3,3) = 2/\pi$ and $t(K_3,4)=\pi/4$. Other cycles will be
discussed in Sections \ref{SEC:BIPARTITE} and \ref{SSEC:CYCLE-SPEC}.
\end{example}

\begin{example}[Rigid Circuit Graphs]\label{EXA:RIGID}
We can get rid of the integration for all {\it rigid circuit graphs}, which
contain no induced cycles other than triangles. A well-known characterization
of these graphs is that their nodes can be ordered so that the neighbors of any
node $v$ preceding it spans a complete subgraph. Using this ordering $p$ to
compute $t(G,d)$ (where $d$ is large enough so that $G$ is $d$-sparse), we see
that the vectors in every $x(N_p(v))$ are mutually orthogonal, and so the Det's
in the denominator are $1$. Hence we get
\begin{equation}\label{EQ:RIGID-T}
t(G,d)=\frac{A_{d-1}^{|E|-|V|}}{A_{d-2}^{|E|}}\prod_{v\in V}  A_{d-d_p(v)-1}.
\end{equation}
In particular, we get a formula for complete graphs:
\begin{equation}\label{EQ:TGD-KR}
t(K_r,d)=\frac{A_{d-1}^{r(r-3)/2}}{A_{d-2}^{r(r-1)/2}} \prod_{i=1}^{r}A_{d-i}.
\end{equation}

We could use \eqref{EQ:SK-DEF} to express \eqref{EQ:RIGID-T} as $a\pi^b$, where
$a$ is rational and $b$ is an integer. Let $q$ denote the number of odd
``backward'' degrees $d_p(v)$. Then straightforward computation gives that
\[
b=\begin{cases}
   \displaystyle \frac{|E|-q}{2}, & \text{if $d$ is even}, \\[6pt]
   \displaystyle \frac{q-|E|}{2}, & \text{if $d$ is odd}.
  \end{cases}
\]
Surprisingly, this exponent does not depend on $d$ except for its sign. The
combinatorial significance of the rational coefficient $a$ would be interesting
to determine.
\end{example}

\begin{example}[Complete bipartite graphs]\label{EXA:C4}
Let $G=K_{a,b}$, where $a+b\le d$. Then, using \eqref{EQ:TGD-DEF} and Lemma
\ref{LEM:XEXPECT},
\begin{align}\label{EQ:BICOMPLETE}
t(G,d)&=\frac{A_{d-1}^{ab-b}A_{d-a-1}^b}{A_{d-2}^{ab}}
\intl_{(S^{d-1})^a}\frac{1}{\Det(x)^b}\,d\pi^a(x)\nonumber\\
&= \frac{A_{d-1}^{ab-b-a}A_{d-b-1}^aA_{d-a-1}^b A_{d-1}\cdots A_{d-a}}
{A_{d-2}^{ab} A_{d-b-1}\cdots A_{d-b-a}}.
\end{align}
This implies that $t(G,d)$ can again be expressed as a rational multiple of an
integer power of $\pi$, where the exponent of $\pi$ depends on the parity of
$d$ only.
\end{example}

\subsection{Bipartite graphs}\label{SEC:BIPARTITE}

Let $G$ be a bipartite graph with bipartition $V=U\cup W$. The construction of
the measure $\varphi$ can be carried out by ordering the nodes starting with
$U$, to get the reference ordering $p$ of $V$. If $x$ is a random point from
$\rho_p$, then $x_u$ ($u\in U$) are independent random vectors in $S^d$, and
$f_p$ depends only on these variables $x_u$. Furthermore,
$s_{d,n}(x_p(u))=A_{d-1}$ for $u\in U$. Hence (canceling $A_{d-1}^{|U|}$)
\[
f_p(x) =  \frac{A_{d-1}^{|E|-|W|}}{A_{d-2}^{|E|}}
\prod_{v\in W} \frac{A_{d-d(v)-1}}{\Det(x(N(v)))},
\]
and
\begin{align}\label{EQ:BIPART}
t(G,d)&=\intl_{(S^{d-1})^V} f_p\,d\rho_p = \intl_{(S^{d-1})^U} \prod_{v\in W} s_{d,d_p(v)}(x)\,d\pi^U\nonumber\\
&= \frac{A_{d-1}^{|E|-|W|}}{A_{d-2}^{|E|}}
\intl_{(S^{d-1})^U} \prod_{v\in W}\frac{A_{d-d(v)-1}}{\Det(x(N(v)))}\,d\pi^U
\end{align}
We'll see examples where this number is finite and also where this number is
infinite.

\begin{remark}\label{REM:HYPERPLANES}
An ortho-homomorphism of a bipartite graph has the following geometric
interpretation. Consider the points of $U$ as vectors in $\R^d$ (as before),
but the points of $W$ as normal vectors of hyperplanes. Orthogonality
translates to incidence. For example, a representation of $C_6$ in $\R^3$ is a
simplicial cone, with three rays (the vectors in their direction having unit
length), and the normals of the three faces (again, of unit length).
\end{remark}

\begin{remark}\label{REM:SIDOR}
We'll see (Corollary \ref{appthmcor}) that Sidorenko's conjecture would imply
the inequality
\begin{equation}\label{EQ:SIDOR}
t(G,d)\ge 1
\end{equation}
for every $d$-sparse bipartite graph $G$. It would be interesting to prove this
inequality at least in this special case.
\end{remark}

As a simple but important special class of bipartite graphs, the subdivision
$G=H'$ of a simple graph $H$ by one node on each edge is a bipartite graph.
Then
\begin{equation}\label{EQ:LINE0}
t(H',d)=\frac{A_{d-1}^{|E|} A_{d-3}^{|E|}}{A_{d-2}^{2|E|}}\intl_{(S^{d-1})^U}
\prod_{ij\in E}\frac{1}{\Det(x_i,x_j)}\,d\pi^V.
\end{equation}

We can use this special case to justify considering $t(G,d)$ as the density of
$G$ in the orthogonality graph. The function
\begin{equation}\label{EQ:2-STEP}
s_{d,2}(x,y) = \frac{A_{d-1} A_{d-3}}{A_{d-2}^2} \frac1{\Det(x,y)}
\end{equation}
defines a graphon $(S^{d-1},\pi,s_{d,2})$. Let $n=|V(H)|$, $m=|E(H)|$, and
assume that all degrees of $H$ are bounded by $d-1$. Then the
ortho-homomorphism number can be expressed as follows.
\begin{equation}\label{EQ:THD}
t(H',d) =  \intl_{(S^{d-1})^V}\prod_{ij\in E(H)} s_{d,2}(x(N(v)))\,d\pi^V(x) =t(H,s_{d,2}).
\end{equation}
Since $t(H',W)=t(H,W\circ W)$ for any graphon $W$, this justifies to consider
$t(G,d)$ as the homomorphism density of $G$ in the orthogonality graph (with
the edge measure scaled to a probability measure).

\begin{example}\label{EXA:C6}
Let $d=3$ and $G=C_6=K_3'$. Then the conditions above are satisfied, and
\eqref{EQ:BIPART} gives that
\[
t(C_6,3) = \frac{8}{\pi^3} \intl_{(S^2)^3}\frac1{\Det(x_1,x_2)\Det(x_2,x_3)\Det(x_3,x_1))}\,d\pi^3(x).
\]
Let $\measuredangle(x_1,x_3)=\alpha$, $\measuredangle(x_2,x_3)=\beta$, and
$\measuredangle(x_1,x_2)=\gamma$. Let $\theta$ denote the (unsigned) angle
between the planes $\lin(x_1,x_3)$ and $\lin(x_2,x_3)$. By the spherical cosine
theorem, $\gamma=\gamma(\alpha,\beta,\theta)$ is given by
\begin{equation}\label{EQ:COSINE}
\cos\gamma = \cos\alpha\cos\beta - \sin\alpha\sin\beta\cos\theta.
\end{equation}
Fixing $x_3$, it is easy to see that the angles $\alpha$, $\beta$ and $\theta$
are independent random variables with values in $[0,\pi]$; their density
functions are $\frac12\sin\alpha$, $\frac12\sin\beta$ and $1/\pi$,
respectively. Hence
\begin{align*}
t(C_6,3) = \frac{8}{\pi^3} \intl_{[0,\pi]^3} \frac{\frac12\sin\alpha\,\frac12 \sin\beta\,\frac1{\pi}}
{\sin\alpha\,\sin\beta\,\sin\gamma}\,d\alpha\,d\beta\,d\theta
= \frac{2}{\pi^4} \intl_{[0,\pi]^3} \frac{1}{\sin\gamma}\,d\alpha\,d\beta\,d\theta.
\end{align*}
Substituting from \eqref{EQ:COSINE},
\begin{equation}\label{EQ:TC6}
t(C_6,3) = \frac{2}{\pi^4} \intl_{[0,\pi]^3} \frac{d\alpha\,d\beta\,d\theta}{\sqrt{1-\big(\cos\alpha\cos\beta
- \sin\alpha\sin\beta\cos\theta\big)^2}}.
\end{equation}
\end{example}

\section{Finiteness}

The value $t(G,d)$, as defined by the integral in \eqref{EQ:TGD-DEF}, may be
finite or infinite even for $d$-sparse graphs, as we will show below. In this
section, we study the issue of finiteness. We only address this issue for
bipartite graphs. Further exact formulas, based on spectral methods, will be
given in the next section.

\subsection{A general bound}

A general upper bound on $t(G,d)$ can be obtained by applying the following
generalized H\"older inequality \cite{Fin}.

\begin{lemma}\label{LEM:GHOLDER}
Let $f_1,\dots,f_m:~\Omega^n\to\R$ be measurable $n$-variable functions on some
probability space $(\Omega,\AA,\pi)$, such that $f_i$ depends only on a subset
$B_i\subseteq[n]$ of the variables. Let $p_1,\dots,p_m\ge1$ such that
\[
\sum_{i:\,B_i\ni j}\frac1{p_i} \le 1\qquad(j=1,\dots, n).
\]
Then
\[
\intl_{\Omega^n} f_1\dots f_m\,d\pi^n \le \|f_1\|_{p_1}\dots\|f_m\|_{p_m}.
\]
\end{lemma}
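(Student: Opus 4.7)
The plan is to proceed by induction on $n$, the number of variables, using a probability-space strengthening of the classical H\"older inequality as the base case. I would first reduce to the case $f_i\ge 0$ by replacing each $f_i$ with $|f_i|$, which only enlarges the left-hand side while preserving the right.

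For the base case $n=1$, I would establish the following probability-space form of H\"older: if $(\Omega,\pi)$ is a probability space, $g_1,\dots,g_m\ge 0$ and $r:=\sum_i 1/p_i\le 1$, then
\[
\intl_\Omega g_1\cdots g_m\,d\pi \;\le\; \|g_1\|_{p_1}\cdots\|g_m\|_{p_m}.
\]
This follows from two applications of ordinary H\"older. Since $\sum_i r/p_i=1$, applying H\"older to $g_i^r$ with exponents $p_i/r$ yields $\|g_1\cdots g_m\|_r\le\prod_i\|g_i\|_{p_i}$; then on a probability space $\|h\|_1\le\|h\|_r$ whenever $r\ge 1$. Functions with $B_i=\emptyset$ are constants and factor cleanly.

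For the inductive step, assume the statement for $n-1$ variables, and let $S:=\{i:n\in B_i\}$. For each fixed $x'\in\Omega^{n-1}$, the base-case inequality applied in the variable $x_n$ to the functions $\{f_i(x',\cdot)\}_{i\in S}$ gives
\[
\intl_\Omega\prod_{i\in S} f_i(x',x_n)\,d\pi(x_n)\;\le\;\prod_{i\in S}\|f_i(x',\cdot)\|_{p_i},
\]
using that $\sum_{i\in S}1/p_i\le 1$ (the $j=n$ instance of the hypothesis). Set $g_i(x'):=\|f_i(x',\cdot)\|_{p_i}$ for $i\in S$ and $g_i:=f_i$ otherwise. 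Each $g_i$ depends only on $B_i':=B_i\setminus\{n\}\subseteq[n-1]$, and the condition $\sum_{i:B_i'\ni j}1/p_i\le 1$ holds trivially for $j\in[n-1]$. The inductive hypothesis applied to the $g_i$'s on $\Omega^{n-1}$ then closes the argument, once we observe via Fubini that $\|g_i\|_{p_i}=\|f_i\|_{p_i}$ (both equal $(\intl|f_i|^{p_i}\,d\pi^{B_i})^{1/p_i}$).

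There is no real obstacle; the argument is essentially bookkeeping. The one subtle point worth emphasizing is that the hypothesis permits $\sum 1/p_i\le 1$ rather than requiring equality, and this relaxation is exactly what is needed: it is what allows the passage from $\|\cdot\|_r$ to $\|\cdot\|_1$ in the base case via the probability-space monotonicity of norms, and it ensures that the relevant sums remain $\le 1$ when restricted to subsets of indices as the induction proceeds.
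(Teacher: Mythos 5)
Your argument is correct in substance, but note that the paper does not prove this lemma at all: it is quoted from Finner's paper \cite{Fin} and used as a black box, so there is no internal proof to compare against. Your induction on the number of coordinates, peeling off the last variable via Fubini--Tonelli and applying a probability-space H\"older inequality in that variable (using the $j=n$ instance of the hypothesis $\sum_{i:B_i\ni n}1/p_i\le 1$), is the standard self-contained route to Finner's inequality in the product setting, and all the bookkeeping you describe (non-negativity reduction, $B_i':=B_i\setminus\{n\}$, $\|g_i\|_{p_i}=\|f_i\|_{p_i}$ by Fubini) goes through. One slip to fix in the base case: you define $r:=\sum_i 1/p_i\le 1$, but then assert $\sum_i r/p_i=1$ and invoke $\|h\|_1\le\|h\|_r$ for $r\ge 1$, which is inconsistent with that definition. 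You want $r$ defined by $1/r=\sum_i 1/p_i$, so that $r\ge 1$, the exponents $p_i/r$ are admissible for ordinary H\"older (since $p_i\ge r$, as $1/r\ge 1/p_i$), $\sum_i \frac{r}{p_i}=1$, and the monotonicity of norms on a probability space gives the passage from $\|\cdot\|_r$ to $\|\cdot\|_1$; the degenerate case $\sum_i 1/p_i=0$ (all $p_i=\infty$) is trivially fine. With that correction the proof is complete.
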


Of course, the lemma is only interesting if the right hand side is finite,
i.e., if $f_i\in L_{p_i}(\Omega^m)$ for every $i$. Applying it to the
expression \eqref{EQ:BIPART}, we get
\begin{equation}\label{EQ:TGDS}
t(G,d)\le \prod_{v\in W} \|s_{d,\deg(v)}\|_{p_v},
\end{equation}
where the numbers $p_v$ must satisfy
\begin{equation}\label{EQ:VNUP}
\sum_{v\in N(u)}\frac1{p_v}\le 1
\end{equation}
for all $(u\in U)$. The bound is finite when $s_{d,\deg(v)}\in L_{p_v}$ for all
$v\in W$; as noted in Section \ref{SEC:DETS}, this happens if and only if
\begin{equation}\label{EQ:PVDD}
p_v\le d-\deg(v)\qquad(v\in W).
\end{equation}
The upper bound in \eqref{EQ:TGDS} can be expressed explicitly using Lemma
\ref{LEM:XEXPECT}, but it is not really appealing. However, the finiteness
result is worth stating:

\begin{theorem}\label{THM:BIP-FINITE1}
Let $G$ be a bipartite graph with bipartition $(U,W)$, and suppose that
\begin{equation}\label{EQ:FIN-BOUND}
\sum_{v\in N(u)} \frac1{d-\deg(v)} \le 1
\end{equation}
for all $u\in U$. Then $t(G,d)$ is finite.
\end{theorem}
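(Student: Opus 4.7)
The plan is to apply the generalized H\"older inequality (Lemma \ref{LEM:GHOLDER}) directly to the representation
\[
t(G,d) = \intl_{(S^{d-1})^U} \prod_{v\in W} s_{d,\deg(v)}(x(N(v)))\,d\pi^U(x)
\]
obtained in \eqref{EQ:BIPART}. Each factor $f_v(x) := s_{d,\deg(v)}(x(N(v)))$ depends only on the coordinates indexed by $N(v)\subseteq U$, which is precisely the setup of that lemma. It therefore suffices to exhibit exponents $p_v\ge 1$ (one per $v\in W$) simultaneously satisfying (i) the duality constraint $\sum_{v\in N(u)} 1/p_v \le 1$ for every $u\in U$, and (ii) the integrability condition $s_{d,\deg(v)} \in L^{p_v}$ for every $v\in W$.

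The natural extremal choice is $p_v := d-\deg(v)$. Constraint (i) is then exactly the hypothesis \eqref{EQ:FIN-BOUND}, which incidentally also forces $d-\deg(v)\ge 1$, so that $p_v\ge 1$ and H\"older is applicable. Constraint (ii) holds because, as noted following Lemma \ref{LEM:N-LOOSE}, $s_{d,n}\in L^p$ if and only if $p<d-n+1$; with our choice the inequality $d-\deg(v)<d-\deg(v)+1$ is automatic. Therefore \eqref{EQ:TGDS} gives
\[
t(G,d) \le \prod_{v\in W} \|s_{d,\deg(v)}\|_{p_v} < \infty,
\]
each norm being finite (and, if desired, explicitly computable via Lemma \ref{LEM:XEXPECT} applied with exponent $-p_v$).

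There is essentially no substantial obstacle: the theorem is a direct packaging of the H\"older argument sketched between \eqref{EQ:TGDS} and \eqref{EQ:PVDD} with the extremal choice of exponents. The only subtle point worth flagging is that $p_v=d-\deg(v)$ sits at the edge of the integrability range of $s_{d,\deg(v)}$; fortunately the criterion $p<d-\deg(v)+1$ is strict, so the choice lies safely inside and no $\varepsilon$-perturbation of the exponents is needed.
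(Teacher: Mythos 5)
Your proposal is correct and is essentially the paper's own proof: the authors likewise apply the generalized H\"older inequality to \eqref{EQ:BIPART} with the extremal exponents $p_v=d-\deg(v)$, checking \eqref{EQ:VNUP} via the hypothesis \eqref{EQ:FIN-BOUND} and integrability via the strict criterion $p<d-\deg(v)+1$. The only point the paper adds explicitly (and you leave implicit) is that \eqref{EQ:FIN-BOUND} implies $G$ is $d$-sparse, so that $t(G,d)$ is well defined in the first place.
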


\begin{proof}
The condition implies that $G$ is $d$-sparse, and so $t(G,d)$ is well defined.
Choosing $p_v=d-\deg(v)$, \eqref{EQ:VNUP} and \eqref{EQ:PVDD} are satisfied.
\end{proof}

A special case when this condition is satisfied and that is easier to handle is
the following.

\begin{corollary}\label{COR:BIP-FINITE2}
Let $G$ be a bipartite graph with bipartition $(U,W)$, and suppose that all
degrees in $U$ are bounded by $a$ $(1\le a\le d-2)$, and all degrees is $W$ are
bounded by $d-a$. Then $t(G,d)$ is finite.
\end{corollary}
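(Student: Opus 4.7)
The plan is simply to verify that the hypothesis of Theorem \ref{THM:BIP-FINITE1} is satisfied, since this is the strongest finiteness criterion already established in this section. All the analytic work — the generalized Hölder inequality, the integrability thresholds for $s_{d,\deg(v)}$ coming from Lemma \ref{LEM:XEXPECT}, and the compatibility conditions \eqref{EQ:VNUP}, \eqref{EQ:PVDD} — has been carried out in the proof of Theorem \ref{THM:BIP-FINITE1}, so the corollary should follow by a one-line degree count.

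Concretely, fix an arbitrary $u\in U$. The degree bound in $U$ gives $|N(u)|\le a$, and the degree bound in $W$ gives $\deg(v)\le d-a$ for every $v\in N(u)$, hence $d-\deg(v)\ge a>0$ (this is where $a\le d-2$, combined with $a\ge 1$, is used to ensure that the reciprocals are well-defined and that $G$ is $d$-sparse in the first place). Therefore
\[
\sum_{v\in N(u)}\frac{1}{d-\deg(v)} \;\le\; \sum_{v\in N(u)}\frac{1}{a} \;=\; \frac{|N(u)|}{a}\;\le\;\frac{a}{a}\;=\;1,
\]
which is precisely condition \eqref{EQ:FIN-BOUND}. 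Applying Theorem \ref{THM:BIP-FINITE1} yields $t(G,d)<\infty$.

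There is essentially no obstacle: the corollary is a clean specialization where the two degree bounds $\deg_U\le a$ and $\deg_W\le d-a$ are exactly calibrated so that the harmonic sum in \eqref{EQ:FIN-BOUND} collapses to $|N(u)|/a\le 1$. The only small point worth flagging in the write-up is to note explicitly why one may invoke Theorem \ref{THM:BIP-FINITE1}, namely that the hypothesis $a\le d-2$ guarantees $d-\deg(v)\ge 2$ so that the generalized Hölder exponents $p_v=d-\deg(v)$ used in that proof are indeed $\ge 1$, and $G$ is $d$-sparse as required for $t(G,d)$ to be defined.
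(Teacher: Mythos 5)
Your proof is correct and is exactly the intended derivation: the paper states the corollary as a direct specialization of Theorem \ref{THM:BIP-FINITE1}, verified by the same one-line count $\sum_{v\in N(u)}1/(d-\deg(v))\le |N(u)|/a\le 1$. One tiny inaccuracy in your closing remark: the hypotheses only give $d-\deg(v)\ge a\ge 1$ (not necessarily $\ge 2$ when $a=1$), but that is all that is needed, since the Hölder exponents $p_v=d-\deg(v)$ only need to be $\ge 1$ and the sparsity is already supplied by Theorem \ref{THM:BIP-FINITE1} itself.
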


\subsection{Subdivisions}

In this section we show:

\begin{theorem}\label{THM:SUBDIV-FIN}
If $G$ is the subdivision (with one node on each edge) of a simple graph $H$
with maximum degree $d-1$, then $t(G,d)$ is finite.
\end{theorem}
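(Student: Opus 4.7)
The plan is to use formula \eqref{EQ:LINE0} and show that
\[
I(H) := \intl_{(S^{d-1})^{V(H)}} \prod_{ij\in E(H)} \frac{1}{\sin\measuredangle(x_i,x_j)}\,d\pi^{V(H)}(x)
\]
is finite whenever $\Delta(H)\le d-1$, since $t(H',d)$ is a positive constant multiple of $I(H)$. I will prove this by strong induction on $|V(H)|$, removing one vertex of $H$ at a time and integrating out its image.

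For the easy step of the induction, suppose $H$ contains a vertex $u$ with $\deg_H(u)=k\le d-2$. Applying the generalized H\"older inequality of Lemma \ref{LEM:GHOLDER} to the inner $x_u$-integral with equal exponents $p_j=k$, and using that $k<d-1$ together with Lemma \ref{LEM:XEXPECT} to bound each $\intl_{S^{d-1}} (\sin\measuredangle(x_u,x_{v_j}))^{-k}\,d\pi(x_u)$ by a constant depending only on $d$ and $k$, yields a uniform pointwise bound $F_u(x_{v_1},\dots,x_{v_k})\le C_{d,k}$ on the star integral. Consequently $I(H)\le C_{d,k}\,I(H\setminus u)$, and the induction closes because $\Delta(H\setminus u)\le\Delta(H)\le d-1$.

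The real obstacle is the remaining case, in which every vertex of $H$ has degree exactly $d-1$, i.e.\ $H$ is $(d-1)$-regular. Integrating out $x_u$ for any vertex $u$ now gives the star integral
\[
F_u(x_{v_1},\dots,x_{v_{d-1}})=\intl_{S^{d-1}}\prod_{j=1}^{d-1}\frac{1}{\sin\measuredangle(x_u,x_{v_j})}\,d\pi(x_u),
\]
which is pointwise finite but no longer uniformly bounded in the $x_{v_j}$, because $\intl_{S^{d-1}} \sin^{-(d-1)}\theta\,d\pi$ is exactly logarithmically divergent, so H\"older sits precisely on the boundary of integrability. My plan is to bound $F_u$ via the decomposition $x_u=\sin\phi\,n+\cos\phi\,a$, where $n$ is a unit normal to $L=\lin(x_{v_1},\dots,x_{v_{d-1}})$ and $a\in L\cap S^{d-1}$; a direct computation gives the key identity
\[
\sin^2\measuredangle(x_u,x_{v_j}) = \sin^2\phi + \cos^2\phi\,\sin^2\measuredangle_L(a,x_{v_j}),
\]
which separates the singular $\phi$-dependence from the in-hyperplane geometry of $(a,x_{v_j})$. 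I expect this analysis to show that the only blow-up of $F_u$ occurs when the $x_{v_j}$'s become nearly linearly dependent, and then only at a rate logarithmic in the distance to the singular locus $\{\Det(x_{v_1},\dots,x_{v_{d-1}})=0\}$.

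The hard part will be closing the induction in this regular case: one must absorb the resulting logarithmic singularity of $F_u$ into the remaining polynomial singularities $\prod_{ij\in E(H\setminus u)}1/\sin\measuredangle(x_i,x_j)$ before applying the induction hypothesis to $H\setminus u$. For this I expect to invoke Lemma \ref{LEM:DETAVERAGE}, which provides precisely the averaged estimate $D_r(x_1,\dots,x_n)\Det(x_1,\dots,x_n)\le C_{n,d}$ needed to trade the logarithmic blow-up of $F_u$ against the $1/\sin$ factors already present at the neighbors $v_j$ of $u$ in $H\setminus u$; the induction hypothesis then finishes the proof.
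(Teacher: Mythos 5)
Your reduction to the finiteness of $I(H)=\intl_{(S^{d-1})^{V(H)}}\prod_{ij\in E(H)}(\sin\measuredangle(x_i,x_j))^{-1}\,d\pi$ and your treatment of the case where $H$ has a vertex of degree $k\le d-2$ are fine: H\"older with exponents $p_j=k$ and the integrability of $\sin^{-k}$ against the density $(\sin\theta)^{d-2}$ give a uniform bound on the star integral, so $I(H)\le C\,I(H\setminus u)$ and the induction closes there. But the $(d-1)$-regular case is not a residual technicality — it is the whole content of the theorem (the paper points out that Corollary \ref{COR:BIP-FINITE2} already covers degrees $\le d-2$, and the motivating example $C_{2k}=C_k'$ in $d=3$ has $H$ exactly $2$-regular) — and for that case you only have a plan, not a proof. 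Two concrete problems: (i) even granting the (plausible, one-sided) bound $F_u\lesssim 1+\log\bigl(1/\Det(x_{N(u)})\bigr)$ — note the blow-up is really governed by clustering of the $x_{v_j}$ up to sign, not by distance to the full locus $\{\Det=0\}$ — you are left with $I(H)\lesssim \intl \bigl(1+\log(1/\Det(x_{N(u)}))\bigr)\prod_{ij\in E(H\setminus u)}\sin^{-1}\measuredangle(x_i,x_j)\,d\pi$, and absorbing an unbounded logarithmic factor into an integrand that the induction hypothesis only controls in $L^1$ requires genuine extra slack (an $L^{1+\delta}$ or $L\log L$ bound), which is false here: already for a path in $d=3$ the product $\prod\sin^{-(1+\delta)}$ is non-integrable, so a naive H\"older/Young absorption cannot work. (ii) Lemma \ref{LEM:DETAVERAGE} does not supply this slack: it bounds local averages of $1/\Det$ over products of $r$-balls by $1/\Det$ at the centers (and is used in the paper for the graphon approximation, Theorem \ref{appthm}), whereas what you need is an integrability gain letting $\log(1/\Det(x_{N(u)}))$ be traded against the factors $\sin^{-1}\measuredangle(x_{v_j},x_w)$, $w\ne u$ — factors which do not even involve pairs of neighbors of $u$. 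No mechanism for this trade is given, so the induction does not close in the regular case.

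For comparison, the paper avoids the vertex-removal induction entirely and handles the borderline exponents globally: applying Lemma \ref{LEM:TREE} to the edge weights $\log W^e(x)$ (with $W=s_{d,2}$), it bounds $W^H(x)$ pointwise by $W^{e_1}(x)\,W^{F\setminus e_1}(x)^{d-1}$ for a maximum-weight spanning tree $F$, where the most singular edge $e_1$ receives exponent $1$ and every other tree edge exponent at most $d-1$. Along a tree the angles $\vartheta_i$ are independent with density $\propto(\sin\vartheta_i)^{d-2}$, so exponent $d-1$ leaves a single $1/\sin\vartheta_i$ per edge, and exponent $1$ on the worst edge leaves $(\sin\vartheta_1)^{d-3}\le 1$; the ordering constraint $\sin\vartheta_1\le\dots\le\sin\vartheta_{n-1}$ then makes the iterated integral telescope to a finite value. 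If you want to salvage your approach, you would need a quantitative substitute for this ordering trick — e.g.\ a proof that the log factor can be paid for by the degree drop of the neighbors of $u$ (they have degree $d-2$ in $H\setminus u$) — and that is precisely the step currently missing.
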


A notable special case for $d=3$ is the cycle $C_{2k}$, as the subdivision of
$C_k$ $(k\ge 3)$. Note that this Proposition does not follow from Corollary
\ref{COR:BIP-FINITE2} (only if the degrees are strictly smaller than $d-1$).

We need a simple lemma in elementary graph theory.

\begin{lemma}\label{LEM:TREE}
Let $G=(V,E)$ be a simple connected graph on $n\ge 2$ nodes, with all degrees
at most $D$, let $e_1\in E$ and $w:~E\to\R_+$. Then there is a spanning tree
$F$ of $G$, and integers $(k_e:~e\in E(F)$ such that $k_{e_1}=1$, $k_e\le D$
for all $e\in E(F)$, $\sum_e k_e = |E(G)|$, and
\[
\sum_{e\in E} w(e) \le \sum_{e\in E(F)} k_e w(e).
\]
\end{lemma}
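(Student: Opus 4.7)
The plan is to take $F$ to be a maximum spanning tree of $G$ (with respect to $w$) subject to the constraint $e_1\in E(F)$; such an $F$ exists, e.g.\ by running Kruskal's algorithm with $e_1$ inserted first. Root $F$ at the endpoint $v_0$ of $e_1$, so that the other endpoint $w$ of $e_1$ becomes a child of $v_0$ with parent edge $p(w)=e_1$. The integers $k_e$ will arise from a charging scheme: each non-tree edge $e'\in E\setminus E(F)$ will be assigned to exactly one tree edge, and I will set $k_e$ equal to $1$ plus the number of non-tree edges assigned to $e$. This forces $\sum_e k_e=(n-1)+(|E|-(n-1))=|E|$ automatically.

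The key step is the selection rule. For each non-tree edge $e'=\{a,b\}$, I will pick an endpoint $s(e')\in\{a,b\}$ subject to (i) $s(e')\neq v_0$, (ii) $s(e')\neq w$, and (iii) the parent edge $p(s(e'))$ lies on the tree path between $a$ and $b$ in $F$; equivalently, $s(e')$ is not the $F$-lowest common ancestor of $\{a,b\}$. A short case analysis (splitting on whether $v_0$ or $w$ belongs to $\{a,b\}$, and using that $G$ is simple so that $\{a,b\}\neq\{v_0,w\}$) shows that such an $s(e')$ always exists. The edge $e'$ is then charged to the tree edge $p(s(e'))$. By (ii), no edge is ever charged to $e_1=p(w)$, so $k_{e_1}=1$. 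Moreover, each non-root vertex $v$ has tree degree at least $1$ in $F$, hence at most $\deg_G(v)-1\le D-1$ non-tree edges incident to it, so at most $D-1$ charges land on $p(v)$, giving $k_e\le D$ throughout.

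For the weight inequality I will invoke the cycle property: because $F$ is a maximum-weight spanning tree among those containing $e_1$, for any $e'\notin E(F)$ the unique cycle $C$ in $F+e'$ has the property that every tree edge $f\in C$ with $f\neq e_1$ satisfies $w(f)\ge w(e')$ (otherwise $F-f+e'$ would be a spanning tree containing $e_1$ of larger weight). Since $s(e')\neq w$, the edge $p(s(e'))$ lies on $C$ and is distinct from $e_1$, so $w(p(s(e')))\ge w(e')$. Summing over $e'\notin E(F)$ and adding $\sum_{e\in E(F)}w(e)$ to both sides yields
\[
\sum_{e\in E} w(e)\le \sum_{e\in E(F)} w(e)+\sum_{e'\notin E(F)} w(p(s(e')))=\sum_{e\in E(F)} k_e w(e),
\]
as required. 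The main (mild) obstacle is the case analysis ensuring a valid $s(e')$ that simultaneously avoids both the root $v_0$ and the special child $w$; once the maximum spanning tree and the charging rule are in place, the degree count and the cycle property deliver the conclusion routinely.
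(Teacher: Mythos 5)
Your proof is correct and follows essentially the same route as the paper's: a maximum-weight spanning tree, each non-tree edge charged to a ``parent'' tree edge at one of its endpoints, the exchange (cycle) property giving $w(p(s(e')))\ge w(e')$, and a per-vertex degree count giving $k_e\le D$. The only differences are cosmetic: you make explicit the constraint that $F$ contain $e_1$ (which the paper leaves implicit) and choose the charged endpoint by a rooting/LCA rule with a short case analysis, where the paper instead uses a search order starting at $e_1$ and always charges to the parent edge of the later endpoint, which makes the conditions $k_{e_1}=1$ and the exchange step automatic.
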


\begin{proof}
Let $F$ be a maximum weight spanning tree. It suffices to define a map
$\phi:~E(G)\to E(F)$ such $w(\phi(e))\ge w(e)$, $k_e=|\phi^{-1}(e)|\le D$, and
$|\phi^{-1}(e_1)|=1$. For any search order $(v_1,\dots,v_n)$ of $F$ starting
with $e_1=v_1v_2$, we map each edge $v_iv_j$ ($i<j$) to the (unique) edge
$v_jv_{j'}$ of $F$ with $j'<j$. It is easy to see that this map satisfies our
requirements: at most $D$ edges are mapped onto any edge of $F$, no edge other
than itself is mapped onto $e_1$, and if $v_iv_j$ is mapped onto $v_{j'}v_j$,
then $w(v_iv_j) \le w(v_{j'}v_j)$, because otherwise replacing the edge
$v_{j'}v_j$ by $v_iv_j$, we would get a tree with larger weight than $F$.
\end{proof}

\begin{proof*}{Theorem \ref{THM:SUBDIV-FIN}}
By identity \eqref{EQ:THD}, we have
\[
t(G,d) = t(H,W),
\]
where $W=s_{d,2}$ defines a graphon on $S^{d-1}$. Lemma \ref{LEM:TREE}, applied
to the logarithm of $W$, gives that for every $x\in (S^{d-1})^V$ there is a
spanning tree $F$ of $G$ and integers $(k_e:~e\in E(F))$ such that $k_e\le
d-1$, $k_{e_1}=1$, $\sum_e k_e = |E(G)|$, and
\[
W^G(x) \le \prod_{e\in E(F)} (W^e(x))^{k_e}.
\]
Since $W$ is bounded from below, this implies that
\begin{equation}\label{EQ:WGWF}
W^G(x) \le C_0 W^{e_1}(x) W^{F\setminus e}(x)^{d-1}
\end{equation}
for some constant $C_0$ independent of $x$. For a spanning tree $F$ of $G$ with
an ordered edge set $E(F)=\{e_1,e_2,\dots,e_{n-1}\}$, let $Y_{F}$ denote the
set of points $x\in (S^{d-1})^V$ for which $W^{e_1}(x)\ge W^{e_2}(x)\ge\dots$
and \eqref{EQ:WGWF} is satisfied. By the above, $\cup_F Y_F = (S^{d-1})^V$, and
so
\begin{align*}
t(G,W) &= \intl_{(S^{d-1})^V} W^G\,d\pi^V \le \sum_{F} \intl_{Y_{F}} W^G\,d\pi^V\\
&\lesssim \sum_{F} \intl_{Y_{F}}  W^{e_1}(x) W^{F\setminus e_1}(x)^{d-1}\,d\pi^V(x)\\
&\lesssim \max_{F} \intl_{Y_{F}}  W^{e_1}(x) W^{F\setminus e_1}(x)^{d-1}\,d\pi^V(x)
\end{align*}
So it suffices to prove that
\begin{equation}\label{EQ:TFFIN}
\intl_{Y_{F}}  W^{e_1}(x) W^{F\setminus e_1}(x)^{d-1}\,d\pi^V(x) =
\intl_{Y_{F}} W^{e_1}(x) W^{e_2}(x)^{d-1}\dots W^{e_{n-1}}(x)^{d-1} \,d\pi^V(x)
\end{equation}
is finite for every edge-ordered spanning tree $F$.

Disregarding the condition on the ordering of the edges, the random variables
$W^{e_i}(x)$ are independent. Indeed, selecting the images of the nodes in a
search order of the tree, each $W^{e_i}(x)$ will have the same distribution
even with one endpoint of $e_i$ already fixed, by symmetry. Let
$\vartheta_i(x)$ be the angle between $x_u$ and $x_v$, where $e_i=uv$. Then
\[
W^{e_i}(x) \lesssim \frac1{\sin\vartheta_i},
\]
and the density function of each $\vartheta_i(x)$ is
\[
f(\vartheta) \lesssim (\sin \vartheta)^{d-2}
\]
Let $T(F)$ denote the set of vectors $(\vartheta_1,\dots,\vartheta_{n-1})$ with
$0\le\vartheta_i\le\pi$ and $\sin\vartheta_1\le\dots\le\sin\vartheta_{n-1}$.
Then
\begin{align*}
\intl_{Y_{F}}& W^{e_1}(x) W^{e_2}(x)^{d-1}\dots W^{e_{n-1}}(x)^{d-1} \,d\pi^V(x)\\
&\lesssim \intl_{T(F)} \frac{(\sin \vartheta_1)^{d-2}\dots (\sin \vartheta_{n-1})^{d-2}}{\sin \vartheta_1
(\sin\vartheta_2)^{d-1}\dots(\sin\vartheta_{n-1})^{d-1}}\,d\vartheta_1\dots d\vartheta_{n-1}\\
&=  \intl_{T(F)} \frac{(\sin\vartheta_1)^{d-3}}{\sin\vartheta_2\dots\sin\vartheta_{n-1}}\,d\vartheta_1\dots d\vartheta_{n-1}\\
&\le \intl_{T(F)} \frac{1}{\sin\vartheta_2\dots\sin\vartheta_{n-1}}\,d\vartheta_1\dots d\vartheta_{n-1}
\end{align*}
Introducing the variables $\phi_i=\min(\vartheta_i,\pi-\vartheta_i)$ and the
set
\[
T'(F)=\{(\phi_1,\dots,\phi_{n-1}):~ 0\le\phi_i\le\pi/2,\ \phi_1\le\dots\le\phi_{n-1}\},
\]
we can go on as follows:
\begin{align*}
\intl_{T'(F)} &\frac{1}{\sin\phi_2\cdots\sin\phi_{n-1}}\,d\phi_1\dots d\phi_{n-1}
\lesssim\intl_{T'(F)} \frac{1}{\phi_2\cdots \phi_{n-1}}\,d\phi_1\dots d\phi_{n-1}\\
&=\intl_0^{\pi/2}\intl_0^{\phi_{n-2}} \dots \intl_0^{\phi_2} \frac{1}{\phi_2\cdots \phi_{n-1}}\,d\phi_1\dots d\phi_{n-1}\\
&=\intl_0^{\pi/2}\intl_0^{\phi_{n-2}} \dots \intl_0^{\phi_3} \frac{1}{\phi_3\cdots \phi_{n-1}}\,d\phi_2\dots d\phi_{n-1}\\
&=\dots=\intl_0^{\pi/2}1\,d\phi_{n-1} = \pi/2.
\end{align*}
This proves the theorem.
\end{proof*}

\begin{remark}\label{REM:BCCZ}
Theorem \ref{THM:SUBDIV-FIN} asserts that the subdivision of any simple graph
with all degrees at most $d-1$ has a finite Markovian probability distribution
on its ortho-homomorphisms. (This does not remain true for multigraphs, as
shown by the multigraph on two nodes connected by $d-1$ edges.)

As remarked after Lemma \ref{LEM:N-LOOSE}, $s_{d,2}\in L_p(S^{d-1},\pi)$ if and
only if $p<d-1$, so $W=s_{d,2}$ is an $L_p$-graphon for every $p< d-1$, as
defined by Borgs et al.~in \cite{BCCZ}. By one of the results of that paper,
all simple graphs with all degrees at most $d-2$ have a finite density in $W$;
our analysis shows that this remains valid for graphs with degrees bounded by
$d-1$ in the special case of $s_{d,2}$.
\end{remark}

\subsection{Paths and cycles}

Theorem \ref{THM:SUBDIV-FIN} implies that every even cycle $C_{2k}$ of length
at least $6$ has finite density in every dimension $d\ge 3$. We are going to
show that this holds for odd cycles without exception. But for later reference,
we start with discussing properties of ortho-homomorphisms of paths.

Let $P_k$ denote the path of length $k$. As we have seen (Example
\ref{EXA:TREE}), the ortho-homomorphism measure of paths is a probability
distribution. Let $\eta_d^k$ denote the marginal of this distribution on the
pair of endpoints ($k\ge1$). In particular, $\eta_d^1=\eta_d$. Easy properties
of $\eta_d^k$ are summarized in the next lemma.

\begin{lemma}\label{LEM:PATHS}
The distribution $\eta_d^k$ is absolutely continuous with respect to $\pi^2$
for all $d\ge 3$ and $k\ge 2$. The density function
$u_{d,k}(x,y)=(d\eta_d^k)/d\pi^2(x,y)$ is continuous for $k\le 5$ if $d=3$ and
for $k\ge 3$ if $d\ge 4$. For $k=2$, it has a singularity when $x\|y$; for
$d=3$ and $k=3$, it has a singularity when $x\perp y$; for $d=3$ and $k=4$, it
has a singularity when $x\|y$.
\end{lemma}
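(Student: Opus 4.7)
My approach is to analyze $u_{d,k}$ inductively, using the fact that a path of length $k$ is a path of length $k-1$ with one extra edge attached. In the random sequential construction, if we condition on the first endpoint $x_0=x$, then $x_k$ is obtained from $x_{k-1}$ by a uniform pick on the equator $x_{k-1}^\perp\cap S^{d-1}$. Writing $\mu_y$ for the uniform probability on $y^\perp\cap S^{d-1}$ and using reversibility of this one-step kernel on $(S^{d-1},\pi)$, one obtains the Chapman--Kolmogorov style recursion
\begin{equation*}
u_{d,k}(x,y) = (\Ab_d u_{d,k-1}(x,\cdot))(y) = \int_{y^\perp\cap S^{d-1}} u_{d,k-1}(x,z)\,d\mu_y(z) \qquad (k\geq 3),
\end{equation*}
where $\Ab_d$ is the operator from the introduction. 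The base case follows directly from Lemma~\ref{LEM:N-LOOSE}: inspecting the two-step walk $x_0,x_1,x_2$ and conditioning on $x_1$ shows that $(x_0,x_2)$ is distributed precisely as the $s_{d,2}$-density of that lemma, hence
\begin{equation*}
u_{d,2}(x,y) = s_{d,2}(x,y) = \frac{A_{d-1}A_{d-3}}{A_{d-2}^2}\cdot\frac{1}{\sin\theta(x,y)},
\end{equation*}
which is integrable against $\pi^2$ but singular at $\sin\theta=0$, i.e.\ when $x\parallel y$. Absolute continuity of every $\eta_d^k$, $k\geq 2$, then follows by induction, since $\Ab_d$ sends $L^1$-functions to $L^1$-functions.

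For continuity the key geometric observation is that averaging over the $(d{-}2)$-dimensional equator smooths singularities by one dimension. Fix $x$ and take $k=3$: the integrand $u_{d,2}(x,z)$ has a $1/\mathrm{dist}(z,\{\pm x\})$-type singularity, and the equator $y^\perp\cap S^{d-1}$ passes through $\pm x$ exactly when $y\perp x$. For $d\geq 4$ the equator has dimension $\geq 2$, so a $1/r$-singularity is uniformly integrable on a small neighbourhood of $\pm x$ inside it, and $u_{d,3}(x,y)$ depends continuously on $y$ by dominated convergence; for $d=3$ the equator is a great circle, and passing through $\pm x$ produces a logarithmic divergence, giving the claimed singularity of $u_{3,3}$ at $x\perp y$. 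Once $u_{d,3}$ is continuous (hence bounded on the compact space $S^{d-1}\times S^{d-1}$), one more application of the same averaging argument inductively yields continuity of $u_{d,k}$ for every $k\geq 3$ whenever $d\geq 4$.

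The hard part is the three-dimensional cases $k=4$ and $k=5$. For $u_{3,4}$ I would use the symmetric recursion $u_{3,4}(x,y)=\int_{S^2} u_{3,2}(x,z)u_{3,2}(z,y)\,d\pi(z)$ (obtained by splitting the $4$-path at its middle vertex) and analyse
\begin{equation*}
\int_{S^2} \frac{d\pi(z)}{\sin\theta(x,z)\sin\theta(z,y)}
\end{equation*}
in geodesic polar coordinates on $S^2$: when $y$ is bounded away from $\pm x$ the two pole-singularities of the integrand are disjoint $1/r$-poles on a $2$-sphere and each is integrable with continuous dependence on $y$; but as $y\to\pm x$ the two poles coalesce into a $1/r^2$-singularity on $S^2$, producing the expected logarithmic blow-up at $x\parallel y$. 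For $u_{3,5}$ I would then average $u_{3,4}(x,\cdot)$ over the great circle $y^\perp$: since $\int_0^\varepsilon \log(1/r)\,dr<\infty$, integration of a locally $\log$-singular function over a smooth curve is finite and depends continuously on the curve, giving continuity of $u_{3,5}$ and, by further one-step averaging, of every $u_{3,k}$ with $k\geq 5$.

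The main technical obstacle is making the singular-integral estimates uniform enough to extract both the precise logarithmic rates (to \emph{exhibit} the singularities at $k=3,4$ in dimension three) and continuity elsewhere. In the high-dimensional cases the $(d-2)$-dimensional averaging absorbs a $1/r$-singularity in a single step, whereas in dimension three each averaging improves regularity by only one power of $r$, so the coalescing-pole analysis near $\{x\parallel y\}$ for $u_{3,4}$ has to be carried out by hand in explicit coordinates, and dominated convergence has to be invoked with care when passing from the $\log$-singular $u_{3,4}$ to the (claimed) continuous $u_{3,5}$.
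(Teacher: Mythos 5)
Your proposal follows essentially the same route as the paper's own proof: the base case $u_{d,2}=s_{d,2}$ from Lemma~\ref{LEM:N-LOOSE}, the Chapman--Kolmogorov-type recursions (the paper's \eqref{EQ:U-REC} and \eqref{EQ:U-REC1}), and the same singularity bookkeeping ($1/r$ poles absorbed by $(d-2)$-dimensional equatorial averaging when $d\ge4$, logarithmic divergences at $x\perp y$ for $k=3$ and at $x\parallel y$ for $k=4$ when $d=3$, then boundedness and continuity for $k\ge5$). You simply carry out in more detail the singular-integral estimates that the paper asserts without proof, and you prove continuity for $k\ge5$ when $d=3$, which is the evidently intended reading of the statement's ``$k\le 5$''.
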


\begin{proof}
By Lemma \ref{LEM:N-LOOSE},
\[
u_{d,2}(x,y) = s_{d,2}(x,y) = \frac{A_{d-1} A_{d-3}}{A_{d-2}^2} \frac1{\sin(\measuredangle(x,y))},
\]
from which statements for $k=2$ are easily verified. It is easy to check that
\begin{equation}\label{EQ:U-REC}
u_{d,k+m}(x,y)=  \intl_{S^{d-1}} u_{d,k}(x,z)u_{d,m}(z,y)\,d\pi(z)
\end{equation}
for $k,m\ge 2$, and
\begin{equation}\label{EQ:U-REC1}
u_{d,k+1}(x,y)=  \intl_{S^{d-1}\cap x^\perp} s_{d,k}(z,y)\,d\pi_0(z),
\end{equation}
where $\pi_0$ is the uniform distribution on the $(d-2)$-dimensional sphere
$x^\perp\cap S^{d-1}$. Using this formula, we see that $u_{d,3}$ is a
continuous function for non-orthogonal pairs of points $(x,y)$, and it is not
hard to check that if $\eps=\measuredangle(x,y)-\pi/2 \to 0$, then
\[
u_{d,3}(x,y) =
  \begin{cases}
    O(\log\eps) & \text{if $d=3$}, \\
    O(1), & \text{if $d>3$}.
  \end{cases}
\]
From this it follows that $u_{d,k}$ is bounded (even continuous) for all $k\ge
3$ if $d\ge 4$. If $d=3$, then \eqref{EQ:U-REC} implies that $u_{d,4}$ still
has singularity if $x=y$; for $\eps=\measuredangle(x,y)\to 0$, we have
$u_{d,4}(x,y)=O(\log\eps)$. Using \eqref{EQ:U-REC} again, we see that $u_{d,k}$
is bounded and continuous on $S^{d-1}\times S^{d-1}$ for all $k\ge 5$.
\end{proof}

\begin{theorem}\label{COR:C-FIN}
The ortho-homomorphism density $t(C_k,d)$ is finite except if $d=3$ and $k=4$.
\end{theorem}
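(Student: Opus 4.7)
The case $k=3$ is already handled by Example \ref{EXA:TRIANGLE}, which gives the explicit value $t(K_3,d)=A_{d-1}A_{d-3}/A_{d-2}^2$. So the plan is to focus on $k\ge 4$, express $t(C_k,d)$ as a one-dimensional $\theta$-integral, and then read off finiteness from Lemma \ref{LEM:PATHS}.

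Label the nodes $v_1,\dots,v_k$ cyclically and use the ordering $p=(v_1,\dots,v_k)$. Then $d_p(v_1)=0$, $d_p(v_i)=1$ for $2\le i\le k-1$, and $d_p(v_k)=2$ (its backward neighbors being $v_1$ and $v_{k-1}$). Substituting into \eqref{EQ:F-DEF}, all factors $\Det(x_p(v))$ collapse to $1$ except the one for $v_k$, leaving
\[
f_p(x)=\frac{A_{d-1}A_{d-3}}{A_{d-2}^2}\cdot\frac{1}{\Det(x_1,x_{k-1})}.
\]
Under $\rho_p$, the vectors $(x_1,\dots,x_{k-1})$ are exactly a random sequential ortho-homomorphism of the path $P_{k-1}$, while $x_k$ does not enter $f_p$. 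Integrating out $x_k$ and the internal vertices, and using Lemma \ref{LEM:PATHS} for the marginal of $(x_1,x_{k-1})$, gives
\[
t(C_k,d)=\frac{A_{d-1}A_{d-3}}{A_{d-2}^2}\intl_{(S^{d-1})^2}\frac{u_{d,k-2}(x,y)}{\Det(x,y)}\,d\pi^2(x,y).
\]
By rotational symmetry, $u_{d,k-2}(x,y)$ is a function of $\theta=\measuredangle(x,y)$ alone; call it $\tilde u_{d,k-2}(\theta)$. Then finiteness of $t(C_k,d)$ is equivalent to finiteness of
\[
\intl_0^\pi \tilde u_{d,k-2}(\theta)\,(\sin\theta)^{d-3}\,d\theta.
\]

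The rest is case analysis via Lemma \ref{LEM:PATHS}. If $d\ge 4$ and $k\ge 5$, then $\tilde u_{d,k-2}$ is continuous and $(\sin\theta)^{d-3}$ is integrable on $[0,\pi]$. If $d\ge 4$ and $k=4$, then $\tilde u_{d,2}\asymp 1/\sin\theta$, and the integrand is of order $(\sin\theta)^{d-4}$, still integrable since $d\ge 4$. If $d=3$ and $k\ge 7$, $\tilde u_{3,k-2}$ is continuous and the integrand is bounded. If $d=3$ and $k\in\{5,6\}$, the singularity of $\tilde u_{3,k-2}$ is only logarithmic (at $\theta=\pi/2$ for $k=5$, at $\theta=0$ for $k=6$) while $(\sin\theta)^{d-3}=1$, so the integral converges. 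Finally, if $d=3$ and $k=4$, then $\tilde u_{3,2}(\theta)\asymp 1/\sin\theta$ and the integrand is $\asymp 1/\sin\theta$, which fails to be integrable near $\theta=0$ and $\theta=\pi$; this produces the single excluded case.

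The main substantive step is the reduction to the path density: one must note that with the cyclic ordering, $v_k$ is the unique vertex with backward degree exceeding $1$, so its $\Det$ is the only source of singularity in $f_p$, and the remaining variables $(x_1,\dots,x_{k-1})$ have exactly the path distribution of Lemma \ref{LEM:PATHS}. Once this is in place, the analysis is routine asymptotics in $\theta$, with $(k,d)=(4,3)$ being the boundary case where the $1/\sin\theta$ singularity of the path density collides with the $1/\Det$ factor and the logarithmic divergence becomes genuine.
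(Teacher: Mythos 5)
Your argument is correct, but it takes a genuinely different (and more uniform) route than the paper. The paper treats the cases separately: even cycles of length at least $6$ are deduced from the subdivision result (Theorem~\ref{THM:SUBDIV-FIN}), the triangle from Example~\ref{EXA:TRIANGLE}, and odd cycles $C_{2r+1}$, $r\ge 2$, by writing $t(C_{2r+1},d)=\int u_{d,2}\,u_{d,2r-1}\,d\pi^2$ and bounding the two factors on complementary angular regions via Lemma~\ref{LEM:PATHS}; the case $(d,k)=(3,4)$ is noted separately. Your identity $t(C_k,d)=c\int u_{d,k-2}(x,y)\,\Det(x,y)^{-1}\,d\pi^2$ is exactly the same edge-plus-path decomposition (since $u_{d,2}\propto 1/\Det$), but you derive it for every $k\ge 4$ from the cyclic ordering and \eqref{EQ:F-DEF}, apply it to even cycles as well --- so Theorem~\ref{THM:SUBDIV-FIN} is never needed for this statement --- and then reduce to a one-dimensional $\theta$-integral with explicit asymptotics, which in addition exhibits the divergence at $(d,k)=(3,4)$ directly and shows why $d=4$ is exactly the threshold for $C_4$. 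What the paper's coarser two-region bound buys is that it only needs boundedness of the path densities away from their singular sets, whereas your cases $d=3$, $k\in\{5,6\}$ rely on the quantitative $O(\log\eps)$ bounds for $u_{3,3}$ and $u_{3,4}$; these appear in the proof of Lemma~\ref{LEM:PATHS} rather than in its statement, but they are established there, so your use of them is legitimate. Two cosmetic points: the path on $v_1,\dots,v_{k-1}$ has length $k-2$ (it is $P_{k-2}$ in the paper's notation, consistent with your use of $u_{d,k-2}$), and for $d=3$, $k=6$ the singularity of $u_{3,4}$ sits at parallel pairs, i.e.\ at both $\theta=0$ and $\theta=\pi$; by the symmetry $u_{d,k}(x,y)=u_{d,k}(x,-y)$ both endpoints carry only logarithmic singularities, so integrability is unaffected.
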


\begin{proof}
For even cycles longer than $4$ we already know this by Theorem
\ref{THM:SUBDIV-FIN}; also for $k=3$, by the computations of Example
\ref{EXA:TRIANGLE}. Let $k=2r+1$, $r\ge2$. Then, using any ordering of the
nodes, we get that
\[
t(C_{2r+1},d) = \intl_{S^{d-1}} u_{d,2}u_{d,2r-1} \,d\pi^2.
\]
Here $u_{d,2}<C_1$ on $S_1=\{(x,y):~\pi/4\le \measuredangle(x,y)\le 3\pi/4\}$
and $u_{d,2r-1}<C_2$ on $S_2=(S^{d-1})^2\setminus S_1$, by Lemma
\ref{LEM:PATHS}, for some constants $C_i$. Thus
\begin{align*}
t(C_{2r+1},d) &= \intl_{S^{d-1}} u_{d,2}u_{d,2r-1} \,d\pi^2 \le
C_1 \intl_{S_1} u_{d,2r-1} \,d\pi^2 + C_2 \intl_{S_2} u_{d,2} \,d\pi^2\\
&\le C_1 t(P_2,d)+ C_2 t(P_1,d),
\end{align*}
which is finite.

For $d=3$ and $k=4$, the graph $C_4$ does not satisfy the $3$-sparsity
condition, and indeed, as we have seen, the ortho-homomorphism measure has no
natural definition. Formula \eqref{EQ:BIPART} applies but the integral is
infinite.
\end{proof}

These computations imply that for $k\ge 5$, $u_{d,k}$ is a continuous function
on $S^{d-1}\times S^{d-1}$, so $u_{d,k}(x,x)$ is well defined, and
\begin{equation}\label{EQ:CKD-INT}
t(C_k,d) = \int_{S^{d-1}} u_{d,k}(x,x)\,d\pi(x).
\end{equation}
More explicit formulas for these densities will be given in Section
\ref{SEC:SPECTRAL} based on the spectrum of the graphop $\Ab$.

\subsection{Crowns}

For $n\ge 4$, we define the {\it $n$-crown} $\text{\rm Cr}_n$ as the bipartite
graph with bipartition $U\cup W$, where $U=\{u_1,\dots,u_n\}$,
$W=\{w_1,\dots,w_n\}$, and $w_i$ is connected to $u_{i-1}$, $u_{i}$, and
$u_{i+1}$ (subscripts modulo $n$). The $4$-crown is the skeleton of the
$3$-dimensional cube. For odd $n$, the $n$-crown is also known as the
``M\"obius ladder'', for even $n$, as the ``prism''. The $n$-crown is
$4$-sparse if $n\ge 4$.

\begin{prop}\label{PROP:3PATH}
{\rm(a)} If $d\ge4$, $n\ge 4$ and $(d,n)\notin \{(4,4), (4,5), (4,6), (5,4)\}$,
then $t(\text{\rm Cr}_n,d)$ is finite. {\rm(b)} $t(\text{\rm Cr}_4,4)$ is
infinite.
\end{prop}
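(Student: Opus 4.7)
By the bipartite formula \eqref{EQ:BIPART},
\[
t(\text{\rm Cr}_n,d) = C_{n,d}\intl_{(S^{d-1})^n}\prod_{i=1}^n \frac{1}{\Det(x_{i-1},x_i,x_{i+1})}\,d\pi^n(x),
\]
with indices mod $n$; each variable $x_j$ appears in exactly three factors (those indexed by $j-1,j,j+1$). For $d\ge 6$, the plan is to apply the generalized H\"older inequality (Lemma \ref{LEM:GHOLDER}) with uniform exponents $p_i=3$: the vertex constraint $\sum_{i:B_i\ni j}1/p_i=1$ is satisfied, and by Lemma \ref{LEM:N-LOOSE} the $s_{d,3}$-type factor lies in $L^3$ exactly when $3<d-2$. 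For the harder cases $d=5,\,n\ge 5$ and $d=4,\,n\ge 7$, any uniform exponent $p_i<d-2$ forces $\sum_{i:B_i\ni j}1/p_i>1$, so one needs a mixed-exponent H\"older argument: select a subset $I\subseteq\{1,\dots,n\}$ of ``heavy'' triples and use H\"older exponents just below $d-2$ for these, while bounding each remaining ``light'' factor by a constant times its local average, which is controlled by Lemma \ref{LEM:DETAVERAGE}. The stated thresholds on $n$ are precisely what is needed in order to choose $I$ with each variable $x_j$ lying in at most $\lfloor d-2\rfloor-1$ heavy triples, keeping the H\"older sum condition strict.

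\textbf{Part (b).} To show $t(\text{\rm Cr}_4,4)=\infty$, localize near the singular submanifold $\Sigma\subset(S^3)^4$ of configurations with all four $x_j$'s in a common $2$-plane $P\subset\R^4$ (codimension $4$). Fix $P=\lin(e_1,e_2)$ and use coordinates $x_j\approx(\cos\phi_j,\sin\phi_j,\eta_{j,3},\eta_{j,4})$ near $\Sigma$. A direct computation gives
\[
\Det(x_{i-1},x_i,x_{i+1})=\sqrt{L_i(\eta^{(3)})^2+L_i(\eta^{(4)})^2}+O(|\eta|^2),
\]
where $L_i(\xi)=\sin(\phi_{i+1}-\phi_i)\xi_{i-1}-\sin(\phi_{i+1}-\phi_{i-1})\xi_i+\sin(\phi_i-\phi_{i-1})\xi_{i+1}$; let $L$ be the $4\times 4$ matrix of these forms. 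For $\phi$ with $L$ invertible, the change of variables $u=L\eta^{(3)}$, $v=L\eta^{(4)}$ reduces the inner $\eta$-integral to $|\det L(\phi)|^{-1}$ times a convergent product of planar integrals $\int(u^2+v^2)^{-1/2}\,du\,dv$. Explicit computation shows $\det L$ is not identically zero (e.g.~nonzero at $\phi_j=j\pi/3$) but vanishes to first order along a codimension-$1$ subset of $\phi$-space; in particular $\det L=0$ at the ``square'' configuration $\phi_j=j\pi/2$, where $L$ has rank only $2$ (its first and third rows coincide, as do the second and fourth). Hence $\int|\det L(\phi)|^{-1}\,d\phi=\infty$ locally, forcing $t(\text{\rm Cr}_4,4)=\infty$.

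\textbf{Main obstacle.} The most delicate step is the mixed-exponent selection in part (a) for the borderline cases $(d,n)\in\{(4,7),(5,5)\}$, where exponents must be chosen exactly at the boundary of integrability, and where the averaging bound from Lemma \ref{LEM:DETAVERAGE} must be combined with H\"older without reintroducing the singular behavior. Secondarily, in part (b) one must verify that the $O(|\eta|^2)$ remainder in the Taylor expansion does not cancel the leading-order divergence, which is a standard but necessary check for such local analyses.
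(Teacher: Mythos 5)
Your H\"older argument only covers $d\ge 6$, which is already a consequence of the paper's general bound (Theorem \ref{THM:BIP-FINITE1}, Corollary \ref{COR:BIP-FINITE2}); the actual content of part (a) is the cases $d=4$, $n\ge 7$ and $d=5$, $n\ge 5$, and there your ``mixed-exponent'' scheme cannot be made to work. In the crown each variable $x_j$ lies in exactly three factors $1/\Det(x_{i-1},x_i,x_{i+1})$, and $s_{d,3}\in L_p$ only for $p<d-2$; summing the vertex constraints $\sum_{i:B_i\ni j}1/p_i\le 1$ over $j$ gives $\sum_i 1/p_i\le n/3$, while each admissible exponent forces $1/p_i>1/(d-2)\ge 1/3$, so for $d\in\{4,5\}$ no exponent assignment exists at all --- not just at a borderline. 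Nor can the ``light'' factors be discarded: they are unbounded, and Lemma \ref{LEM:DETAVERAGE} bounds the local average $D_r$ by a constant times $1/\Det$ (not the reverse), so it does not tame a singular factor. The paper's proof uses a genuinely different, probabilistic device: writing $D_i=Y_{i-1}Y_iZ_i$ and conditioning on the event that $D_nD_1$ is the largest product of adjacent factors, one drops those two factors at the price of the exponent $n/(n-2)$ (since $W\ge W_0^{n/(n-2)}$ on that event), and the remaining product $W_0=Y_1Y_2^2\cdots Y_{n-2}^2Y_{n-1}Z_2\cdots Z_{n-1}$ consists of \emph{independent} variables whose negative moments are finite exactly when $(d-3)(n-2)>4$. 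This is why the threshold improves with $n$, an effect invisible to any vertexwise H\"older bookkeeping, where the local incidence structure is the same for all $n$. (Also note $(4,5),(4,6),(5,4)$ are excluded from the statement and left open in the paper, so your plan to ``prove'' $(4,7)$ and $(5,5)$ as hardest borderline cases is aiming at the right cases but with a method that cannot reach them.)

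\textbf{Part (b).} Your local model is set up correctly up to the formula for $L_i$, but the key factual claim is false: $\det L(\phi)\equiv 0$. Indeed, the identity $\sin(\phi_{i+1}-\phi_i)\sin\phi_{i-1}-\sin(\phi_{i+1}-\phi_{i-1})\sin\phi_i+\sin(\phi_i-\phi_{i-1})\sin\phi_{i+1}=0$ (and its cosine analogue) shows that $(\sin\phi_j)_j$ and $(\cos\phi_j)_j$ always lie in $\Ker L(\phi)$, so $\rk L\le 2$ for every $\phi$ --- in particular $\det L=0$ at $\phi_j=j\pi/3$, contrary to your assertion; geometrically the kernel is exactly the space of infinitesimal tilts of the common $2$-plane, which preserve coplanarity. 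This breaks your mechanism: had $L$ been generically invertible, the inner $\eta$-integral would be finite for a.e.\ $\phi$ (each singular stratum of $\prod_i|L_i\eta|^{-1}$ is then integrable) and would blow up only logarithmically near the rank-$2$ locus, so $\int|\det L|^{-1}d\phi=\infty$ would not imply divergence of the true integral. With the correct rank-$2$ structure the linearized inner integral does diverge for every $\phi$, but now the quadratic remainder is no longer a routine check: the relevant near-singular configurations have $|\eta|$ comparable to the localization radius, where the $O(|\eta|^2)$ terms are of the same size as $|L_i\eta|$, so a lower bound on the integrand cannot be read off from the linearization alone. The paper sidesteps all of this with an exact argument: the wedge inequality $|x_1\land x_2\land x_4|\le Y_1|x_2\land x_4|+Y_2|x_1\land x_4|$ (with $Y_i$ the distance of $x_i$ from $\lin(x_3,x_4)$) gives the pointwise bound $\prod_i\Det_i\le 4\max(Y_1,Y_2)^4$, and the exact densities ($\propto\cos\vartheta\sin\vartheta$ in $d=4$) yield $\E\bigl(\max(Y_1,Y_2)^{-4}\bigr)=\int\cot\vartheta_1\,d\vartheta_1=\infty$. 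To salvage your approach you would essentially have to work with the exact (curved) coplanarity variety rather than its linearization, which amounts to redoing the paper's computation.
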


\begin{proof}
(a) Let $x_i=x_{u_i}$ be independent random points of $S^{d-1}$. Let $\alpha_i$
be the angle between $x_i$ and $x_{i+1}$, and let $\vartheta_i$ be the angle
between the planes $\lin(x_{i-1},x_i)$ and $\lin(x_i,x_{i+1})$. Let
$Y_i=\sin\alpha_i = \Det(x_i,x_{i+1})$, $Z_i=\sin\vartheta_i$,
$D_i=\Det(x_{i-1},x_i,x_{i+1}) = Y_{i-1}Y_iZ_i$ and $W=Y_1^2\dots Y_n^2
Z_1\dots Z_n$. Then
\[
t(G,d) =C \E(W^{-1}),
\]
where the constant $C$ is computable by \eqref{EQ:BIPART}, but we don't need
this here. Let $B_i$ denote the event that $D_iD_{i+1}\ge D_jD_{j+1}$ for all
$j=1,\dots,n$. Clearly $\Pr(B_i)=1/n$ and $\E(W^{-1}~|~B_i)$ is independent of
$i$, which implies that $\E(W^{-1}~|~B_i)=\E(W^{-1})$.

Assume that $B_n$ occurs, and let $W_0=D_2\cdots D_{n-1}$. Then $D_nD_1 \ge
W^{2/n}$, and hence $W_0 = W/(D_1D_2) \le W^{(n-2)/n}$. Thus $W\ge
W_0^{n/(n-2)}$. Hence
\begin{align*}
\E(W^{-1}) &= \E(W^{-1}~|~B_n) \le \E\Big(W_0^{-n/(n-2)}~|~B_n\Big)
= \frac{\E\Big(W_0^{-n/(n-2)}\one_{B_n}\Big)}{\Pr(B_n)}\\
&=n\E\Big(W_0^{-n/(n-2)}\one_{B_n}\Big)\le n\E\Big(W_0^{-n/(n-2)}\Big).
\end{align*}
The advantage of considering $W_0$ is that we can write it as
\[
W_0 = Y_1 Y_2^2 \cdots  Y_{n-2}^2 Y_{n-1} Z_2\cdots Z_{n-1},
\]
and here all of the factors are independent random variables. So the
expectation of $W_0^{-n/(n-2)}$ is finite if and only if $\E(Y_1^{-n/(n-2)})$,
$\E(Y_i^{-2n/(n-2)})$ and $\E(Z_i^{-n/(n-2)})$ are finite. Clearly, finiteness
of the second expectation implies finiteness of the first one.

The expectations of powers of $Y_1$ and $Z_1$ are easy to compute: the density
function of (say) $\alpha_1$ is proportional to $(\sin\alpha_1)^{d-2}$, and so
\[
\E(Y_1^{-2n/(n-2)})=\frac{\intl_0^{\pi} (\sin\alpha)^{d-2-2n/(n-2)}\,d\alpha}{\intl_0^{\pi} (\sin\alpha)^{d-2}\,d\alpha}.
\]
The integral in the numerator is finite if the exponent of $\sin\alpha$ is
larger than $-1$; this means that
\begin{equation}\label{EQ:Y1}
d-2-2n/(n-2)>-1.
\end{equation}
Similarly, the density function of $Z_1$ is proportional to
$(\sin\vartheta_1)^{d-3}$ (the density of the angle between two random points
on the equator), and hence
\[
\E(Z_1^{-n/(n-2)})=\frac{\intl_0^{\pi} (\sin\alpha)^{d-3-n/(n-2)}\,d\alpha}{\intl_0^{\pi} (\sin\alpha)^{d-2}\,d\alpha}.
\]
As before, this is finite if and only if $d-3-n/(n-2)>-1$. It is not hard to
see that \eqref{EQ:Y1} is stronger. Rewriting \eqref{EQ:Y1} as $(d-3)(n-2)>4$,
we see that this holds for $d=4$ and $n\ge 7$, $d=5$ and $n\ge 5$ and $d\ge 6$,
$n\ge 4$.

\medskip

(b) For any three unit vectors $y_1,y_2,y_3$, we have
\[
|y_1\land y_3| + |y_2\land y_3|\ge |y_1\land y_2|.
\]
For vectors of arbitrary length, this gives
\[
|z_1\land z_3|\,|z_2| + |z_2\land z_3|\,|z_1| \ge |z_1\land z_2|\,|z_3|.
\]
Applying this to the vectors $z_i=x_i/x_4$, we get
\[
|x_1\land x_3\land x_4|\,|x_2\land x_4| + |x_2\land x_3\land x_4|\,|x_1\land x_4| \ge |x_1\land x_2\land x_4|\,|x_3\land x_4|.
\]
Setting $Y_i = |x_i\land x_3\land x_4|/|x_3\land x_4|$ (this is the distance of
$x_i$ from the plane $\lin(x_3,x_4)$, we get from this
\[
|x_1\land x_2\land x_4| \le Y_1 |x_2\land x_4| + Y_2\,|x_1\land x_4|
\le Y_1+Y_2.
\]
The same upper bound can be given on $|x_1\land x_2\land x_3|$, and trivially
\[
|x_i\land x_3\land x_4| = Y_i |x_3\land x_4| \le Y_i.
\]
The denominator in \eqref{EQ:BIPART} can be estimated as
\[
|x_1\land x_2\land x_3|\,|x_1\land x_2\land x_4|\,|x_1\land x_3\land x_4|\,|x_2\land x_3\land x_4|
\le (Y_1+Y_2)^2Y_1Y_2\le 4\max(Y_1,Y_2)^4.
\]
Note that the distributions of $Y_1$ and $Y_2$ do not depend on $x_3$ and
$x_4$, and so we can fix $L=\lin(x_3,x_4)$. Similarly as in the proof of Lemma
\ref{LEM:XEXPECT}, let $\vartheta_i$ be the angle between $x_i$ and $L$
($0\le\vartheta\le\pi/2$), then $Y_i=\sin\vartheta_i$. For a fixed $\vartheta$,
points at this distance $\sin\vartheta$ from $L$ form the direct product of two
circles $L\cap (\cos\vartheta)S^1$ and $L^\perp \cap (\sin\vartheta)S^1$, and
so their density is proportional to $\cos\vartheta\sin\vartheta$. Hence
$\E\big(\max(Y_1,Y_2)^{-4}\big)$ is proportional to
\begin{align*}
&\intl_{[0,\pi/2]^2}\frac{\sin\vartheta_1\cos\vartheta_1\sin\vartheta_2\cos\vartheta_2}{\max(\sin\vartheta_1,\sin\vartheta_2)^4}
\,d\vartheta_2\,d\vartheta_1 = 2 \intl_{\vartheta_2\le \vartheta_1}
\frac{\cos\vartheta_1\sin\vartheta_2\cos\vartheta_2}{(\sin\vartheta_1)^3} \,d\vartheta_2\,d\vartheta_1\\
&= \intl_{[0,\pi/2]} \frac{\cos\vartheta_1(\sin\vartheta_1)^2}{(\sin\vartheta_1)^3} \,d\vartheta_1
=\intl_{[0,\pi/2]} \frac{\cos\vartheta_1}{\sin\vartheta_1} \,d\vartheta_1,
\end{align*}
which is infinite.
\end{proof}

\section{Spectral formulas}\label{SEC:SPECTRAL}

\subsection{Powers of the graphop}

Let $\HH_d$ denote the function space $L^2(S^{d-1},\pi)$ where $\pi$ is the
uniform measure on $S^{d-1}$. If $Q$ is an element in the orthogonal group
$\Ort(d)$, then it also acts naturally on $\HH_d$ by $(fQ)(x)=f(Q(x))$ where
$f\in\HH_d$ and $x\in S^{d-1}$. We say that a linear operator $\Tb$ on $\HH_d$
is {\it rotation invariant}, if $Q\Tb Q^{-1}=\Tb$ holds for every $Q\in
\Ort(d)$.

Under the general correspondence between measures and linear operators, we can
define a bounded linear operator $\Ab=\Ab_d:~\HH_d\to \HH_d$ by letting
$(\Ab_df)(x)$ be the average of $f$ on the $(d-2)$-subsphere $x^\perp\cap
S^{d-1}$. (It is not hard to see that this is well-defined for almost all $x$;
see \cite{BackSz}.)

This operator satisfies
\begin{equation}\label{EQ:GG-DEF}
\langle  \Ab f, g\rangle= \intl_{S^{d-1}\times S^{d-1}} f(x)g(y)\,d\eta(x,y).
\end{equation}
for every $f\in L_p(S^{d-1},\pi)$ and $g\in L_q(S^{d-1},\pi)$ (see
\cite{BackSz}). This implies that it is self-adjoint. It is trivial that $\Ab$
is {\it monotone}: if $f\ge0$ then $\Ab f\ge 0$. We also note that $\Ab$ is
$1$-regular: $\Ab\one_{S^{d-1}}=\one_{S^{d-1}}$. This operator also has the
geometric property that it is {\it rotation invariant}, i.e.,

We say that an operator $\Tb:~L_2(S^{d-1})\to L_2(S^{d-1})$ is {\it
represented} by a measurable function $u:~S^{d-1}\to\R$ if
\[
(\Tb f)(x) = \intl_{S^{d-1}} u(x,y)f(y)\,d\pi(y).
\]
Then $\Tb$ is a Hilbert--Schmidt integral operator. The operator $\Ab$ cannot
be represented by any function, but its higher powers can: the operator $\Ab^k$
is represented by the function $u_{d,k}$ for all $k\ge 2$.

\subsection{Spherical harmonics}\label{SSEC:CYCLE-SPEC}

In this section we study the spectrum of the orthogonality operator $\Ab=\Ab_d$
for a fixed $d\ge 3$. As an application, we obtain formulas for the
ortho-homomorphism densities $t(C_k,d)$ in the next section.

The fact that $\Ab^k$ is a Hilbert--Schmidt operator for $k\ge2$ implies that
$\Ab$ is a compact operator. Let $\lambda_{n}$ $(n=0,1,2,\dots)$ be the
distinct nonzero eigenvalues of $\Ab$, and let $\Tb_n$ denote the orthogonal
projection onto the eigensubspace $W_n$ belonging to $\lambda_{n}$. Then the
expansion
\begin{equation}\label{EQ:SPECTRAL2}
\Ab^k = \sum_{n=0}^\infty\lambda_{n}^k \Tb_n
\end{equation}
is convergent in operator norm for $d=3$ and $k\ge 5$. Our goal is to give more
explicit formulas for $\lambda_n$ and $W_n$.

It is well-known that the action of the orthogonal group $\Ort(d)$ on the
Hilbert space $\HH_d=L^2(S^{d-1})$ has a unique decomposition into distinct
irreducible representations. These representations are carried by subspaces
$W_0,W_1,W_2,\dots$ of $L^2(S^{d-1})$, where $W_n$ consists of polynomials of
degree $n$ and has dimension
\begin{equation}\label{EQ:DIMW}
\dim(W_n)=\binom{d+n-1}{d-1}-\binom{d+n-3}{d-1}.
\end{equation}
Since the operator $\Ab$ is rotation invariant, standard arguments show that
each eigenspace of $\Ab$ is invariant under $\Ort(d)$. Hence each $W_n$ is
contained in one of the eigenspaces of $\Ab$ and thus elements in $W_n$ are
eigenvectors of $\Ab$ with identical eigenvalue $\lambda_n$.

The Gegenbauer polynomials $C_n^{(\alpha)}(x)$ (also called ultraspherical
polynomials) are orthogonal polynomials on $[-1,1]$ with respect to the weight
function $(1-x^2)^{\alpha-1/2}$. (In particular, $C_n^{(1/2)}(x)$ is the $n$-th
Legendre polynomial.) The significance of these polynomials for us is that if
$\alpha=d/2-1$, $n\in\mathbb{N}$ and $y\in\R^d$ is a fixed unit vector, then
the function $x\mapsto C_n^{(\alpha)}(x\cdot y)$ defined for $x\in S^{d-1}$ is
an eigenfunction of the operator $\Ab$ (called a {\it zonal spherical harmonic
function}). Furthermore, the corresponding eigenvalues (with appropriate
multiplicities) describe all the eigenvalues of $\Ab$.

It is not hard to calculate the eigenvalues corresponding to these functions.
It is clear that $f_n(y)=C_n^{(\alpha)}(1)$ and that $(\Ab
f_n)(y)=C_n^{(\alpha)}(0)$, and so the eigenvalue is
$C_n^{(\alpha)}(0)/C_n^{(\alpha)}(1)$. Fortunately, these special values of the
Gegenbauer polynomials are easily derived from the classical series expansion
\cite{Geg}
\begin{equation}\label{EQ:GEGEN2}
\sum_{n=0}^\infty C_n^{(\alpha)}(x)y^n = \frac{1}{(1-2xy+y^2)^{\alpha}}.
\end{equation}
Substituting $x=0$ and $x=1$, we get
\begin{equation}\label{EQ:GEGEN0}
C_n^{(\alpha)}(0)=
  \begin{cases}
    \displaystyle(-1)^r\binom{r+\alpha-1}{r}, & \text{if $n=2r$ is even}, \\
    0, & \text{if $n$ is odd},
  \end{cases}
\end{equation}
and
\begin{equation}\label{EQ:GEGEN1}
C_n^{(\alpha)}(1)=\binom{n+2\alpha-1}{n}.
\end{equation}

In our case when $\alpha=d/2-1$, both quantities are rational numbers. From
these formulas we obtain that the eigenvalue $\lambda_{n}$ of $\Ab$
corresponding to $n$-th zonal harmonic function is
\begin{equation}\label{EQ:LAMBDA-N}
\lambda_{n}=\frac{C_n^{(d/2-1)}(0)}{C_n^{(d/2-1)}(1)}=
  \begin{cases}
    \displaystyle(-1)^{r}\frac{(d-3)!!\,(2r-1)!!}{(2r+d-3)!!}, & \text{if $n=2r$ is even}, \\
    0, & \text{if $n$ is odd}.
  \end{cases}
\end{equation}
For even $d$, the formula for $\lambda_{n}$ can be simplified:
\begin{equation}\label{EQ:LAMBDA-2R}
\lambda_{n}=
  \begin{cases}
    \displaystyle (-1)^{n/2}\frac{(d-3)!!}{(n+1)(n+3)\dots(n+d-3)}, & \text{if $n$ is even}, \\
    0, & \text{if $n$ is odd}.
  \end{cases}
\end{equation}
Note that in this case the numerator is a constant (we consider $d$ fixed), and
the denominator is a polynomial in $n$. If $d=4$, then
$\lambda_{n}=(-1)^{n/2}/(n+1)$ for even $n$ and $\lambda_{n}=0$ for odd $n$.

The projections $\Tb_n$ to these subspaces can be described as well. The fact
that $W_n$ is finite dimensional implies that $\Tb_n$ is an integral kernel
operator representable by some measurable function $Q_n:~S^{d-1}\times
S^{d-1}\to\R$. Furthermore, since $W_n$ is an eigenspace of $\Ab$, each $Q_n$
is invariant under the natural action of the orthogonal group $\Ort(d)$. This
implies $Q_n(x,y)$ depends on the scalar product of $x$ and $y$ only. In other
words, there is a measurable function $f_n:~[-1,1]\to\R$ such that
$Q_n(x,y)=f_n(x\cdot y)$. This also means the for every fixed $y\in S^{d-1}$,
the map $x\mapsto f_n(x\cdot y)$ is in $W_n$ and thus these functions are the
zonal spherical harmonic functions. We obtain that
$f_n(x)=C_n^{(\alpha)}(x)c_n$ for some constant $c_n$ where $\alpha=(d-2)/2$.
The fact that $Q_n$ represents a projection onto $W_n$ implies that
$\dim(W_n)=\mathbb{E}_x Q_n(x,x)=C_n^{(\alpha)}(1)c_n$ and thus
$c_n=\dim(W_n)/C_n^{(\alpha)}(1)$. So
\[
f_n(x)=\dim(W_n)C_n^{(\alpha)}(x)/C_n^{(\alpha)}(1),
\]
and for $x\in S^{d-1}$ and $f\in\HH_d$,
\begin{equation}\label{EQ:TBN}
(\Tb_n f)(x) = \int_{S^{d-1}} f_n(x\cdot y)f(y)\,d\pi(y).
\end{equation}
We can apply \eqref{EQ:SPECTRAL2} (formally) for $k=1$:
\begin{equation}\label{EQ:AFX}
\Ab f(x) = \sum_{n=0}^\infty \lambda_{n}(\Tb_n f)(x)
=  \sum_{n=0}^\infty \lambda_{n} \int_{S^{d-1}} f_n(x\cdot y)f(y)\,d\pi(y).
\end{equation}
It is not clear when this infinite sum converges.

\subsection{Cycle densities}

We start with the expansion
\begin{equation}\label{EQ:CYCLE-SPEC}
t(C_k,d)= \tr(\Ab_d^k)=\sum_{n=0}^\infty \lambda_{n}^k\dim(W_n),
\end{equation}
convergent for $d=3$ and $k\ge 5$, and for $d\ge 4$ and $k\ge 4$. Substituting
values computed above, we get the formula
\begin{align}\label{EQ:T-CYCLE}
t(C_k,d) = \sum_{r=0}^\infty  &\left(\binom{d+2r-1}{d-1}-\binom{d+2r-3}{d-1}\right)
\left((-1)^{r}\frac{(d-3)!! (2r-1)!!}{(2r+d-3)!!}\right)^k.
\end{align}

If $d=4$, we obtain much nicer formulas:
\[
t(C_k,4)=\sum_{r=0}^\infty (2r+1)^{2-k}=\zeta(k-2)(1-2^{2-k})
\]
if $k$ is even and
\[
t(C_k,4)=\sum_{r=0}^\infty (2r+1)^{2-k}(-1)^r
\]
if $k$ is odd. In particular, $t(C_4,4)=\pi^2/8$. The case of a triangle is
interesting: the formula specializes to
\[
t(C_3,4)=1-\frac13+\frac15-\dots =\frac{\pi}{4}.
\]
The series is not absolute convergent, and we have no good argument to justify
the order in which it is summed; but the computations in Example
\ref{EXA:TRIANGLE} show that this is the ``right'' order.

If $d=3$, then the eigenvalues of $\Ab$ are $(-1/4)^r\binom{2r}{r}$ with
multiplicity $4r+1$ for $r=0,1,2,\dots$. This leads to
\[
t(C_k,3)=\sum_{r=0}^\infty (4r+1)(-1/4)^{rk}\binom{2r}{r}^k.
\]
Comparing with \eqref{EQ:TC6}, we get the identity
\begin{equation}\label{EQ:C6-ID}
\int\limits_{[0,\pi]^3} \frac{d\alpha\,d\beta\,d\theta}{\sqrt{1-\big(\cos\alpha\cos\beta
- \sin\alpha\sin\beta\cos\theta\big)^2}}
= \frac{\pi^4}{2} \sum_{r=0}^\infty (4r+1)4^{-6r} \binom{2r}{r}^6.
\end{equation}

\section{Approximations by graphons and graphs}

In this chapter we explain how the orthogonality graphs $H_d$ and
ortho-homomorphism densities fit into graph limit theory. Our goal is to find
sequences of graphons and finite graphs which approximate $H_d$ (or more
precisely the operator $\Ab_d$) in the sense that ortho-homomorphism densities
become limits of classical subgraph densities. As a consequence we obtain that
ortho-homomorphism densities behave a lot like subgraph densities. They satisfy
a variety of inequalities that are known in the graph theoretic framework. A
very interesting example is Sidorenko's conjecture, which has been proved for
quite a few classes of graphs. The ortho-homomorphism version of this
conjecture is especially nice: It says that the ortho-homomorphism density of
any bipartite graph in $H_d$ (for $d\geq 3$) is at least $1$. Our results in
this chapter will imply this for every bipartite graph that satisfies the
finite version of the conjecture.

For $x\in S^{d-1}$ and $0<r<\pi$, let $B_r(x)$ be the set of points $y \in
S^{d-1}$ such that $d(x,y)<r$ (where $d$ is the spherical distance), and let
$V_r=\pi(B_r(x))$. Let $f_{x,r}=\one_{B_{x,r}}$ be the indicator function of
$B_{r,x}$.

\begin{lemma}\label{invcomm}
If $A$ is a rotation invariant bounded operator on $\HH$, then $A$ is
self-adjoint, i.e., $A^*=A$. Any two rotation invariant bounded operators
commute.
\end{lemma}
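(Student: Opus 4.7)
The plan is to establish self-adjointness first and then obtain commutativity as an immediate corollary. For the first step, given two points $x,y\in S^{d-1}$, I would invoke the orthogonal reflection $Q\in\Ort(d)$ in the hyperplane bisecting $x$ and $y$ (or the identity if $x=y$): it satisfies $Qx=y$ and $Qy=x$, and since it preserves spherical distance, the induced unitary operator $U_Q$ on $\HH$ sends $f_{x,r}$ to $f_{y,r}$ and vice versa. Rotation invariance of $A$ yields $U_Q A = A U_Q$, and unitarity of $U_Q$ then gives
\[
\langle Af_{x,r},f_{y,r}\rangle = \langle U_QAf_{x,r},U_Qf_{y,r}\rangle = \langle Af_{y,r},f_{x,r}\rangle.
\]
Because $\HH$ consists of real-valued $L^2$ functions (the setting in which the paper's operators are naturally defined) and the $f_{x,r}$ are real, the right-hand side equals $\langle f_{x,r},Af_{y,r}\rangle$. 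Finite linear combinations of the indicators $f_{x,r}$ are dense in $\HH$ since the spherical balls $B_r(x)$ generate the Borel $\sigma$-algebra of $S^{d-1}$, so the identity $\langle Af,g\rangle = \langle f,Ag\rangle$ extends by continuity to all of $\HH$, i.e., $A=A^*$.

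For the commutativity assertion, note that if $A$ and $B$ are both rotation invariant, then so is their product, since for every $Q\in\Ort(d)$
\[
Q(AB)Q^{-1} = (QAQ^{-1})(QBQ^{-1}) = AB.
\]
Applying the self-adjointness statement to $AB$ yields $(AB)^* = AB$, while the self-adjointness of $A$ and $B$ separately gives $(AB)^* = B^*A^* = BA$. Hence $AB=BA$.

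The main obstacle in this plan is the self-adjointness step, which rests on the existence of an orthogonal involution swapping any chosen pair of points on the sphere and on density of ball indicators in $\HH$; both are standard, but they must be assembled carefully. One should also note that the statement implicitly assumes the real setting: in a purely complex interpretation of $\HH$, the operator $iI$ is rotation invariant yet not self-adjoint, so strict $\C$-linearity would break the lemma. Once self-adjointness is in hand, commutativity follows as a purely formal consequence, with no additional analytic input required.
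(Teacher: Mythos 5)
Your proposal is correct and follows essentially the same route as the paper: a reflection swapping $x$ and $y$ gives the symmetry identity $\langle Af_{x,r},f_{y,r}\rangle=\langle f_{x,r},Af_{y,r}\rangle$ on ball indicators, density of their linear span plus boundedness extends it to all of $\HH$, and commutativity follows formally since $AB$ is again rotation invariant, so $AB=(AB)^*=B^*A^*=BA$. The only nuance is that your swap identity holds for indicators of a \emph{common} radius $r$, so the density step should (as the paper does) approximate $f,g$ within the span of fixed-radius indicators $W_r$ for a single small $r$, which suffices.
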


\begin{proof}
For any two points $x,y\in S^{d-1}$ and $0<r<\pi$, there is a reflection $R\in
\Ort(d)$ in a hyperplane such that $R(x)=y$ and $R(y)=x$. For this reflection
we have $f_{x,r}=f_{y,r}R$ and $f_{y,r}=f_{x,r}R$ for every $r>0$. It follows
that
\begin{equation}\label{locsymm}
\langle f_{x,r}A,f_{y,r}\rangle=\langle f_{x,r}RAR,f_{y,r}\rangle
=\langle f_{x,r}RA,f_{y,r}R\rangle=\langle f_{y,r}A,f_{x,r}\rangle.
\end{equation}
For $r>0$ let $K_r:=\{f_{x,r}:x\in S^{d-1}\}$ and let $W_r$ denote the space of
finite linear combinations of elements in $K_r$. From equation (\ref{locsymm})
and the bilinearity of the scalar product, we obtain that $\langle
fA,g\rangle=\langle f,gA\rangle$ holds for any two functions $f,g\in W_r$.

Now let $f,g$ be arbitrary functions in $\HH_d$. It is easy to see that for
every $\epsilon>0$ there is an $r>0$ and two functions $f',g'\in W_r$ such that
$\|f-f'\|_2<\epsilon$, $\|g-g'\|_2<\epsilon$. Then
\[
|\langle f,gA\rangle-\langle f',g' A\rangle\leq |\langle f-f',gA\rangle|+|\langle f',(g-g')A\rangle|
< \epsilon\|g\|_2\|A\|_2+(\|f\|_2+\epsilon)\epsilon\|A\|_2
\]
and similarly $|\langle fA,g \rangle-\langle f'A,g'\rangle| <
\epsilon\|A\|_2(\|f\|_2+\|g\|_2+\epsilon)$. From $\langle f',g'A\rangle
=\langle f'A,g'\rangle$ and with $\epsilon\to 0$ we obtain that $\langle
f,gA\rangle =\langle fA,g\rangle$, showing that $A^*=A$.

To show the second claim, let $A,B$ be bounded rotation invariant operators.
Then $AB$ is also rotation invariant and so $AB=(AB)^*=B^*A^*=BA$ using the
first statement.
\end{proof}

Next we introduce a set of operators $\Mb_r$ on $\HH_d$ defined by
\[
(\Mb_r f)(x) = \frac1{V_r}\intl_{B_r(x)} f(y)\,d\pi(y).
\]
It is clear that $\Mb_r$ is rotation invariant, so by Lemma \ref{invcomm}
$\Mb_r$ and $\Ab_d$ commute and the product $\Cb_r:=\Ab_d\Mb_r$ is a
self-adjoint operator on $\HH_d$.

The operator $\Cb_r$ is a Hilbert--Schmidt operator with a nonnegative,
symmetric, bounded, measurable kernel $W_r:~S^{d-1}\times S^{d-1}\to\R$:
\[
(\Cb_r f)(x) = \intl_{S^{d-1}} W_r(x,y)f(y)\,d\pi(y).
\]
It is easy to see that for a fixed $x$, $W_r(x,y)$ is the density function of
the random point $y$ obtained by moving from $x$ in a random direction by
$\pi/2$ to a point $x'$, and then moving to a uniform random point of
$B_r(x')$.

Clearly $\int_{S^{d-1}} W_r(x,y)\,d\pi(y)=1$ for every $x$, so $W_r$ is a
$1$-regular graphon, and $t(H,W_r)$ is well-defined by \eqref{EQ:WG-DENS}. Our
main goal in this chapter is to prove that for a class of bipartite graphs $H$
we have
\begin{equation}
t(H,d)=\lim_{r\to 0} t(H,W_r)
\end{equation}
Our main tool is a rather explicit formula for the value of $t(H,W_r)$.

\begin{lemma}\label{applem1} For every $d\geq 3$ and $n\in\mathbb{N}$,
\begin{equation}\label{appeq1}
\intl_{S^{d-1}} \prod_{i=1}^n W_r(z,x_i)~d\pi(z)
=\frac{A_{d-1}^{n-1}A_{d-n-1}}{A_{d-2}^n}D_r(x_1,x_2,\dots,x_n).
\end{equation}
\end{lemma}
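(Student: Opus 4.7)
The proof is a direct computation that reduces the left-hand side to Lemma \ref{LEM:N-LOOSE}. The key is to unpack the probabilistic meaning of the kernel $W_r$ and then recognize the joint density of an equatorial sample.

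My plan is as follows. First, I would rewrite $W_r(z,x_i)$ probabilistically. Since $W_r$ is the kernel of $\Cb_r=\Ab_d\Mb_r$, starting from $z$ the two-step process is: pick $X'$ uniformly on $z^\perp\cap S^{d-1}$, then pick $y$ uniformly on $B_r(X')$, and $W_r(z,y)$ is the resulting density of $y$ at $y$. Conditioning on $X'$ and using the symmetry of the ball, $\one_{B_r(X')}(y)=\one_{B_r(y)}(X')$, so
\[
W_r(z,x_i)=\frac1{V_r}\,\Pr\bigl(X'\in B_r(x_i)\bigm|X'\text{ uniform on }z^\perp\cap S^{d-1}\bigr).
\]

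Second, I would use that conditional on $z$, we may take $n$ independent uniform samples $X'_1,\dots,X'_n$ on $z^\perp\cap S^{d-1}$ and write
\[
\prod_{i=1}^n W_r(z,x_i)= \frac1{V_r^n}\,\E\Bigl[\prod_{i=1}^n\one_{B_r(x_i)}(X'_i)\,\Bigm|\,z\Bigr].
\]
Integrating over $z$ uniform on $S^{d-1}$ then gives the joint expectation, under the measure on $(z,X'_1,\dots,X'_n)$ obtained by choosing $z$ uniformly and the $X'_i$'s iid on $z^\perp\cap S^{d-1}$. By Lemma \ref{LEM:N-LOOSE}, the marginal density of $(X'_1,\dots,X'_n)$ on $(S^{d-1})^n$ is exactly $s_{d,n}$. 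Hence
\[
\intl_{S^{d-1}}\prod_{i=1}^n W_r(z,x_i)\,d\pi(z)
= \frac1{V_r^n}\intl_{(S^{d-1})^n}\prod_{i=1}^n\one_{B_r(x_i)}(X'_i)\,s_{d,n}(X'_1,\dots,X'_n)\,d\pi^n(X').
\]

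Third, I would substitute the explicit formula $s_{d,n}=\tfrac{A_{d-1}^{n-1}A_{d-n-1}}{A_{d-2}^n}\cdot\tfrac1{\Det}$ from \eqref{EQ:SNSD}. The indicator restricts the integration to $\prod_i B_r(x_i)$; dividing by $V_r^n=\prod_i\pi(B_r(x_i))$ turns the integral into the average of $1/\Det(X'_1,\dots,X'_n)$ over the product of balls, which is exactly $D_r(x_1,\dots,x_n)$. This yields the claimed identity.

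There is no real obstacle here; the only potentially delicate point is justifying the manipulation of the ``equatorial'' conditional measure (a lower-dimensional distribution on $z^\perp\cap S^{d-1}$) when building up the joint law of $(z,X'_1,\dots,X'_n)$, but this is exactly the construction underlying Lemma \ref{LEM:N-LOOSE}, and the symmetry-based rewriting $\one_{B_r(X')}(y)=\one_{B_r(y)}(X')$ avoids having to work directly with surface measures on these equators.
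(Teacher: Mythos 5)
Your proposal is correct and takes essentially the same route as the paper: both rest on the two-step description of $W_r$ (equator step followed by uniform choice in an $r$-ball) and on Lemma \ref{LEM:N-LOOSE}/\eqref{EQ:SNSD} giving the joint density $s_{d,n}$ of the equatorial points, followed by averaging $1/\Det$ over the product of balls to produce $D_r$. The only difference is presentational: the paper identifies the left-hand side as the density of the $n$-step sample via \eqref{EQ:KSTEP-GRAPHON} and computes that density a second way, whereas you carry out the same computation directly with indicator functions and Fubini, using the symmetry $\one_{B_r(X')}(y)=\one_{B_r(y)}(X')$.
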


\begin{proof}
Let $z$ be a uniform random point on $S^{d-1}$ and let $z_1,z_2,\dots,z_n$ be
independent uniform elements on $S^{d-1}$ orthogonal to $z$. Let $x_i$ be
chosen uniformly from $B_r(z_i)$. By \eqref{EQ:KSTEP-GRAPHON}, the density
function of the joint distribution of $(x_1,\dots,x_n)$ is just the function on
the right hand side of \eqref{appeq1}. On the other hand, by \eqref{EQ:SNSD}
the joint distribution of $(z_1,\dots,z_n)$ has density function
\begin{equation}\label{appeq2}
\frac{A_{d-1}^{n-1}A_{d-n-1}}{A_{d-2}^n}D(z_1,x_2,\dots,z_n).
\end{equation}
Since $(x_1,\dots,x_n)$ is a random point in $B_r(z_1)\times\dots\times
B_r(z_n)$, the density function of $(x_1,\dots,x_n)$ is the average of
\eqref{appeq2} on $B_r(z_1)\times \dots\times B_r(z_n)$.
\end{proof}

\begin{lemma}\label{applem2}
Let $G=(V,E)$ be a $d$-sparse bipartite graph $(d\geq 3)$. Then
\begin{equation}\label{appeq3}
t(G,W_r)= \frac{A_{d-1}^{|E|-|W|}}{A_{d-2}^{|E|}}
\intl_{(S^{d-1})^U} \prod_{v\in W}A_{d-d(v)-1} D_r(x(N(v)))\,d\pi^U(x).
\end{equation}
\end{lemma}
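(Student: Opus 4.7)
The plan is to start from the definition of $t(G,W_r)$ as a graphon homomorphism density and exploit the bipartite structure of $G$ to reduce the integration to an application of Lemma \ref{applem1} at each vertex of $W$.

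First I would expand
\[
t(G,W_r)=\intl_{(S^{d-1})^V}\prod_{uv\in E}W_r(x_u,x_v)\,d\pi^V(x).
\]
Because $G$ is bipartite with parts $U$ and $W$, every edge has exactly one endpoint in each part, so the integrand factors as $\prod_{v\in W}\prod_{u\in N(v)}W_r(x_v,x_u)$. The variable $x_v$ for $v\in W$ appears only in the factor associated with $v$, so Fubini allows me to pull the integrals over $(x_v)_{v\in W}$ inside the product:
\[
t(G,W_r)=\intl_{(S^{d-1})^U}\prod_{v\in W}\Bigl(\intl_{S^{d-1}}\prod_{u\in N(v)}W_r(x_v,x_u)\,d\pi(x_v)\Bigr)d\pi^U(x).
\]

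Next I would apply Lemma \ref{applem1} with $n=d(v)=|N(v)|$ (permitted since $d$-sparsity gives $d(v)<d$) to each inner integral, yielding
\[
\intl_{S^{d-1}}\prod_{u\in N(v)}W_r(x_v,x_u)\,d\pi(x_v)=\frac{A_{d-1}^{d(v)-1}A_{d-d(v)-1}}{A_{d-2}^{d(v)}}\,D_r(x(N(v))).
\]
Substituting and collecting the constant factors using $\sum_{v\in W}d(v)=|E|$, so that $\prod_{v\in W}A_{d-1}^{d(v)-1}=A_{d-1}^{|E|-|W|}$ and $\prod_{v\in W}A_{d-2}^{d(v)}=A_{d-2}^{|E|}$, would produce exactly the claimed expression.

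The only thing to verify carefully is that the use of Fubini and the pointwise identity from Lemma \ref{applem1} is legitimate, i.e., that the integrand is nonnegative (which it is, as $W_r\ge 0$) and that the resulting right-hand side is well-defined (possibly infinite). Since everything is nonnegative the identity holds as an equality in $[0,\infty]$, so this is not really an obstacle; the entire proof is essentially a bookkeeping exercise on top of Lemma \ref{applem1}.
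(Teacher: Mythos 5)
Your proof is correct and follows essentially the same route as the paper: expand $t(G,W_r)$, use the bipartite structure and Fubini--Tonelli to integrate out the $W$-side variables, apply Lemma \ref{applem1} with $n=d(v)$ at each $v\in W$, and collect constants via $\sum_{v\in W}d(v)=|E|$. Your explicit bookkeeping of the constants and the nonnegativity remark are fine additions, but the argument is the same as the paper's.
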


\begin{proof}
Let $U\cup W$ be a bipartition of $V$, then using \eqref{appeq1},
\begin{align*}
t(G,W_r)& = \intl_{(S^{d-1})^V} \prod_{i\in W, j\in N(i)} W(x_i,x_j)\,d\pi^V(x) \\
&=\intl_{(S^{d-1})^U} \prod_{i\in W} \left( ~\intl_{S^{d-1}} \prod_{j\in N(i)}  W(x_i,x_j)\,d\pi(x_j)\right)\,d\pi^U\\
&= \intl_{(S^{d-1})^U} \prod_{i\in W} \left( \frac{A_{d-1}^{n-1}A_{d-n-1}}{A_{d-2}^n}D_r(x_1,x_2,\dots,x_n) \right)\,d\pi^U.
\end{align*}
Simplifying, we get \eqref{appeq3}.
\end{proof}

\begin{theorem}\label{appthm}
If $G$ is a bipartite graph that satisfies the sparsity condition, and
$t(G,d)<\infty$, then
\[
t(G,d)=\lim_{r\to 0} t(G,W_r).
\]
\end{theorem}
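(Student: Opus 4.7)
The plan is to reduce both $t(G,d)$ and $t(G,W_r)$ to integrals of exactly the same form, and then pass to the limit under the integral sign by dominated convergence, with Lemma \ref{LEM:DETAVERAGE} supplying precisely the pointwise-in-$r$ domination that is needed and the hypothesis $t(G,d)<\infty$ ensuring that the dominating function is integrable.

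First I would compare the two explicit formulas. Lemma \ref{applem2} expresses $t(G,W_r)$ as
\[
\frac{A_{d-1}^{|E|-|W|}}{A_{d-2}^{|E|}}\int_{(S^{d-1})^U}\prod_{v\in W} A_{d-d(v)-1}\, D_r(x(N(v)))\,d\pi^U(x),
\]
while equation \eqref{EQ:BIPART} gives the analogous formula for $t(G,d)$ with $D_r(x(N(v)))$ replaced by $1/\Det(x(N(v)))$. Thus everything reduces to showing that the integral over $(S^{d-1})^U$ of $\prod_{v\in W}D_r(x(N(v)))$ converges as $r\to 0$ to the integral of $\prod_{v\in W}1/\Det(x(N(v)))$.

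For pointwise convergence of the integrands, I would note that at any configuration $x\in(S^{d-1})^U$ such that each neighborhood tuple $x(N(v))$ consists of linearly independent vectors, the function $1/\Det$ is continuous in a neighborhood, so by definition of $D_r$ as the mean of $1/\Det$ over the shrinking product of balls $\prod_{i\in N(v)} B_r(x_i)$, we have $D_r(x(N(v)))\to 1/\Det(x(N(v)))$ as $r\to 0$. The set of configurations for which some $x(N(v))$ is linearly dependent is a $\pi^U$-null set (here $d(v)<d$ is needed, which follows from $d$-sparsity of $G$), so the integrand of $t(G,W_r)$ converges $\pi^U$-a.e.\ to that of $t(G,d)$.

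For domination, I would invoke Lemma \ref{LEM:DETAVERAGE}, which yields, uniformly in $r$ and $x$,
\[
D_r(x(N(v)))\le\frac{C_{d(v),d}}{\Det(x(N(v)))}.
\]
Taking the product over $v\in W$ bounds the integrand of $t(G,W_r)$ by the constant $\prod_{v\in W}C_{d(v),d}$ times the integrand of $t(G,d)$; the latter is integrable precisely because $t(G,d)<\infty$. Dominated convergence then gives $t(G,W_r)\to t(G,d)$ as $r\to 0$. The only substantive obstacle here is the uniform-in-$r$ control near the singular set $\{\Det=0\}$, where $1/\Det$ blows up; but this is exactly what Lemma \ref{LEM:DETAVERAGE} was designed to handle, and the rest of the argument is a clean assembly of the preceding pieces.
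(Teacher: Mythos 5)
Your proposal is correct and follows essentially the same route as the paper: reduce via Lemma \ref{applem2} and \eqref{EQ:BIPART} to the convergence of $\int\prod_{v\in W}D_r(x(N(v)))\,d\pi^U$, establish a.e.\ pointwise convergence of $D_r$ to $1/\Det$, dominate uniformly in $r$ by Lemma \ref{LEM:DETAVERAGE} times the integrand of $t(G,d)$ (integrable by the hypothesis $t(G,d)<\infty$), and conclude by dominated convergence. Your treatment of the a.e.\ convergence step is in fact slightly more explicit than the paper's, which simply asserts it.
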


\begin{proof}
According to Lemma \ref{applem2} and the formula \eqref{EQ:BIPART}, it is
enough to prove that
\begin{equation}\label{appeq4}
\lim_{r\to 0}~~ \int\limits_{(S^{d-1})^U}\prod_{v\in W}D_r(x(N(v)))\,d\pi^U(x)
=\int\limits_{(S^{d-1})^U}\prod_{v\in W}D(x(N(v)))\,d\pi^U(x).
\end{equation}
Let
\[
\wh{D}_r(x)=\prod_{v\in W}D_r(x(N(v)))~~~{\rm and}~~~\wh{D}(x)=\prod_{v\in W}D(x(N(v))).
\]
It is clear that $\wh{D}_r(x)\to\wh{D}(x)$ as $r\to 0$ for almost all $x\in
(S^{d-1})^U$. By Lemma \ref{LEM:DETAVERAGE} we have that $\wh{D}_r(x)\leq C_d
\wh{D}(x)$ for some $c>0$ independent from $r$ and $x$. Since $t(G,d)$ is
finite, the function $\wh{D}$ is integrable, and so $c\wh{D}$ is an integrable
upper bound on $\wh{D}_r$. Thus \eqref{appeq4} follows by Lebesgue's Dominated
Convergence Theorem.
\end{proof}

The next theorem is a corollary of Theorem \ref{appthm}.

\begin{theorem}\label{appthm2}
For every $d\geq 3$ there is a sequence of finite graphs $\{G_i\}_{i=1}^\infty$
such that if a finite bipartite graph $H$ satisfies the sparsity condition,
then
\[
\lim_{i\to\infty}  t(H,G_i)/t(e,G_i)^{|E(H)|}=t(H,d).
\]
\end{theorem}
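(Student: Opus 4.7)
The plan is to combine Theorem \ref{appthm} with the classical sampling lemma from graphon theory and then diagonalize both in $r$ and over an enumeration of admissible $H$.

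Step 1 (sampling each $W_r$). For fixed $r>0$, a direct computation (using that the kernel of $\Mb_r$ takes values in $[0,1/V_r]$) shows that $W_r$ is a bounded symmetric $1$-regular graphon, so $M_r:=\|W_r\|_\infty<\infty$. Set $\widetilde W_r:=W_r/M_r\in[0,1]$ and consider the $\widetilde W_r$-random graph $\mb{G}(n,\widetilde W_r)$: sample $x_1,\dots,x_n$ i.i.d.\ uniformly on $S^{d-1}$ and include each edge $\{i,j\}$ independently with probability $\widetilde W_r(x_i,x_j)$. The classical sampling lemma for $[0,1]$-valued graphons gives, almost surely and for every fixed finite simple $H$,
\[
t(H,\mb{G}(n,\widetilde W_r))\to t(H,\widetilde W_r)=\frac{t(H,W_r)}{M_r^{|E(H)|}},\qquad t(e,\mb{G}(n,\widetilde W_r))\to \frac{1}{M_r},
\]
the second limit using $1$-regularity of $W_r$. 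Combining,
\[
\frac{t(H,\mb{G}(n,\widetilde W_r))}{t(e,\mb{G}(n,\widetilde W_r))^{|E(H)|}}\longrightarrow t(H,W_r).
\]
Since finite graphs form a countable family, a Borel--Cantelli / union-bound argument will produce a single deterministic sequence $G_{r,1},G_{r,2},\dots$ along which this convergence holds simultaneously for every such $H$.

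Step 2 (diagonalization in $r$). Enumerate the finite $d$-sparse bipartite graphs as $H_1,H_2,\dots$. For each $k$ with $t(H_k,d)<\infty$, Theorem \ref{appthm} gives $t(H_k,W_{1/i})\to t(H_k,d)$ as $i\to\infty$; for the remaining $k$, pointwise convergence $\wh D_{1/i}\to\wh D$ combined with Fatou's lemma yields $t(H_k,W_{1/i})\to\infty$. I would then choose $G_i:=G_{1/i,\,j_i}$ with $j_i$ so large that for every $k\le i$,
\[
\left|\frac{t(H_k,G_i)}{t(e,G_i)^{|E(H_k)|}}-t(H_k,W_{1/i})\right|<\frac{1}{i}\quad\text{when }t(H_k,d)<\infty,
\]
and $t(H_k,G_i)/t(e,G_i)^{|E(H_k)|}>i$ otherwise. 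For any fixed $d$-sparse bipartite $H=H_{k_0}$, once $i\ge k_0$ the triangle inequality together with the $i\to\infty$ limit above yields $t(H,G_i)/t(e,G_i)^{|E(H)|}\to t(H,d)$, which is the conclusion.

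The hard part is largely bookkeeping: essentially all of the geometric content is already packaged in Theorem \ref{appthm}, and the remainder is the classical fact that every bounded graphon is a left-limit of finite graphs, with the normalization by $t(e,G_i)^{|E(H)|}$ absorbing the unknown rescaling factor $M_r$. The one genuine subtlety I anticipate is the treatment of bipartite $H$ with $t(H,d)=\infty$: Theorem \ref{appthm}'s dominated-convergence proof does not apply there, and one must instead argue via Fatou's lemma that $t(H,W_r)$ already diverges as $r\to 0$, so that the same diagonalization still drives the finite-graph densities to $\infty$.
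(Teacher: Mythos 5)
Your proposal is correct and follows essentially the same route as the paper's proof: normalize $W_{1/n}$ by its sup-norm to obtain a $[0,1]$-valued graphon, invoke the Lov\'asz--Szegedy result (the sampling lemma from \cite{LSzLimit}) to get finite graphs whose normalized densities approximate it, use $1$-regularity so that $t(H,U_n)/t(e,U_n)^{|E(H)|}=t(H,W_{1/n})$, and finish with Theorem \ref{appthm} via a diagonal argument. Your extra treatment of $t(H,d)=\infty$ via Fatou goes beyond what the paper writes (its proof only uses Theorem \ref{appthm}, hence the finite case); there, the requirement that the ratio exceed $i$ at stage $i$ may be unattainable since $t(H_k,W_{1/i})$ can still be small for fixed $i$, but demanding only the $1/i$-approximation for all $k\le i$ and then letting $t(H_k,W_{1/i})\to\infty$ gives the same conclusion.
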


\begin{proof}
For $n\in\mathbb{N}$ let $U_n:=W_{d,1/n}/\|W_{d,1/n}\|_\infty$, then $U_n$ is a
symmetric measurable function with values in $[0,1]$. It follows from the
results in \cite{LSzLimit} that there is a finite graph $G_n$ such that
\[
\Bigl|t(H,G_n)/t(e,G_n)^{|E(H)|}-t(H,U_n)/t(e,U_n)^{|E(H)|}\Bigr|\leq 1/n.
\]
Since $t(e,W_{d,1/n})=1$, we also have that
$t(H,U_n)/t(e,U_n)^{|E(H)|}=t(H,W_{d,1/n})$. Together with Theorem
\ref{appthm}, this completes the proof.
\end{proof}

Theorem \ref{appthm2} shows that in some sense $H_d$ is a limit of finite
graphs. It is interesting to mention that the sequence $\{G_i\}_{i=1}^\infty$
given by the proof of the theorem is a sparse graph sequence. We also have an
interesting corollary of Theorem \ref{appthm2}.

\begin{corollary}\label{appthmcor}
If $H$ is a $d$-sparse bipartite graph that satisfies Sidorenko's conjecture,
then $t(H,d)\geq 1$.
\end{corollary}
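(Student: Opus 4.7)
The plan is to pass the finite-graph version of Sidorenko's inequality to the limit using the approximation theorem already established.

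First, if $t(H,d)=\infty$ there is nothing to prove, so I would assume $t(H,d)<\infty$, which is the case in which Theorem \ref{appthm} (and hence Theorem \ref{appthm2}) gives a genuine convergence statement. Under this assumption, Theorem \ref{appthm2} supplies a sequence $\{G_i\}_{i=1}^\infty$ of finite graphs, depending only on $d$, such that
\[
\lim_{i\to\infty}\frac{t(H,G_i)}{t(e,G_i)^{|E(H)|}}=t(H,d).
\]
Since $H$ is assumed to satisfy Sidorenko's conjecture in its finite-graph form, each term in this sequence is at least $1$: indeed, Sidorenko's conjecture asserts precisely that $t(H,G)\ge t(e,G)^{|E(H)|}$ for every finite (nonempty) graph $G$. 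Passing to the limit gives $t(H,d)\ge 1$.

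The argument is essentially a one-line reduction once Theorem \ref{appthm2} is in hand, so there is no real obstacle; the only thing to be a little careful about is that the ratio $t(H,G_i)/t(e,G_i)^{|E(H)|}$ is well-defined for $i$ large (this follows because the approximating graphons $W_r$ are $1$-regular, whence $t(e,W_r)=1$, so $t(e,G_i)$ is bounded away from $0$ in the proof of Theorem \ref{appthm2}). One could alternatively invoke Sidorenko's conjecture in its graphon formulation $t(H,W_r)\ge t(e,W_r)^{|E(H)|}=1$ and then appeal directly to Theorem \ref{appthm}, bypassing the discretization step entirely; either route works.
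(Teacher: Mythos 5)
Your argument is correct and matches the paper's (implicit) reasoning: the corollary is exactly the observation that each term $t(H,G_i)/t(e,G_i)^{|E(H)|}$ in Theorem \ref{appthm2} is at least $1$ by the finite-graph form of Sidorenko's conjecture, so the limit $t(H,d)$ is at least $1$, with the infinite case being trivial. Your alternative route via $t(H,W_r)\ge t(e,W_r)^{|E(H)|}=1$ and Theorem \ref{appthm} is also fine and is essentially the same argument one step earlier in the approximation chain.
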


Sidorenko's conjecture is verified for large families of bipartite graphs, and
thus Corollary \ref{appthmcor} implies several non-trivial inequalities for
ortho-homomorphism densities. Some other graph theoretic inequalities can also
be transported to ortho-homomorphism densities with the help of Theorem
\ref{appthm2}; but we omit the details here.

\section{Open problems}

Let us conclude with some special and more general problems left open by our
work.

\begin{problem}
Decide the finiteness of $t(\text{\rm Cr}_n,d)$ the open cases $(d,n)\in
\{(4,5), (4,6), (5,4)\}$ in Proposition \ref{PROP:3PATH}.
\end{problem}

\begin{problem}
Characterize graphs $G$ and dimensions $d$ for which $t(G,d)$ is finite. As an
interesting example: if $G$ is the incidence graph of the Fano plane, is
$t(G,4)$ finite?
\end{problem}

\begin{problem}
The fact that the cube graph $\text{\rm Cr}_4$ is, in a sense, exceptional
among crowns, may be related to the fact that for $4$-sparse graphs, the real
algebraic variety of all ortho-homomorphisms in dimension $4$ is irreducible,
except for the cube. Is there a more substantial connection?
\end{problem}

\begin{problem}
Make sense of the identity \eqref{EQ:C6-ID}, perhaps generalized to all cycles
and all dimensions.
\end{problem}

\begin{problem}
Let $G$ be a $d$-sparse graph, and let $\mu$ be a probability measure on
$\Sigma_{G,d}$ with Markovian conditioning. Is $\mu$ uniquely determined by
$G$?
\end{problem}

\begin{problem}
Are there natural graph sequences converging to the orthogonality graph? The
orthogonality graph $H_{p,d}$ of $\Fbb_p^d\setminus\{0\}$ (more exactly, the
conjugacy graph in the projective space $\Pbb_p^{d-1}$) is a natural example,
but it does not work: cf.~\cite{BackSz}, Section 12.5, from which it follows
that conjugacy graphs of finite projective spaces tend to a trivial limit in
the sense of action convergence (a form of right convergence). From the other
side, it is easy to compute that $t^*(K_3,H_{p,3})=(p^2+p+1)/(p+1)^2\sim 1$,
while we have seen that $t(K_3,H_3)=2/\pi$, showing that $H_{p,3}$ does not
tend to $H_3$ in the local sense either.
\end{problem}

\begin{problem}
Instead of random unit vectors, we could consider other probability
distributions; Gaussian would be a natural choice. In the sequentially
constructed random map, we map each node $v$ onto a random vector from the
standard Gaussian distribution on the subspace orthogonal to the previously
chosen images of neighbors of $v$. We expect that a density function making
this mapping independent of the order of the nodes can be constructed along the
same lines as in this paper. This construction may have even nicer properties
than our random ortho-homomorphism; but this is not discussed in this paper.

As another natural generalization, we could determine subgraph densities in the
uniform measure on pairs of points of a unit sphere at any given distance
(different from $\pi/2$). Even more generally, perhaps the methods above can be
applied to any probability measure on pairs of points in $\R^d$ invariant under
the orthogonal group.
\end{problem}

\begin{problem}
Based on \eqref{EQ:TBN} and \eqref{EQ:AFX}, one can (formally) derive the
following formula:
\begin{align}\label{TGA}
t(G,d) = \sum_{\tau:\,E(H)\to\Nbb} \intl_{(S^{d-1})^V}
\prod_{ij\in E} \lambda_{\tau(ij)} f_{\tau(ij)}(x_i\cdot x_j)\,d\pi^V(x).
\end{align}
Note that the product in the formula is a multivariate polynomial on $\R^{dn}$
with rational coefficients which depends on the edge labeling $\tau$. It is not
clear when this infinite sum converges and when the equality holds.
\end{problem}

\def\RSA{{\it Random Struc.\ Alg.} }
\def\CCA{{\it Combinatorica} }
\def\JCTB{{\it J.~Combin.\ Theory B} }
\def\JCTA{{\it J.~Combin.\ Theory A} }
\def\CPC{{\it Combin.\ Prob.\ Comput.} }
\def\EJC{{\it Europ.\ J.~Combin.} }
\def\ELJC{{\it Electr.\ J.~Combin.} }
\def\GC{{\it Graphs and Combin.} }
\def\JGT{{\it J.~Graph Theory} }
\def\ADV{{\it Advances in Math.} }
\def\ADVA{{\it Advances in Applied Math.} }
\def\AMH{{\it Acta Math. Hung.} }
\def\GAFA{{\it Geom.\ Func.\ Anal.} }
\def\STOC#1 {{\it Proc.\ #1$^\text{th}$ ACM Symp.\
on Theory of Comput.} }
\def\FOCS#1 {{\it Proc.\ #1$^\text{th}$ Ann.\ IEEE
Symp.\ on Found.\ Comp.\ Science} }
\def\SODA#1 {{\it Proc.\ #1$^\text{th}$ Ann.\
ACM-SIAM Symp.\ on Discrete Algorithms} }
\def\DCG{{\it Discr.\ Comput.\ Geom.} }
\def\DM{{\it Discr.\ Math.} }
\def\DAM{{\it Discr.\ Applied Math.} }
\def\SJC{{\it SIAM J.~Comput.} }
\def\SDM{{\it SIAM J.~Discr.\ Math.} }
\def\TIT{{\it IEEE Trans.\ Inform.\ Theory} }
\def\LAA{{\it Linear Algebra Appl.} }


\begin{thebibliography}{99}

\bibitem{Alfa}
A.Y.~Alfakih: Graph connectivity and universal rigidity of bar frameworks, \DAM
{\bf 217} (2017), 707--710.

\bibitem{BackSz}
\'A.~Backhausz, B.~Szegedy: Action convergence of operators and graphs, \\
https://arxiv.org/abs/1811.00626

\bibitem{BCCZ}
C.~Borgs, J.~Chayes, H.~Cohn and Y.~Zhao: An $L^p$ theory of sparse graph
convergence I: Limits, sparse random graph models, and power law distributions,
{\it Trans.\ Amer.\ Math.\ Soc.} {\bf372} (2019), 3019--3062.

\bibitem{CSWquant}
A.~Cabello, S.~Severini, A.~Winter: Graph-Theoretic Approach to Quantum
Correlations, {\it Phys.\ Rev.\ Lett.} {\bf112} (2014), 040401.

\bibitem{Fin}
H.~Finner: A generalization of H\"older's inequality and some probability
inequalities, {\it Ann.\ Probab.} {\bf20} (1992), 1893--1901.

\bibitem{Geg}
L.~Gegenbauer: \"Uber einige bestimmte Integrale, {\it Sitzungsber.\ Kaiser.\
Akademie der Wissenschaften. Math.-Naturwiss.} {\bf 70} (1874), 433--443.

\bibitem{GLS1}
M.~Gr\"otschel, L.~Lov\'asz and A.~Schrijver: The ellipsoid method and its
consequences in combinatorial optimization, \CCA {\bf 1} (1981), 169-197.

\bibitem{GLSBook}
M.~Gr\"otschel, L.~Lov\'asz, A.~Schrijver: Geometric Algorithms and
Combinatorial Optimization, Springer (1988).

\bibitem{KLSz}
D.~Kunszenti-Kov\'acs, L.~Lov\'asz, B.~Szegedy: Measures on the
square as sparse graph limits, \JCTB (2019), online:\\
https://www.sciencedirect.com/science/article/pii/S009589561930005X

\bibitem{LL79}
L.~Lov\'asz: On the Shannon capacity of graphs, \TIT {\bf 25} (1979), 1--7.

\bibitem{HomBook}
L.~Lov\'asz: {\it Large networks and graph limits}, Amer.\ Math.\ Soc.,
Providence, RI (2012).

\bibitem{GeomBook}
L.~Lov\'asz: {\it Graphs and Geometry}, Amer.\ Math.\ Soc.\ Coll.\ Publ.\
{\bf65}, Providence, R.I. (2019).

\bibitem{LSS}
L.~Lov\'asz, M.~Saks and A.~Schrijver: : Orthogonal representations and
connectivity of graphs, \LAA {\bf 114/115} (1989), 439--454. A correction: \LAA
{\bf 313} (2000), 101--105.

\bibitem{LSzLimit}
L.~Lov\'asz, B.~Szegedy: Limits of dense graph sequences, \JCTB {\bf 96}
(2006), 933--957.

\end{thebibliography}
\end{document}